\tikzset{node distance=2em, ch/.style={circle,draw,on chain,inner sep=2pt},chj/.style={ch,join},every path/.style={shorten >=4pt,shorten <=4pt},line width=1pt,baseline=-1ex}
\newtheorem{thm}{Theorem}
\newtheorem{lem}[thm]{Lemma}
\newtheorem{prop}[thm]{Proposition}
\newtheorem{conj}[thm]{Conjecture}
\newtheorem{cor}[thm]{Corollary}
\newtheorem{defe}[thm]{Definition}
\theoremstyle{remark}
\newtheorem{rem}[thm]{Remark}
\newcommand\myurl[1]{\url{#1}}
\newcommand{\nc}{\newcommand}
\nc{\ssec}{\subsection}
\nc{\on}{\operatorname}
\nc{\Ai}{\mathrm{Ai}}
\nc{\sE}{\mathscr{E}}
\nc{\sF}{\mathscr{F}}
\nc{\sL}{\mathscr{L}}
\nc{\sD}{\mathscr{D}}
\nc{\sA}{\mathscr{A}}
\nc{\cC}{\mathcal{C}}
\nc{\cG}{\mathcal{G}}
\nc{\cV}{\mathcal{V}}
\nc{\cK}{{k(\!(s)\!)}}
\nc{\cE} {\mathcal{E}}
\nc{\cI}{\mathcal{I}}
\nc{\cO}{\mathcal{O}}
\nc{\cF}{\mathcal{F}}
\nc{\cZ}{\mathcal{Z}}
\nc{\cD}{\mathcal{D}}
\nc{\cDt}{\mathcal{D}^\times}
\nc{\cH}{\mathcal{H}}
\nc{\bG}{\mathbb{G}}
\nc{\bZ}{\mathbb{Z}}
\nc{\bQ}{\mathbb{Q}}
\nc{\bR}{\mathbb{R}}
\nc{\bC}{\mathbb{C}}
\nc{\bQl}{\overline{\mathbb{Q}}_\ell}
\nc{\bQlt}{\bQl^\times} 
\nc{\FG}{\mathrm{FG}}
\nc{\dR}{\mathrm{dR}}
\nc{\uG}{\underline{G}}
\nc{\cB}{\mathcal{B}}
\nc{\cU}{\mathcal{U}}
\nc{\rat}{\mathrm{rat}}
\nc{\Hyp}{\mathscr{Hyp}}
\nc{\Lie}{\mathrm{Lie}}
\nc{\fF}{\mathfrak{F}}
\nc{\fB}{\mathfrak{B}}
\nc{\fZ}{\mathfrak{Z}}
\nc{\fx}{\mathfrak{x}}
\nc{\fy}{\mathfrak{y}}
\nc{\fb}{\mathfrak{b}}
\nc{\fk}{\mathfrak{k}}
\nc{\fI}{\mathfrak{i}}
\nc{\fj}{\mathfrak{j}}
\nc{\fg}{\mathfrak{g}}
\nc{\fu}{\mathfrak{u}}
\nc{\fl}{\mathfrak{l}}
\nc{\fn}{\mathfrak{n}}
\nc{\cP}{\mathcal{P}}
\nc{\ft}{\mathfrak{t}}
\nc{\fz}{\mathfrak{z}}
\nc{\fc}{\mathfrak{c}}
\nc{\fh}{\mathfrak{h}}
\nc{\fp}{\mathfrak{p}}
\nc{\bone}{\mathbf{1}}
\nc{\tg}{\mathtt{g}}
\nc{\hfg}{\widehat{\fg}}
\nc{\hG}{\widehat{G}}
\nc{\hg}{\hat{\mathfrak{g}}}
\nc{\Ug}{\hat{U}(\mathfrak{g})}
\nc{\bGm}{\mathbb{G}_m}
\nc{\bGa}{\mathbb{G}_a}
\nc{\bL}{\mathbf{L}}
\nc{\bF}{\mathbb{F}}
\nc{\bK}{\mathbf{K}}
\nc{\bJ}{\mathbf{J}}
\nc{\bI}{\mathbf{I}}
\nc{\bV}{\mathbb{V}}
\nc{\bP}{\mathbb{P}}
\nc{\bA}{\mathbb{A}}
\nc{\bN}{\mathbb{N}}
\nc {\Q}{\mathrm{Q}}
\nc{\diag}{\mathrm{diag}}
\nc{\ev}{\mathrm{ev}}
\nc{\Res}{\mathrm{Res}}
\nc{\Fl}{\mathcal{F}\ell}
\nc{\Ad}{\mathrm{Ad}}
\nc{\ad}{\mathrm{ad}}
\nc{\pr}{\mathrm{pr}}
\nc{\Sl}{\mathfrak{sl}}
\nc{\gl}{\mathfrak{gl}}
\nc{\ra}{\rightarrow}
\nc{\tra}{\twoheadrightarrow}
\nc{\hra}{\hookrightarrow}
\nc{\quo}{\mathopen{ /\!/}}
\nc{\GL}{\mathrm{GL}}
\nc{\SL}{\mathrm{SL}}
\nc{\Sp}{\mathrm{Sp}}
\nc{\SO}{\mathrm{SO}}
\nc{\PGL}{\mathrm{PGL}}
\nc{\Bun}{\mathrm{Bun}}
\nc{\supp}{\mathrm{supp}}
\nc{\bgamma}{\bar{\gamma}}
\nc{\I}{\mathrm{I}}
\nc{\II}{\mathrm{II}}
\nc{\III}{\mathrm{III}}
\nc{\ab}{\mathrm{ab}}
\nc{\td}{\mathrm{d}}
\nc{\Ht}{\mathrm{ht}}
\nc{\der}{\mathrm{der}}
\nc{\St}{\mathrm{St}}
\nc         {\rar}[1]       {\stackrel{#1}{\longrightarrow}}
\nc{\fa}{\mathfrak{a}}
\nc{\Hitch}{\mathrm{Hitch}}
\nc{\RS}{\mathrm{RS}}
\nc{\tp}{\mathfrak{p}}
\nc{\cA}{\mathcal{A}}
\nc{\cN}{\mathcal{N}}
\nc{\cW}{\mathcal{W}}
\nc{\opp}{\mathrm{opp}}
\nc{\Ind}{\mathrm{Ind}}
\nc{\sAn}{\mathrm{can}}
\nc{\Vac}{\mathrm{Vac}}
\nc{\Op}{\mathrm{Op}}
\nc{\Lg}{\check{\fg}}
\nc{\LV}{\check{V}}
\nc{\Lh}{\check{h}}
\nc{\LG}{\check{G}}
\nc{\Spec}{\mathrm{Spec}}
\nc{\End}{\mathrm{End}}
\nc{\rX}{\mathring{X}}
\nc{\ru}{\mathring{u}}
\nc{\sW}{\mathscr{W}}
\nc{\sH}{\mathscr{H}}
\nc{\sV}{\mathscr{V}}
\nc{\geom}{\mathrm{geom}}
\nc{\Irr}{\mathrm{Irr}}
\nc{\fm}{\mathfrak{m}}
\nc{\aff}{\mathrm{aff}}
\nc{\Aut}{\mathrm{Aut}}
\nc{\cJ}{\mathcal{J}}
\nc{\fs}{\mathfrak{s}}
\nc{\Stab}{\mathrm{Stab}}
\nc{\tw}{{\widetilde{w}}}
\nc{\gen}{\mathrm{gen}}
\nc{\genn}{\mathrm{genn}}
\nc{\sss}{\mathrm{ss}}
\nc{\spp}{\mathrm{sp}}
\nc{\Hom}{\mathrm{Hom}}
\nc{\bm}{\mathbf{m}}
\nc{\HG}{\mathcal{HG}}
\nc{\Gal}{\mathrm{Gal}}
\nc{\tP}{\mathtt{P}}
\nc{\tL}{\mathtt{L}}
\nc{\tU}{\mathtt{U}}
\nc{\tW}{\widetilde{W}}
\nc{\Hk}{\on{Hk}}
\nc{\cL}{\mathcal{L}}
\nc{\talpha}{\widetilde{\alpha}}
\nc{\tQ}{{\widetilde{Q}}}
\nc{\ochi}{\overline{\chi}}
\nc{\tdelta}{\widetilde{\Delta}}
\nc{\wt}{\mathrm{wt}}
\nc{\Id}{\mathrm{Id}}
\nc{\id}{\mathrm{id}}
\nc{\fQ}{\mathfrak{Q}}
\nc{\Rep}{\mathrm{Rep}}
\nc{\Hecke}{\mathrm{Hecke}}
\nc{\Gr}{\mathrm{Gr}}
\nc{\GR}{\mathrm{GR}}
\nc{\IC}{\mathrm{IC}}
\nc{\Std}{\mathrm{Std}} 
\nc{\Db}{\mathrm{D}^{\mathrm{b}}}
\nc{\tr}{\mathrm{tr}}
\nc{\Hit}{\mathrm{Hit}}
\nc{\gr}{\mathrm{gr}}
\nc{\frJ}{\mathfrak{J}}
\nc{\Perv}{\mathrm{Perv}}
\nc{\hl}{\overleftarrow{h}}
\nc{\hr}{\overrightarrow{h}}
\nc{\bfT}{\mathbf{T}}
\nc{\upH}{\mathrm{H}}
\nc{\BunJ}{\mathrm{Bun}_{J}}
\nc{\Bunpl}{\mathrm{Bun}_{J^{+}}}
\nc{\AS}{\mathrm{AS}}
\nc{\lan}{\langle}
\nc{\ran}{\rangle}
\begin{document} 
\renewcommand{\thepart}{\Roman{p|art}}

\renewcommand{\partname}{\hspace*{20mm} Part}

\subjclass[2020]{14D24, 20G25, 22E50, 22E67}
\keywords{Airy differential equation,  local systems, rigid automorphic data, Hecke eigensheaves, geometric Langlands}
\address{Department of Mathematics, Massachusetts Institute of Technology} 
\email{kjakob@mit.edu} 

\address{School of Mathematics and Physics, The University of Queensland} 
\email{masoud@uq.edu.au}

\address{Department of Mathematics, University of Minnesota} 
\email{yilingfei12@gmail.com}

\title{Airy sheaves for reductive groups} 
\author{Konstantin Jakob, Masoud Kamgarpour and Lingfei Yi}
\date{\today} 
\maketitle

\begin{abstract} We construct a class of $\ell$-adic local systems on $\bA^1$ that generalizes the Airy sheaves defined  by N. Katz to reductive groups. These sheaves are finite field analogues of generalizations of the classical Airy equation $y''(z)=zy(z)$. We employ the geometric Langlands correspondence to construct the sought-after local systems as eigenvalues of certain rigid Hecke eigensheaves, following the methods developed by Heinloth, Ngô and Yun. The construction is motivated by a special case of Adler and Yu’s construction of tame supercuspidal representations. The representations that we consider can be viewed as deeper analogues of simple supercuspidals. For $\GL_n$, we compute the Frobenius trace of the local systems in question and show that they agree with Katz's Airy sheaves. We make precise conjectures about the ramification behaviour of the local systems at $\infty$. These conjectures in particular imply cohomological rigidity of Airy sheaves.
\end{abstract}

\tableofcontents

\section{Introduction}
\subsection{Background} 
The classical Airy equation  
\begin{equation}\label{eq:AiryEquation} 
y''(z)=zy(z)
\end{equation} 
is one of the simplest non-trivial complex ordinary differential equations with an irregular singular point at $z=\infty$. Its local behaviour around that point is well-studied and exhibits the Stokes phenomenon, cf. \cite{Stokes, BoalchTwisted, BoalchStokesTop}. The Airy equation defines a connection on the trivial vector bundle over $\bA^1$ of rank two and this connection is rigid
in the sense that up to gauge equivalence it is determined by the formal structure at $\infty$ \cite{KatzRigid, BlochEsnault}.  Rigidity of the Airy equation follows for example from the fact that it is the Fourier transform of a rank one differential equation of exponential type.  Such a realisation allowed Katz to generalize this type of equation and define  analogous $\ell$-adic local systems \cite{Katz87, KatzGroups}.

\subsubsection{Katz's $\ell$-adic Airy sheaves} 
Let $k=\bF_q$ be a finite field,  $\psi : k \to \bQlt$  a non-trivial character, and $\cL_{\psi}$ the associated Artin--Schreier sheaf. Let $f(z)\in k[z]$ be a polynomial of degree $n+1$ and  $\cL_{\psi}(f)$  the pull-back of $\cL_{\psi}$ along the map $\bA^{1}_{k}\rightarrow \bA^{1}_{k}$ defined by $f$. 

\begin{defe}  \label{d:AirySheaf} 
The Fourier transform $\cF_f(\cL_{\psi}(f))$ is called the \emph{rank $n$ Airy sheaf}.  Replacing $k$ with $\mathbb{C}$ and $\cL_\psi$ with the exponential sheaf, we obtain the \emph{rank $n$ Airy connection}.\footnote{When $n=2$, we recover the connection corresponding to the classical Airy equation (\ref{eq:AiryEquation}).}
\end{defe} 

The Airy local systems have been studied extensively by Katz who proved, in particular, that they are wildly ramified at $\infty$ of slope  $(n+1)/n$ and rigid. The study of the global monodromy group of these sheaves has been a subject of active research by Katz and collaborators, cf.  \cite{Katz87, KatzGroups, KatzFinite}.

\subsubsection{Airy $\hG$-connections} 
Now let $\hG$ be a simple complex algebraic group. The following connections were introduced in \cite{KS21}: 
\begin{defe}  The \emph{Airy $\hG$-connection} is the connection on the trivial bundle on $\mathbb{A}^1$ defined by 
\begin{equation}\label{eq:AiryConnection}  
\nabla := d-(N^{-}+zE_{\theta} )dz.
\end{equation} 
\end{defe} 
Here, $N^{-}$ is a principal nilpotent element in $\widehat{\fg} = \Lie(\hG)$ and $E_{\theta}$ is a generator of the highest root space such that $N^{-}+E_{\theta}$ is semisimple. We can think of the above connection as a ``deeper" version of the Frenkel--Gross connection \cite{FrenkelGross}; indeed, on $\bP^1\setminus\{0,\infty\}$ we may change the coordinate to $t=z^{-1}$. Then the Airy connection takes the form 
\begin{equation}
\nabla = d+(N^{-}+t^{-1}E_{\theta} )\frac{dt}{t^2}.
\end{equation}
Changing $t^2$ in the denominator to $t$, we obtain the Frenkel--Gross connection. It was proved in \cite{KS21} that Airy $G$-connections are rigid and have an irregular singularity at $\infty$ with slope $\frac{h+1}{h}$, where $h$ is the Coxeter number of $\hG$. When $\hG=\SL_n$, we recover Katz's rank $n$ Airy connection.

\subsection{Overview of our main result} The goal of this article is to use the geometric Langlands correspondence to find the $\ell$-adic counterpart of Airy $\hG$-connections. Equivalently, our goal is to construct reductive analogues of Katz's $\ell$-adic Airy sheaf. To realise our goal, we use the remarkable ideas of Heinloth, Ngô, and Yun \cite{HNY13}, later axiomatized by Z. Yun in the framework of ``rigid automorphic data" \cite{Yun14}.

Let $G$ be a reductive group over $k$ that is geometrically almost simple with Langlands dual $\hG$, $K$ the function field of $\bP^{1}$, $F$ the completion of the local ring of $\bP^1$ at $\infty$ and $\bA_{K}$ the ring of adèles. Roughly speaking in our situation a rigid automorphic datum is a pair $(J,\chi)$ where $J\subset G(F)$ is a subgroup of finite codimension in a parahoric subgroup and $\chi$ is a character on $K$ factoring through a finite-dimensional (over $k$) quotient $J/J^{+}$ such that there is a small but non-zero number of irreducible cuspidal automorphic representations $\pi$ of $G(\bA_{K})$ such that $\pi_{\infty}$ has an eigenvector under $J$ on which $J$ acts through the given character $\chi$.  

The choice of $J$ is informed by the explicit local Langlands correspondence in the tame situation. In particular the construction of certain supercuspidal representations by Adler \cite{Ad98} and Yu \cite{Yu01}  informed our choice of level group. In more details,  they construct supercuspidal representations in an explicit way as compact inductions from certain compact open subgroups of $G(F)$. These compact open subgroups are essentially the level group we are looking for. In \S \ref{sss:localAiry} we give a more detailed explanation of our motivation for the choice of the level group.

Let $\Bun_{J^+}$ be the moduli stack of $G$-bundles with $J^+$-level structure at $\infty$. As discussed  in \S \ref{ss:components}, we have a canonical bijection between $\pi_0(\Bun_{J^+})$ and the set of minuscule coweights $M\subset X_*(T)$.  Our main result can be formulated as follows: 

\begin{thm} 
\begin{enumerate} 
\item[(i)] For each $\mu\in M$, the connected component $\Bun_{J^+}^\mu$ supports a unique non-zero $(J,\chi)$-equivariant perverse sheaf $\cA_\mu$. 
\item[(ii)] The perverse sheaf $\displaystyle \cA= \bigsqcup_{\mu \in M} \cA_\mu$ can be endowed with a structure of a Hecke eigensheaf.
\item[(iii)] The corresponding Hecke eigenvalue $\mathrm{Ai}_{\hG}$ satisfies the property that $\mathrm{Ai}_{\hG,V}$ is a semisimple local system for each $V$ in $\Rep(\hG)$.
\end{enumerate} 
\end{thm}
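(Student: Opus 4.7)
The plan is to apply the framework of rigid automorphic data developed in \cite{HNY13} and axiomatized in \cite{Yun14}: once one verifies that the pair $(J,\chi)$ is rigid in the technical sense, all three parts follow from the general machinery, with the real content concentrated in the orbit analysis underlying part (i).

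For part (i), I would stratify $\Bun_{J^+}^{\mu}$ by $J$-orbits at $\infty$. Using the uniformization $\Bun_{J^+}(k) \simeq G(K)\backslash G(\bA_K)/J^+\cdot \prod_{v\neq\infty} G(\cO_v)$ together with a Birkhoff-type decomposition at $\infty$, each component acquires a stratification indexed by double cosets. The plan is to identify a unique stratum on which $\chi$ descends to a non-zero character sheaf, and to verify that on every other stratum the stabilizer of some (equivalently any) closed point contains an element on which $\chi$ restricts non-trivially; any such element annihilates a putative $(J,\chi)$-equivariant object. Clean extension of $\cL_\chi$ from the distinguished open stratum then yields $\cA_\mu$, and uniqueness up to scalar is automatic. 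The real obstacle sits here: since $J$ is strictly deeper than the parahoric level of \cite{HNY13}, the stratification has to be analyzed via the Moy--Prasad grading together with the combinatorics of affine Schubert cells, and the rigidity check reduces to an explicit computation of stabilizers inside $J/J^+$, informed by the Adler--Yu construction referenced in the introduction.

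For part (ii), I would invoke the Hecke correspondence
\[
\Bun_{J^+} \xleftarrow{\hl} \Hk_V \xrightarrow{\hr} \Bun_{J^+} \times \bA^1,
\]
where all modifications take place away from $\infty$, so the $(J,\chi)$-equivariance at $\infty$ is automatically preserved. The convolution $\hr_!(\hl^*\cA \otimes \IC_V)$ is then $(J,\chi)$-equivariant along the first factor; by the uniqueness established in part (i), it must take the form $\cA \boxtimes \Ai_{\hG,V}$ for some complex $\Ai_{\hG,V}$ on $\bA^1$, up to the expected cohomological shift. Compatibility with the tensor structure on $\Rep(\hG)$, inherited from geometric Satake, upgrades the assignment $V \mapsto \Ai_{\hG,V}$ to a symmetric monoidal functor $\Rep(\hG) \to \mathrm{Loc}(\bA^1)$, which by Tannakian reconstruction is a $\hG$-local system $\Ai_{\hG}$.

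For part (iii), I would argue by purity and the decomposition theorem. The clean extension produced in part (i) exhibits $\cA$ as a pure perverse sheaf, and $\hr$ is proper over the locus supporting $\cA$ since the relevant affine Schubert varieties are proper. Deligne's theorem then implies that $\hr_!(\hl^*\cA \otimes \IC_V)$ is pure, and the decomposition theorem forces its perverse cohomology sheaves to be semisimple; transporting this along the identification from part (ii) gives semisimplicity of each $\Ai_{\hG,V}$. I expect the genuine difficulty of the entire program to lie in part (i), specifically in the Lie-theoretic verification that $\chi$ fails to extend to a stabilizer outside the unique open stratum; the remaining steps are formal once this input is in hand.
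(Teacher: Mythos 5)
Your overall strategy (Birkhoff decomposition, relevance analysis of stabilizers, clean extension, Hecke operators plus Satake and Tannakian reconstruction) is indeed the skeleton of the paper's argument, but there is a concrete error that propagates through parts (ii) and (iii): you assert that the relevant stratum is \emph{open} and build the clean extension and the eigensheaf argument on that. In the present situation this is false, and it is precisely the point where this paper diverges from \cite{HNY13}. The relevant point $L^-Gt^\mu J$ sits inside the open locus $L^-G\backslash L^-Gt^\mu I(1)/J$, but by the finer decomposition $I(1)=(\Ad_{t^{-\mu}}U_\mu)A^\mu J$ this open locus is in general strictly larger than the relevant orbit (the scheme $A^\mu=\exp(t\Ad_{t^{-\mu}}\fa^\mu)$ is positive-dimensional as soon as $h>2$). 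What the paper proves instead (Proposition \ref{p:geom of relevant orbits}) is that the relevant orbit is \emph{closed} inside this affine open locus, hence locally closed and affine. Note also that cleanness of the extension does not need to be checked separately: by the rigidity theorem every $(J,\chi)$-equivariant complex has vanishing stalks and costalks on all irrelevant orbits, so restriction to the relevant orbit is already an equivalence and uniqueness in (i) follows from connectedness of the (unipotent) stabilizer, not from openness.

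This geometric point is not cosmetic: the affineness of the relevant orbit (closedness in the generic affine locus) is exactly what replaces the ``open orbit'' argument of \cite{HNY13} in proving that $\bfT_V(-)[1]$ is perverse $t$-exact, because the composite of the preimage of the generic locus in the Hecke stack with the ind-proper projection $\pi\times\hr$ must be shown to be ind-affine. Your proposal never addresses this exactness, and your substitute for part (iii) — purity of $\cA$ plus the decomposition theorem applied to $\hr_!$ — does not suffice: it only shows that $\bfT_V(\cA)$ is a direct sum of shifted intersection complexes on $\bA^1\times\Bunpl$, which gives semisimplicity of perverse constituents but not that $\mathrm{Ai}_{\hG,V}$ is concentrated in a single degree and lisse on all of $\bA^1$. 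In the paper, being a semisimple local system is deduced from the $t$-exactness of $\bfT_V[1]$ together with the argument of \cite[Lemma 4.4.6]{Yun14} (and the $\Omega$-action identifying components, which handles the fact that Hecke operators shift the component by the central character of $V$ — another bookkeeping step your sketch omits). So the missing ingredient is the geometry of the relevant orbit in Section \ref{s:geomrelevantorb}; without it, parts (ii) and (iii) as you have set them up do not close.
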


\begin{defe} The Hecke eigenvalue $\mathrm{Ai}_{\hG}$ is called the \emph{$\hG$-Airy local system}. 
\end{defe} 

By comparing Frobenius traces, we prove that when $G=\GL_{n}$ and $V$ is the standard representation, $\mathrm{Ai}_{\hG,V}$ is isomorphic to a rank $n$ $\ell$-adic Airy local system. This justifies considering $\mathrm{Ai}_{\hG}$ as the reductive analogues of Katz's Airy local systems 

\begin{rem}\label{rem:characteristic} We use the assumption $p > h$ for the following reasons: 
\begin{enumerate} \item It guarantees the existence of an $\Ad$-invariant non-degenerate symmetric bilinear form $\kappa$ on $\fg$, cf. \cite[\S B.6]{JantzenLie}.
\item It allows us to work with the cyclic grading of order $h$ on $\fg$ defined by the point $\check{\rho}/h$ and use the  Vinberg--Levy theory, cf. \cite{Levy}.
\item It guarantees the existence of a Springer isomorphism between the nilpotent variety of $\fg$ and the unipotent variety of $G$ that serves as an analogue of the exponential map, cf. \cite{Sobaje}. Under our assumption this Springer isomorphism restricts to the reduction mod $p$ of the exponential map on the unipotent radical of any parabolic subgroup $P$. For that reason we will still often call it the exponential map. 
\end{enumerate}
On the other hand,  our \emph{definition} of the Airy automorphic datum requires only the existence of a non-degenerate $\mathrm{Ad}$-invariant bilinear form. Such a form always exists if $G=\GL_n$ or if $G$ is almost simple and $p$ is a good prime for $G$, cf. \cite{JantzenLie}. 
\end{rem}

\subsection{Outline of our approach} 
The paper is structured as follows. In Section \ref{s:airydatum} we set up basic notations, explain our motivation for the choice of level group and define the Airy automorphic datum. We recall basic facts about the moduli space of $G$-bundles with level structure and prove that in our situation it is zero-dimensional. This is an indication of rigidity. 

Section \ref{s:rigidity} is the technical heart of the paper. In this section we analyze the moduli stack $\Bun_J$ in terms of double cosets using the Birkhoff decomposition 
\begin{equation} 
 \Bun_{J} =L^-G\backslash G(F)/J=\bigsqcup_{\mu\in X_*(T)}L^-G\backslash L^-G t^\mu I(1)/J. 
 \end{equation} 
Here $L^-G = L^+G \cap G(k[t,t^{-1}])$ for a uniformizer $t$ at $\infty$, $I(1)$ denotes the pro-unipotent radical of the standard Iwahori subgroup and $J$ is the group defined in Section \ref{s:airydatum}. We identify the part of $\Bun_{J}$ which is relevant in the sense that it can support automorphic sheaves. The main result of this section is the following. 
\begin{thm}
	The relevant orbits are $L^-Gt^\mu J$ where $\mu$ is any minuscule coweight.
\end{thm}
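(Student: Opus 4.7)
The plan is to test orbit-by-orbit whether each piece $\mathcal{O}_\mu := L^-G\backslash L^-G t^\mu I(1)/J$ of the Birkhoff decomposition supports a nonzero $(J,\chi)$-equivariant perverse sheaf. The first step is to reduce relevance to a stabilizer condition. Viewing $\Bun_J$ as the quotient stack $L^-G\backslash G(F)/J$, the automorphism group of the canonical representative $[t^\mu]$ is the stabilizer $\Stab_\mu := t^{-\mu}L^-G t^\mu \cap J$, which acts on any stalk of a $(J,\chi)$-equivariant sheaf through $\chi|_{\Stab_\mu}$. Since $\chi$ kills $J^+$ by construction and is generic on the quotient $J/J^+$, the orbit is relevant if and only if $\Stab_\mu \subseteq J^+$.

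Next I would unpack this condition using the affine root datum. The group $L^-G$ is generated by $T(k)$ together with the affine root subgroups $U_{\alpha+n\delta}$ for $(\alpha,n)$ in a specific half-space, and conjugation by $t^\mu$ translates the $n$-coordinate by $-\langle\alpha,\mu\rangle$. The subgroups $J$ and $J^+$ are described in Section~\ref{s:airydatum} as generated by affine root subgroups whose slope at the barycentre $\check\rho/h$ is $\geq s$, respectively $>s$, where $s$ denotes the Airy depth. Hence $\Stab_\mu \subseteq J^+$ becomes the combinatorial statement that no affine root $\alpha+n\delta$ simultaneously lies in the $t^\mu$-shifted $L^-G$ half-space and on the critical wall $\alpha(\check\rho)+nh = s$.

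Finally I would show that this wall-avoidance criterion is equivalent to $\mu$ being minuscule. For $\mu$ minuscule, the bound $|\langle\alpha,\mu\rangle|\leq 1$ on every root forces the shifted half-space to miss the critical wall entirely, so $\Stab_\mu \subseteq J^+$. For $\mu$ not minuscule, there exists a root $\alpha$ with $\langle\alpha,\mu\rangle\geq 2$, and for a suitable $n$ the affine root $\alpha+n\delta$ both satisfies $\alpha(\check\rho)+nh = s$ and lies in the shifted $L^-G$ half-space; the corresponding root subgroup then sits in $\Stab_\mu$ but contributes nontrivially to $\chi$ by genericity, obstructing relevance. Combined with uniqueness of $t^\mu$ as a Birkhoff representative inside its $L^-G t^\mu I(1)$ piece, this identifies the relevant orbits with the single double cosets $L^-G t^\mu J$ for $\mu$ minuscule.

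I expect the main obstacle to be the non-minuscule half: exhibiting, uniformly in $\mu$, an affine root whose root subgroup both meets the critical wall and pairs nontrivially with the Airy character. This demands careful compatibility between the Vinberg--Levy grading of period $h$ at $\check\rho/h$, which supplies the $k$-form of $J/J^+$, and the explicit genericity of $\chi$, so that the exhibited affine root subgroup is actually detected by $\chi$ rather than swallowed by $\ker\chi$.
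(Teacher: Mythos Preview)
Your proposal has two genuine gaps. First, the relevance criterion ``$\Stab_\mu \subseteq J^+$'' is too strong: relevance only requires $\chi|_{\Stab_\mu}=1$, i.e.\ $\Stab_\mu\subseteq\ker\chi$, and since $\chi$ is a one-dimensional character of the positive-dimensional group $J/J^+$, its kernel is much larger than $J^+$. In fact for $\mu=0$ the stabilizer is $U_{\geq 1+h/2}(k)$, which lies in $I(h-1)$ but certainly not in $J^+=I(h+2)$; your criterion would therefore declare \emph{every} orbit irrelevant. The paper instead checks directly, via the residue pairing defining $\phi$, that $\chi$ vanishes on $(\Ad_{t^{-\mu}}U_\mu)\cap I(1+h/2)$.

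Second, and more seriously, you treat each Birkhoff piece $L^-G\backslash L^-G t^\mu I(1)/J$ as if it were a single point, testing only the representative $t^\mu$. But $J\subsetneq I(1)$, so each Birkhoff cell decomposes into many $J$-double cosets; the paper parametrizes them by a subscheme $A^\mu\subset I(1)$ via the factorization $I(1)=(\Ad_{t^{-\mu}}U_\mu)A^\mu J$ (Proposition~\ref{p:minuscule cosets}). The technical core of the theorem---Parts (ii) and (iii) in \S\ref{s:proof of thm}---is precisely to show that among the orbits $L^-G t^\mu a J$ with $a=\exp(et)\in A^\mu$, only $a=1$ is relevant. This requires writing out $\phi(\Ad_{a^{-1}}(\cdot))$ as a function of $e$, isolating the term $\kappa([e,N^-]+\tfrac{1}{2}[e,[e,E_\theta]],\fu_{\geq 1+h/2})$, and then running an induction on the graded pieces $e_r\in\fa_r$ using the principal $\mathfrak{sl}_2$ decomposition. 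Your affine-root wall-avoidance argument corresponds only to the easy Part (i) (ruling out non-minuscule $\mu$); it does not touch this inductive analysis at all.
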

Since the connected components of $\Bun_J$ are labelled by minuscule coweights the above theorem says that there is a unique relevant point on each component of $\Bun_J$. The main ingredient in the proof of rigidity is a detailed analysis of the cosets $L^-G\backslash L^-Gt^\mu I(1)/J$  where $\mu$ is minuscule. We construct a subscheme $A_\mu$ of $I(1)$ such that for any minuscule coweight we have a decomposition
\[I(1)=(\Ad_{t^{-\mu}}U_\mu)A^\mu J\] 
which in particular implies that $L^-Gt^\mu I(1)=L^-G t^\mu A^\mu J$. Then to prove rigidity we need to show that if $L^-Gt^\mu a J$ with $a\in A^{\mu}$ is relevant then $a$ has to be the identity. This is done in Section \ref{s:proof of thm}. 

To construct the Hecke eigensheaf in this situation we want to adopt the argument originally used by Heinloth, Ngô and Yun. Its key ingredient is that the embedding of the trivial bundle into $\Bun_{J}$ is affine. For that reason in Section \ref{s:geomrelevantorb} we study the geometry of the relevant orbits. In our situation the relevant orbits are not open in $\Bun_{J}$, but we prove that they are closed in the generic locus, see Proposition \ref{p:geom of relevant orbits}. Since the generic locus is affine, we conclude that the relevant orbit is itself affine. 

In Section \ref{s:heckeeigen} we construct the Hecke eigensheaf and we extract the eigen local system $\mathrm{Ai}_{\hG}$ using the techniques established in \cite{HNY13} and  \cite{Yun14}. We explain how our argument fits into the general framework of \cite[Appendix A]{JY20}. Roughly speaking the tensor category $\Rep(\hG)$ acts factorizably via Hecke operators on the category $\Perv(\chi)$ of automorphic sheaves and from this action one extracts the eigenvalue. 

In Section \ref{s:Identification}, we compute the eigenvalue for $G=\GL_n$ and the standard representation. We do this by computing its Frobenius trace function and identifying it with the Frobenius trace of a classical Airy sheaf. For this computation we need to understand the character $\chi$ completely explicitly in terms of matrices, cf. Proposition \ref{p:mu}. 

Finally, in the appendix, we gather some facts about minuscule (co-)weights and their relationships to cyclic gradings and parabolic subgroups.

\subsection{Open problems} 
\subsubsection{Slope and Swan conductor} 
By analogy with the complex geometric situation we make precise conjectures about the local structure of the Airy sheaves we construct, e.g. about their slope and their Swan conductor; ss \S \ref{ss:Conjectures}. In the case of $\GL_{n}$, we confirm that the newly constructed local systems agree with classical Airy sheaves as defined by Katz, in particular confirming the conjectures in this case.

\subsubsection{Identification of the complex local system} 
Our construction is entirely geometric and may be carried out over the complex numbers as well (by replacing Artin-Schreier sheaves with a rank one connection of exponential type). In this way one obtains a family of connections on $\bA^{1}$. We conjecture that the Airy connections constructed in \cite{KS21} are special cases of those connections we construct geometrically. Identifying these connections is related to the work of X. Zhu who identifies Heinloth-Ngô-Yun's Kloosterman sheaves with the Frenkel-Gross connection in \cite{Zhu17, DaxinXinwen, KXY}.

Moreover, from the computation of eigenvalue local systems $\mathrm{Ai}_{\hG}$ for $G=\GL_n$, we can see that in this case the family $\mathrm{Ai}_{\hG}$ is strictly larger than the particular Airy connection defined in \eqref{eq:AiryConnection}, see Proposition \ref{p:Frob trace of cE_mu} and Remark \ref{rem:extension}. It would be of interest to find the generalization of Airy $\hG$-connection in \eqref{eq:AiryConnection} that corresponds to the family of eigenvalues $\mathrm{Ai}_{\hG}$ for general $\hG$.

\subsubsection{Global monodromy group} 
The global monodromy group of  $\GL_n$-Airy sheaves in characteristic $p>n$ was computed by Katz in \cite{Katz87}. The analogous group in characteristic zero; i.e., the differential Galois group of $\GL_n$-Airy connections, was also computed by Katz in \cite{KatzGroups}. The methods of \cite{KatzGroups} have recently been generalised to the reductive setting \cite{KS22}. In particular, the differential Galois group of Airy $G$-connections are computed in \emph{op. cit}. In analogy with the motivic picture of \cite{KatzHypergeometric}, we conjecture that for $p>h$ the global monodromy of the $\hG$-Airy sheaves coincide with the differential Galois group of the $\hG$-Airy connection. The global monodromy of $\GL_n$-Airy sheaves in small characteristic is a fascinating subject with links to finite simple groups, see for example \cite{KatzFinite}, \cite{KatzSuzuki}. As mentioned in Remark \ref{rem:characteristic} the automorphic datum for $\GL_n$ can be defined in arbitrary characteristic, but our proof crucially uses the assumption $p > h$. It would be interesting to construct $\hG$-Airy sheaves in small characteristic whenever it is possible to define the automorphic datum.

\subsubsection{Stokes data}
As mentioned above, the study of asymptotics of the classical Airy equation goes back to Stokes. Recently, the first author and Hohl gave an explicit description of the Stokes data of $\GL_n$ Airy connections using the Fourier-Laplace transform for $\cD$-modules \cite{JakobHohl}. Explicit determination of the Stokes data for $\hG$-Airy sheaves is an interesting open problem.

\subsection{Acknowledgement} We would like to thank Daniel Sage, Zhiwei Yun, and Xinwen Zhu for helpful conversations. KJ was supported by the DFG Research Fellowship JA 2967/1-1. MK was supported by Australian Research Council Discover Grants.

\section{The Airy automorphic datum} \label{s:airydatum}
Let's briefly outline our approach for constructing Airy sheaves for reductive groups. Let $J \subset L_\infty^{+} G$ be a pro-algebraic subgroup of finite codimension and denote by $\Bun_{J}$ the moduli stack of $G$-bundles on $X$ with $J$-level structure at $\infty$. One can think of this stack as classifying $G$-torsors $\cE$ on $X$ together with a trivialization $\iota_\infty : \cE | _{\Spec(\cO_\infty)} \cong G \times \Spec(\cO_\infty)$ up to left multiplication by $J$. 

The strategy can be summarized in the following steps. 
\begin{enumerate}
\item Construct a suitable level group $J$ and a character $\chi : J \to \bQlt$ factoring through a finite dimensional quotient of $J$ over $k$ to obtain a perverse sheaf $\cA_{\chi}$ on $\Bun_{J}$. 
\item Prove that there is a unique point on each connected component of $\Bun_{J}$ that can support a perverse sheaf satisfying equivariance properties with respect to $J$ and $\chi$, an indication of rigidity. 
\item Conclude that $\cA_{\chi}$ is a Hecke eigensheaf whose eigenvalue is an $\hG$-local system on $\bA^1$.
\end{enumerate}
We usually refer to the pair $(J,\chi)$ as an automorphic datum. This terminology was introduced in \cite{Yun14} where Yun develops a framework for extracting Hecke eigensheaves from rigid automorphic data. For details on the notion of automorphic data we refer to \S 2.3. and \S 2.6. of \emph{op. cit.}.

\subsection{Notation and setup}

\subsubsection{The curve and other data} 
Let $k$ be a finite field of characteristic $p$ and fix a prime $\ell \neq p$. Let $K$ be the field of rational functions on $X=\bP^1$. Fix an affine coordinate $t$ on $X\setminus \{0\}$ and identify $K=k(t)$. For a closed point $x\in |X|$ we denote by $\cO_x$ the completed local ring at $x$ with maximal ideal $\cP_x$ and field of fractions $K_x$. We abbreviate $F=K_\infty$, $\cO = \cO_\infty$, $\cP = \cP_{\infty}$ and identify $\cO = k[\![t]\!]$, $\cP = tk[\![t]\!]$. The ring of adèles of $K$ is the restricted product
\[\bA_K =\sideset{}{'} \prod_{x\in |X|} K_x. \]
We fix a nontrivial additive character $\psi_{k}:k \rightarrow \bQlt$ determining an Artin-Schreier sheaf $\cL_{\psi}$  on $\bA^1$.

\subsubsection{The group and its Lie algebra} 
Let $G$ be a connected split reductive group over $k$ with almost simple derived subgroup. We will always make the following assumption.
\begin{center} The characteristic $p$ of $k$ is greater than the Coxeter number $h$ of $G$. \end{center}

Let $B$ be a Borel subgroup of $G$ with maximal torus $T$ and unipotent radical $U$. Denote the opposite Borel and its unipotent radical by $B^-$ and by $U^-$. We consider the corresponding Lie algebras $\fg, \fb, \fb^-, \fu, \fu^-$ and $\ft$, which is a Cartan subalgebra. The assumption $p>h$ guarantees the existence of a non-degenerate $\Ad$-invariant symmetric bilinear form $\kappa:\fg \times \fg \rightarrow k$ and we will fix one such form. 

Let $(\Phi, X^*(T), \Phi^{\check{}}, X_*(T))$ be the root datum of $G$, $\Delta=\Delta(G)$ be the set of simple roots with respect to $B$, and let $W=N_G(T)/T$ be the Weyl group of $G$. We let $\widehat{G}$ be the reductive group over $\bQl$ whose root datum is dual to that of $G$. 
 
For each root $\alpha\in\Phi$ we have the root subspace $\fg_\alpha \subset \fg$ and the root subgroup $U_\alpha \subset G$. We choose a basis $E_\alpha$ of $\fg_\alpha$ for each $\alpha$ and define the principal nilpotent element $N^-:=\sum_{\alpha\in\Delta}E_{-\alpha} \in \fg$. 

We will denote the highest root of $\Phi$ by $\theta$ and let $\check{\rho}$ be the half sum of positive coroots. For a subspace $\fh\subset\fg$ that is normalized by $\ft$, we denote its $\ft$-weights by $\Phi(\fh)$.

\subsubsection{Loop groups} Denote by $LG$ the loop group of $G$. For a point $x \in X$ we will also denote by $L_x G$ the loop group at $x$. It is an ind-scheme over the residue field of $x$. The positive loop group $L^+_{x} G$ is a pro-algebraic group over the residue field. We will also use the notation $L^-G = G(k[t^{-1}])$.

\subsubsection{Cyclic grading}\label{sss:cyclic grading}
Since $p>h$ the element $\check{\rho}/h$ defines an order $h$ cyclic grading on $\fg$
\begin{equation}\label{eq:grading}
\fg=\bigoplus_{r\in\bZ/h\bZ}\fg_r
\end{equation}
that is related to the decomposition according to heights of roots in the following way. For any root $\alpha \in \Phi$ let $\Ht(\alpha):=\langle \alpha, \check{\rho}\rangle$ be its height. For $1\leq r\leq h-1$ or $-(h-1)\leq r\leq -1$ define the subspace
$$
\fg(r):=\bigoplus_{\Ht(\alpha)=r}\fg_\alpha
$$
With the notation $\fg(0)=\ft$ we then have
\begin{equation}
\fg_r=\fg(r)\oplus\fg(r-h), \quad \forall\ 1\leq r\leq h-1;\qquad\qquad \fg_0=\fg(0)=\ft.
\end{equation}

For later use we also introduce the following notation.
$$
\fu_{\geq r}:=\bigoplus_{\Ht(\alpha)\geq r}\fg_\alpha=\bigoplus_{r\leq i\leq h-1}\fg(i),\quad
\fu^-_{\leq -r}=\bigoplus_{\Ht(\alpha)\leq -r}\fg_\alpha=\bigoplus_{1-h\leq i\leq -r}\fg(i),\quad
\forall\ 1\leq r\leq h-1.
$$

\subsubsection{The Coxeter torus} Since $p>h$ it does not divide the order of the Weyl group of $G$. Hence there is a well-known bijection between $G(F)$-conjugacy classes of maximal $F$-tori and conjugacy classes in the Weyl group $W$ (cf. \cite[\S 4.2]{GKM06}). We will refer to the torus corresponding to the Coxeter class as \textit{Coxeter torus}.

\subsection{Local structure of Airy sheaves at $\infty$} \label{sss:localAiry} Before we define $(J,\mu)$ in the next section, we give some motivation explaining our choice of level group $J$ and character $\mu$. This is informed mostly by the local Langlands correspondence, in particular by the construction of tame supercuspidal representations due to Adler \cite{Ad98}, Yu \cite{Yu01} and Fintzen \cite{Fin21}. 

To start with, assume that $G=\GL_n$. In this case Katz has constructed Airy sheaves using Fourier transform in \cite{Katz87} as follows.  
Let $\pr_1,\pr_2:\bA^2\ra\bA^1$ be the projections to the first and second coordinate, $m:\bA^2\ra\bA^1$ be the multiplication map.  For a constructible sheaf $\cF$ on $\bA^1$ its naive Fourier transform is defined as
$$
\mathrm{NFT}_\psi(\cF)=R^1\pr_{2,!}(\pr_1^*\cF\otimes m^*\cL_\psi).
$$

Recall that $\cL_{\psi}$ denotes the Artin-Schreier sheaf associated to the character $\psi : k \to \bQl$. Assume that $p > n+1$. Given a polynomial $f(x)\in k[x]$ of degree $n+1$, $n\geq 1$ we consider the rank one local system $\cL_{\psi(f)}=f^*\cL_\psi$ on $\bA^1$. Airy sheaves are by definition Fourier transforms of sheaves of the form $\cL_{\psi(f)}$, that is 
\begin{equation}
\cF_f:=\mathrm{NFT}_\psi(\cL_{\psi}(f))
\end{equation}
is an Airy sheaf in the sense of Katz. By \cite[Theorem 17]{Katz87} the sheaf $\cF_f$ is lisse on $\bA^1$ of rank $n$, and all of its breaks at $\infty$ are equal to $\frac{n+1}{n}$.

We can say even more about the local structure of an Airy sheaf using Fu's calculation of $\ell$-adic local Fourier transforms in \cite{Fu10}. Let $D_{\infty}^{\times}$ be a formal puncture disc around $\infty$. By the stationary phase formula 
\[
\cF_f |_{D_{\infty}^{\times}} \cong \mathrm{FT}^{(\infty, \infty)} (\cL_{\psi}(f) |_{D_{\infty}^{\times}} ) 
\]
where $\mathrm{FT}^{(\infty, \infty)}$ denotes Laumon's local Fourier transform. Let $E$ be the totally ramified tame extension $E$ of $F$ of degree $n$. By \cite[Theorem 0.1.]{Fu10}, the right hand side is isomorphic to a sheaf on $D_{\infty}^{\times}$ corresponding to the representation $\Ind_{E|F}(\xi)$ for some suitable character $\xi$ of level $n+1$ of $\mathrm{Gal}(F^{\mathrm{sep}} / E)$ (resp. of $E^{\times}$ via local class field theory).

\subsubsection{Recollections on local Langlands} 
Let us assume $G=\GL_n$ for simplicity. Because of our assumption $p>h=n$ we are in the tame case of the local Langlands correspondence which is easier to describe. Since the degree of $f$ is $n+1$ it is never divisible by $n$ and the pair $(E / F, \xi)$ is admissible in the sense of Bushnell and Henniart \cite[\S 3]{BH}. 
In this case the local Langlands correspondence is described rather explicitly. Roughly speaking the supercuspidal representation corresponding to $(E / F, \xi)$ is of the following form. Let $I\subset \GL_{n}(F)$ be the standard Iwahori subgroup with Moy--Prasad subgroups $I(r),r\in\bN$. For even $n$ the supercuspidal representation is of the form 
\begin{equation} \label{eq:even} 
c-\Ind_{E^\times I(\frac{n}{2}+1) }^{G(F)} \Lambda
\end{equation} 
for a certain character $\Lambda$ determined by $\xi$. If $n$ is odd, the situation is more complicated. The supercuspidal representation is of the form
\begin{equation} \label{eq:odd} 
c-\Ind_{E^\times I(\frac{n+1}{2}) }^{G(F)} \Lambda',
\end{equation} 
but now $\Lambda'$ is a certain (uniquely defined) irreducible representation of $E^\times I(\frac{n+1}{2})$. Write $n+1=2m$, let $\fp_E$ be the maximal ideal of $E$, let $U_E^1=1+\fp_E$ and write $V=U_E^1I(m)/U_E^1I(m+1)$. The character $\xi$ determines a character on $I(n+1)$ which extends to a character $\theta$ on $U_E^1(m+1)$. Composing the commutator pairing on $I(m)$ with $\theta$ defines a symplectic form
\[
 V \times V \rightarrow \mathbb{C}^\times. 
\]
Attached to the symplectic space $V$ is a Heisenberg group and its Heisenberg representation with central character $\theta$ can be used to determine the representation $\Lambda'$. On the other hand, to construct an automorphic datum we need to work with a character, not a representation. For that we want to extend the character $\theta$ on $U_E^1(m+1)$ as much as possible. The necessary condition to extend $\theta$ further is encoded in the symplectic form: $\theta$ extends to any isotropic subspace of  $V$. That means to extend $\theta$ as far as possible we should choose a maximal isotropic subspace of $V$, that is a Lagrangian subspace $L \subset V$. Note that in any case $E^\times$ is the Coxeter torus. 

\subsubsection{Back to the case of general reductive group} Now let us consider the case of general reductive $G$. Then the key point is that Adler, Yu and Fintzen  \cite{Ad98, Yu01, Fin21} have proved that (appropriate modifications of the) representations defined in \eqref{eq:even} and \eqref{eq:odd} continue to be irreducible (and supercuspidal). One of the key inputs in their construction is a sequence of twisted Levi subgroups. In our case this is only the Coxeter torus. The preceding discussion suggests the choice of level group in the following section. 

\subsection{The level group and the character}
Let $\kappa$ be the non-degenerate $\Ad$-invariant symmetric bilinear form we fixed before. For $\fg(F)=\fg\otimes_k F$ its composition with the residue gives a perfect pairing
\begin{equation}\label{eq:pairing}
\fg(F)\times\fg(F)\ra k: (X,Y)\mapsto 
\Res\left(\kappa(X,Y)\frac{dt}{t}\right)
\end{equation}

Consider $\tilde{X}:=t^{-1}N^-+t^{-2}E_\theta\in\fg(F)$. Its dual with respect to the above pairing defines a $k$-linear form
$$
\tilde{\phi}=\tilde{X}^*:Y\mapsto \Res\left(\kappa(\tilde{X},Y)\frac{dt}{t}\right).
$$
\begin{rem} The element $\tilde{X}$ is \textit{good} in the sense of \cite[Definition 2.2.4]{Ad98} and its centralizer is the Coxeter torus. In this way our definitions fit into Adler's construction of supercuspidal representations. 
\end{rem}
As explained above we need to define the level group in two different ways depending on the parity of $h$. 
\subsubsection{Even Coxeter number}
Assume  that the Coxeter number $h$ of $G$ is even. This includes all types of $G$ except type $A_{2n}$. We let $I\subset G(F)$ be the standard Iwahori subgroup with Moy--Prasad subgroups $I(r),r\in\bN$. Consider the restriction $\phi:=\tilde{\phi}|_{\Lie(I(1+h/2))}\in\Lie(I(1+h/2))^*$. For any $r$ the group $I(r)$ acts on $\Lie(I(1+h/2))^*$. Note that $\tilde{X}$ is a regular semisimple element and its centralizer $S:=Z_{G(F)}(\tilde{X})$ is the Coxeter torus. Let $S(r):=S\cap I(r)$\footnote{Note that this is smaller than $\Stab_{I(r)}(\phi)$, because the latter one contains $I(1+h/2)$ when $r\leq 1+h/2$.} and note that it stabilizes $\phi$. From the definition of $\phi$, we can see that it is trivial on $\Lie(I(2+h))$. Since $I(1+h/2)/I(2+h)$ is a unipotent commutative quotient, the exponential map gives an isomorphism $\Lie(I(1+h/2))/\Lie(I(2+h))\simeq I(1+h/2)/I(2+h)$. Thus $\phi$ also defines a character on $I(1+h/2)$, which is trivial on $I(2+h)$. 

Recall that we fixed a non-trivial additive character $\psi:k \rightarrow \bQlt$. Then $\psi\circ\phi:I(1+h/2)\ra\bQlt$ gives a character which is stabilized by $S(1)$ from definition of $\phi$. Since $S(1)$ is abelian, we can extend $\psi\phi$ to $J:=S(1)I(1+h/2)$, which we denote by $\chi$. 

\subsubsection{Odd Coxeter number}
For $G$ of type $A_{2n}$ we consider the parahoric subgroup $P\supset I$ defined as the stabilizer of the point $\check{\rho}/2n$ in the Bruhat-Tits building of $G(F)$. Observe that
\begin{equation}\label{eq:P(1+n)}
P(2)\subset I(2),\ I(n+2)\subset P(1+n)\subset I(n+1);\ P(1+n)/I(n+2)\simeq\fg(n+1),\ I(n+1)/P(1+n)\simeq\fg(n+1-h).
\end{equation}
Similar as before, we let $J=S(1)P(1+n)$. Again $\phi=\tilde{\phi}|_{\Lie(P(1+n))}$ is trivial on $\Lie(I(2+h))$. Since $[P(1+n),P(1+n)]\subset P(2+2n)\subset I(2+h)$, we see $P(1+n)/I(2+h)$ is commutative. Thus $\phi$ is a character of Lie algebra, which induces a character $\phi$ on $P(1+n)$. Let $\chi$ be any extension of $\psi\phi$ from $P(1+n)$ to $J=S(1)P(1+n)$.
\begin{defe}\label{d:Airy}
	We call $(J,\chi)$ the Airy automorphic datum.
\end{defe}

\subsection{The moduli stack of $G$-bundles with level structures}
We recall a few facts about the structure of the moduli stack of $G$-bundles. We define $J^{+} := I(h+2)$ and let $\Bun_{J}$ be the moduli stack of $G$-bundles on $X$ with level structure $J$ at $\infty$ and $\Bun_{J^{+}}$ be the moduli stack of $G$-bundles with $J^{+}$-level structure at $\infty$. Note that $J^{+}$ is a normal subgroup of $J$ and that $\Bun_{J^{+}}$ carries an action of $L:=J/J^{+}$ by changing the level structure. The natural map $\Bun_{J^{+}}\rightarrow \Bun_{J}$ is an $L$-torsor. 

\subsubsection{Birkhoff decomposition}\label{sss:birk}
Denoting $L^{-}G =  G (k[t^{-1}])$ we can use one-point uniformization to identify
$$
\BunJ = L^{-}G \backslash LG / J 
$$ 
by \cite[\S 2.12.]{Yun16}. Using the Birkhoff decomposition 
$$
LG = \bigsqcup_{\mu\in X_*(T)} L^-G t^\mu I(1)
$$ 
we obtain a stratification of $\BunJ$ into locally closed substacks indexed by coweights of $G$.

\subsubsection{Kottwitz map and connected components}\label{sss:comp}
The Kottwitz map $\kappa: \Bunpl \rightarrow X^*(Z\hG)$ induces an isomorphism 
\[\pi_0(\Bunpl) \cong X^*(Z\hG) \cong X_*(T) / \Lambda^{\vee}\]
where $\Lambda^{\vee}$ is the coroot lattice. For each $\alpha \in X^*(Z\hG)$ we denote the corresponding component by $\Bunpl^{\alpha}$. More explicitly, given a double coset represented by $g \in G(F)$, by the Cartan decomposition there is a unique coweight $\mu$ such that $g\in G(\cO) t^\mu G(\cO)$ and $\kappa$ maps the given double coset to the image of $\mu$ under the projection map $X_*(T) \rightarrow X^*(Z\hG)$. 

\subsection{Identification of $\pi_0(\Bunpl)$ with minuscule coweights} \label{ss:components}
The following lemma is standard: 
\begin{lem} Let $M^{\vee} := \{ \mu \in X^{*}(T) \mid 0 \le \langle \alpha^{\vee}, \mu \rangle \le 1\, \forall \alpha \in \Phi^+\}$ be the set of miniscule weights and $\Lambda\subset X^*(T)$ the root lattice. Then the canonical projection $M^{\vee} \to X^*(ZG)=X^{*}(T) / \Lambda$ induces a bijection (of sets). 
\end{lem}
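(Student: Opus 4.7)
The plan is to prove the bijection by establishing both surjectivity and injectivity of the canonical projection $M^\vee \to X^*(T)/\Lambda$ separately, relying on a height-minimizing argument together with a structural fact about dominant elements of the root lattice.

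For surjectivity, given any class $[\omega] \in X^*(T)/\Lambda$, I would first use that $W$ acts trivially on the quotient (since $s_\alpha \omega - \omega = -\langle \alpha^\vee, \omega\rangle \alpha \in \Lambda$) to replace $\omega$ by its unique dominant $W$-translate. Then I would pick a dominant representative $\nu$ in $[\omega]$ of minimal height $\langle 2\check{\rho}, \nu\rangle$; such a minimum exists since heights of dominant weights are non-negative integers. If $\nu$ were not minuscule, some positive root $\beta$ would satisfy $\langle \beta^\vee, \nu\rangle \geq 2$; the easy case $\beta = \alpha_i$ simple yields $\nu - \alpha_i$ dominant (as $\langle \alpha_j^\vee, \nu - \alpha_i\rangle \geq 0$ for all $j$) and of strictly smaller height, contradicting minimality. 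The harder case requires taking $\beta$ maximal in the dominance order among violating roots and verifying that $\nu - \beta$ remains dominant.

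For injectivity, the core claim is $M^\vee \cap \Lambda = \{0\}$. If $\mu \in M^\vee \cap \Lambda$, then $\mu$ is a dominant element of the root lattice, hence a non-negative integer combination $\mu = \sum c_i \alpha_i$ of simple roots---a standard fact that follows from the non-negativity of the inverse of the Cartan matrix. Assuming $\mu \neq 0$, one picks $i$ with $c_i \geq 1$ and derives a contradiction by combining $\langle \alpha_i^\vee, \mu\rangle = 2c_i + \sum_{j \neq i} c_j a_{ij} \in \{0,1\}$ with the sign constraints on Cartan integers. Once this is in hand, full injectivity follows: for $\mu, \mu' \in M^\vee$ with $\mu - \mu' \in \Lambda$, the difference satisfies $\langle \alpha^\vee, \mu - \mu'\rangle \in \{-1, 0, 1\}$ for all roots $\alpha$; applying $W$ to make it dominant preserves both this bound and the root-lattice property, so the dominant translate lies in $M^\vee \cap \Lambda = \{0\}$, forcing $\mu = \mu'$.

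The main obstacle I expect is the surjectivity step when $\nu$ is dominant with $\langle \alpha_i^\vee, \nu\rangle \leq 1$ for all simple roots but $\langle \beta^\vee, \nu\rangle \geq 2$ for some non-simple positive root: there is then no simple root to subtract, and one must argue more delicately (e.g.\ that the maximal violating $\beta$ yields $\nu-\beta$ dominant). A more conceptual route, which I would prefer if possible, goes through the extended affine Dynkin diagram of $G$: the dominant minuscule weights together with $0$ correspond to the special vertices (those fundamental weights $\omega_i$ whose dual mark $a_i^\vee$ equals $1$, together with the affine node), and these are permuted simply transitively by the group of diagram automorphisms $X^*(T)/\Lambda$. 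This directly yields both the count $|M^\vee| = |X^*(T)/\Lambda|$ and the distinctness of cosets.
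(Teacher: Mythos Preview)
Your approach differs substantially from the paper's. The paper does not argue directly with the root system at all: for $G$ almost simple and simply connected it simply cites Bourbaki, and the actual content of the proof is a reduction of the general reductive case to this one via a central-isogeny diagram chase (given $\phi:G\to G'$ with the lemma known for $G$, one shows the map $M'^\vee\to X^*(T')/\Lambda'$ is injective and surjective by comparing with the corresponding map for $G$). Your argument, by contrast, works intrinsically with the root datum and makes no case distinction by isogeny type; that is a genuine advantage if it can be completed.

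However, both halves of your direct argument have real gaps. For injectivity, the step ``pick $i$ with $c_i\geq 1$ and derive a contradiction from $\langle\alpha_i^\vee,\mu\rangle=2c_i+\sum_{j\neq i}c_j a_{ij}\in\{0,1\}$ together with the sign constraints $a_{ij}\leq 0$'' does not work: in type $A_2$ the element $\theta=\alpha_1+\alpha_2$ has $c_1=c_2=1$ and $\langle\alpha_i^\vee,\theta\rangle=2-1=1$ for both $i$, so no contradiction arises from simple coroots and Cartan signs alone. Of course $\theta\notin M^\vee$ because $\langle\theta^\vee,\theta\rangle=2$, but your written argument never invokes a non-simple coroot. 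The surjectivity obstacle you flag is the same phenomenon: once $\langle\alpha_i^\vee,\nu\rangle\leq 1$ for all simple $\alpha_i$, nothing you have written forces $\langle\beta^\vee,\nu\rangle\leq 1$ for non-simple $\beta$, and the assertion that $\nu-\beta$ is dominant for the maximal violating $\beta$ is neither proved nor obvious. One uniform fix for surjectivity is to minimize the $W$-invariant norm rather than the height: if $\langle\beta^\vee,\nu\rangle\geq 2$ for \emph{any} positive root $\beta$ then $\|\nu-\beta\|^2=\|\nu\|^2+(1-\langle\beta^\vee,\nu\rangle)\|\beta\|^2<\|\nu\|^2$, and passing to the dominant $W$-translate preserves the norm. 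Injectivity still needs a separate idea. The ``more conceptual route'' via special vertices of the affine diagram that you mention at the end is essentially the Bourbaki argument the paper invokes, so if you take that route you are back to the paper's proof for the key case and would still need to say something about the passage to general reductive $G$.
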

\begin{proof}  If $G$ is almost simple and simply connected, $M^{\vee}$ is the set of minuscule weights (including $0$) and this is proven in \cite[Chapter VI, \S 2.3.]{Bourbaki}. For a not necessarily simply connected almost simple group $G$ we may consider its simply connected cover $G^{\mathrm{sc}}$ with central isogeny $G^{\mathrm{sc}} \to G$. Even more generally when $G$ only has almost simple derived group, there is a central torus $R(G) \subset G$ such that the product map $R(G) \times G^{\der} \rightarrow G$ is a central isogeny. Once we know that the claim holds for almost simple groups, it is easy to verify for $R(G) \times G^{\der}$. Thus it remains to be shown that given a central isogeny $\phi: G\to G'$ of reductive groups such that the statement holds for $G$ it also holds for $G'$. The isogeny $\phi$ induces an injective map $X^*(T') \to X^*(T)$ that is compatible with the canonical pairing $\langle -,- \rangle$. Hence we obtain a commutative diagram (of sets)
\[\xymatrix{M'^{\vee} \ar[r]^{\phi} \ar[d]& M^{\vee} \ar[d] \\
X^*(T')/\Lambda' \ar[r]^{\phi} & X^*(T)/\Lambda
}\]
in which all maps are injective and $M^{\vee} \rightarrow X^*(T)$ is surjective in addition. Given $\mu'\in X^*(T')$ we let $\mu \in M^{\vee}$ such that $\mu+\Lambda = \phi(\mu')+\Lambda$. Then there are $\lambda_1, \lambda_2 \in \Lambda$ such that $\mu = \phi(\mu')+\lambda_2-\lambda_1$. Since $\phi|_{\Lambda'}: \Lambda' \rightarrow \Lambda$ is surjective we furthermore can find $\lambda'_1, \lambda'_2$ such that $\mu = \phi(\mu'+\lambda'_2 - \lambda'_1)$. Since $\mu \in M^{\vee}$ we have $\mu'+\lambda'_2 - \lambda'_1\in M'^{\vee}$ and the image in $X^*(T')/\Lambda'$ of this is the class of $\mu'$. This proves that $M'^{\vee}\to X^*(T')/\Lambda$ is surjective. 
\end{proof}

Let $M := \{ \mu \in X_*(T) \mid 0 \le \langle \alpha, \mu \rangle \le 1\, \forall \alpha \in \Phi^+\}$ be the set of miniscule coweights. 

\begin{cor} $\pi_0(\Bunpl) = X^*(Z\hG)=X_*(T) / \Lambda^{\vee}= M$
\end{cor}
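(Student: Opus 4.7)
The plan is to deduce the corollary directly from the preceding lemma by applying it to the Langlands dual group $\hat{G}$, and combining it with the Kottwitz description of the connected components already recalled in \S \ref{sss:comp}.

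First, I would recall that the Kottwitz map of \S \ref{sss:comp} identifies $\pi_0(\Bun_{J^+})$ with $X^*(Z\hat{G})$, so it suffices to exhibit a canonical bijection between $X^*(Z\hat{G})$ and $M$. By the standard duality between the root data of $G$ and $\hat{G}$, the character lattice of $T$ on the dual side is $X_*(T)$, the root lattice of $\hat{G}$ is the coroot lattice $\Lambda^\vee$ of $G$, and the pairing $\langle -, -\rangle$ used to define minuscule weights on the dual side is the same canonical pairing $X_*(T)\times X^*(T)\to \mathbb{Z}$ as on the original side. In particular, a weight of $\hat{G}$ (i.e., an element of $X_*(T)$) is minuscule exactly when it lies in $M$ as defined before the corollary, and $Z\hat{G}\cong X_*(T)/\Lambda^\vee$ canonically.

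Next, I would apply the preceding lemma to $\hat{G}$ in place of $G$: this yields a bijection between the set of minuscule weights of $\hat{G}$, namely $M$, and $X_*(T)/\Lambda^\vee$. Composing this bijection with the Kottwitz isomorphism and the duality identification $X^*(Z\hat{G})\cong X_*(T)/\Lambda^\vee$ produces the chain of bijections claimed by the corollary.

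There is no genuine obstacle here; the only point that needs a small sanity check is that $\hat{G}$ has almost simple derived subgroup whenever $G$ does, so that the lemma applies to it. This is immediate from duality of root data, since the irreducibility of the root system is preserved under passing to the dual root system. Once this is noted, the corollary is purely a matter of unwinding definitions.
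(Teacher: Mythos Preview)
Your proposal is correct and is precisely the argument the paper has in mind: the corollary is stated without proof because it is meant to follow immediately from the preceding lemma applied to $\hat{G}$ together with the Kottwitz isomorphism of \S\ref{sss:comp}. Your observation that the lemma is a statement about root data (so the base field of $\hat{G}$ is irrelevant) and that $\hat{G}$ inherits almost simple derived subgroup from $G$ is exactly the sanity check needed.
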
 

\subsubsection{} Let $\widetilde{W} := N_{(G(F)}(T(F) / T(\cO))$ be the Iwahori-Weyl group and let $\Omega$ be the stabilizer in $\widetilde{W}$ of the alcove defining the standard Iwahori subgroup from above. Then $\widetilde{W} \cong W_{\aff} \ltimes \Omega$ where $W_{\aff}$ is the affine Weyl group of $G(F)$ and furthermore $\Omega \cong X^*(Z\hG)$.

Since $J^{+}=I(h+2)$ is a Moy--Prasad subgroup of the standard Iwahori subgroup, $\Omega$ acts on $\Bunpl$ by changing the level structure. For $\alpha \in \Omega$ this allows us to identify the connected components of $\Bunpl$ via isomorphisms
\begin{align}\label{eq:infHecke}
\bfT_\alpha : \Bunpl^{0} \simeq \Bunpl^{\alpha},
\end{align}
see also \cite[\S 4.3.3.]{KY20}.

\subsubsection{Dimension of $\Bun_{J}$} 
The first evidence for rigidity of the Airy automorphic datum is a computation of the dimension of $\Bun_{J}$. For the sake of notation let us assume that $G$ is simple in this subsection. 
\begin{prop} The moduli stack $\Bun_{J}$ is zero-dimensional. 
\end{prop}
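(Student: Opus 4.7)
The strategy is to reduce the statement to an explicit dimension count. The natural map $\Bun_J \to \Bun_G$ is a smooth surjection whose fibers are locally isomorphic to $L^+_\infty G/J$, so
\[
\dim \Bun_J \;=\; \dim \Bun_G + \dim(L^+_\infty G/J) \;=\; -\dim G + \dim(L^+_\infty G/J),
\]
since $X = \bP^1$ has genus $0$. Thus it suffices to show $\dim(L^+_\infty G/J) = \dim G$. We treat the case of even Coxeter number $h$, where $J = S(1)I(1+h/2)$; the odd case (type $A_{2n}$) is entirely analogous, with $P(1+n)$ replacing $I(1+h/2)$ and the identifications in \eqref{eq:P(1+n)} supplying the requisite quotient dimensions.

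First, we would compute $\dim(L^+G/I(1+h/2))$ by walking down the Moy--Prasad chain $L^+G \supset I \supset I(1) \supset \cdots \supset I(1+h/2)$: we have $\dim(L^+G/I) = |\Phi^+|$, $\dim(I/I(1)) = \mathrm{rank}(G)$ (since $I(1)$ is the pro-unipotent radical of $I$), and for $r \geq 1$ the Moy--Prasad quotient $I(r)/I(r+1)$ is naturally identified with the cyclic-graded piece $\fg_r$ from \S\ref{sss:cyclic grading}, of dimension $|\Phi^+_r| + |\Phi^+_{h-r}|$ (for $1 \leq r \leq h-1$). Summing from $r = 1$ to $r = h/2$ and regrouping the contributions yields
\[
\dim(L^+G/I(1+h/2)) \;=\; \dim G + |\Phi^+_{h/2}|.
\]

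The heart of the argument is the matching computation $\dim(J/I(1+h/2)) = \dim(S(1)/S(1+h/2)) = |\Phi^+_{h/2}|$. To this end we plan to exploit the fact that the Coxeter torus $S$ splits over a totally tamely ramified degree-$h$ extension $F'$ of $F$; over $F'$ one has $\Lie S \otimes_F F' \cong \ft \otimes_k F'$, and a generator of $\Gal(F'/F)$ acts by the Coxeter element on $\ft$ and by an $h$-th root of unity on the uniformizer. Decomposing $\ft$ into Coxeter eigenspaces and taking Galois invariants will identify $\Lie S$ with a direct sum of one-dimensional $F$-lines, one for each exponent $m$ of $G$, the $m$-th line being supported in Moy--Prasad depth $-m/h + \bZ$. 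This yields $\dim(S(r)/S(r+1)) = \#\{i : m_i \equiv h - r \pmod h\}$; summing over $r = 1, \ldots, h/2$ gives $\#\{i : m_i \geq h/2\}$, which equals $|\Phi^+_{h/2}|$ by the Kostant--Macdonald duality $|\Phi^+_k| = \#\{i : m_i \geq k\}$ between root heights and exponents.

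Combining the two computations yields $\dim(L^+G/J) = \dim G + |\Phi^+_{h/2}| - |\Phi^+_{h/2}| = \dim G$, as required. The main technical obstacle is the Galois-theoretic analysis of the Coxeter torus producing the exponent decomposition of $\Lie S$; everything else reduces to careful bookkeeping with the cyclic grading together with the classical height/exponent duality.
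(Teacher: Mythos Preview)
Your argument is correct and follows the same overall architecture as the paper's proof: the dimension formula $\dim\Bun_J=-\dim G+\dim(G(\cO)/J)$, then a count of $\dim(G(\cO)/I(1+h/2))=\dim G+|\Phi^+_{h/2}|$ via the Moy--Prasad filtration, and finally the matching count $\dim(S(1)/S(1+h/2))=|\Phi^+_{h/2}|$.

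The one genuine methodological difference is in this last step. The paper works inside $\fg$: it uses that $\fs(r)/\fs(r+1)\simeq\fz_r$, the degree-$r$ piece of the Cartan $\fz=\fz_\fg(X_{-1})$ under the cyclic grading, and then invokes Panyushev to identify $\dim\fz_r$ with the number of exponents equal to $r$; summing gives $\#\{m_i\le h/2\}$, which is turned into $k_{h/2}$ via exponent duality and Kostant's height formula. You instead propose to work intrinsically with $S$ as a tamely ramified torus, splitting it over the degree-$h$ extension and reading off the Moy--Prasad jumps from the Coxeter eigenspace decomposition of $\ft$. Both routes land on the same exponent count; yours is perhaps more conceptual (it explains \emph{why} exponents appear, via the Coxeter eigenvalues), while the paper's is more self-contained given the cyclic-grading machinery already set up in \S\ref{sss:cyclic grading}.

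Two small cautions. First, your depth formula has a sign slip: in the paper's normalization the exponent-$m$ line sits in depth $m/h+\bZ$, not $-m/h+\bZ$, so the sum over $r=1,\ldots,h/2$ actually gives $\#\{m_i\le h/2\}$ rather than $\#\{m_i\ge h/2\}$. This is harmless because exponent duality $m_i\leftrightarrow h-m_i$ makes the two counts equal, but you should state it correctly. Second, the odd-$h$ case (type $A_{2n}$) is not quite ``entirely analogous'': the paper carries out that count separately, and in particular uses $S(1)\cap P(1+n)=S(n+2)$ and an explicit check that $\dim S(1)/S(n+2)=n+1$, so you should spell this out rather than wave at it.
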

\begin{proof} By \cite[\S 2.7.11]{Yun14} we have $\dim \Bun_{J} = -\dim(G) + \dim G(\cO_{\infty}) / J.$ First assume that $h$ is even. Consider the principal grading $\fg=\bigoplus_{r \in\bZ/h\bZ}\fg_r$, let $\Phi^{+}$ be the set of positive roots and let $k_r$ denote the number of positive roots of height $r$. Then $\dim \fg_r = k_r + k_{h-r}$ and
$$
\dim (I(1)/I(1+h/2)) = \sum_{r=1}^{h/2} \fg_{r} = \#\Phi^+ + k_{h/2}. 
$$
This implies that
$$
 \dim G(\cO_{\infty}) / I(1+h/2) = \dim B + \dim I(1) / I(1+h/2) = \dim G + k_{h/2}.
 $$
Therefore it remains to prove that $\dim S(1)/S(1+h/2) = k_{h/2}$. From the proof of \cite{Panyushev}[Theorem 4.2.(i)] where we take the grading defined by $\check{\rho}/h$, we see the dimension of $S(1)/S(1+h/2)$ equals the number of exponents of $\fg$ that are $\leq h/2$. From Theorem 4.2.(i) of $loc. cit.$, we see this number equals the number of exponents of $\fg$ that are $\geq h/2$, thus equals half of the sum of $\mathrm{rk}\fg$ with the multiplicity of $h/2$ in exponents. It follows from Theorem 4.2.(ii) of $loc. cit.$ that this equals $k_{h/2}$. 

The only case where $h=2n+1$ is odd is in type $A_{2n}$. In that case $J=S(1)P(1+n)$ and the parahoric subgroup $P$ is defined by the barycenter $\check{\rho}/2n$. One easily checks that 
$$
\dim G(\cO) / P(1+n) = \dim(G)+(n+1).
$$
Furthermore $S(1) \cap P(1+n) = S(n+2)$ and $\dim S(1) / S(n+2)$ is equal to the number of exponents of $\fg$ that are $< n+2$ and this is precisely $n+1$. Thus $\dim G(\cO) / J = \dim G$. 
\end{proof}

\subsubsection{Relevant orbits} 
Recall that $L=J/J^{+}$ and that the natural map $p:\Bunpl \to \Bun_{J}$ is an $L$-torsor. For any bundle $\cE^{+} \in \Bunpl$ denote by $S_{\cE^{+}}$ the stabilizer of $\cE$ in $L$.

\begin{defe} Let $\cE^{+}\in \Bunpl$ be a $G$-bundle. We say that it is relevant if the restriction of the character $\chi$ to $S_{\cE^{+}}^{\circ}$ is trivial. Otherwise it is irrelevant. \end{defe}

A bundle $\cE\in \Bun_J$ is relevant if and only if some bundle in the fiber of $p$ over $\cE$ is relevant. The choice of any such preimage $\cE^{+}$ identifies the automorphism group $\Aut(\cE)$ of $\cE$ with $S_{\cE^{+}}$, cf \cite[\S 2.7.4]{Yun14}. In terms of the one-point uniformisation in \S\ref{sss:birk}, if the bundle $\cE$ corresponds to the double coset $L^{-}G t^{\mu} g J$ with $g\in I(1)$ and $\mu$ some coweight, then $\Aut(\cE) \cong \Ad_{g^{-1}t^{-\mu}} L^{-}G \cap J$ and it is relevant if and only if $\chi$ is trivial on this intersection.

\section{Rigidity of the Airy automorphic datum}\label{s:rigidity} 

\subsection{Parametrization of orbits}
It is our goal to prove that the automorphic datum $(J,\chi)$ is rigid in the sense that for every $\alpha\in \pi_0(\Bunpl)$ there exists a unique relevant orbit $O_{+}^{\alpha}$ on the component $\Bunpl^{\alpha}$ of $\Bunpl$. For that we need to parametrize the orbits of $\Bun_{J}$. The Birkhoff decomposition implies that

\begin{equation}\label{eq:orbits}
\Bun_{J} =L^-G\backslash G(F)/J=\bigsqcup_{\mu\in X_*(T)}L^-G\backslash L^-G t^\mu I(1)/J.
\end{equation}

Recall that the orbit $L^-G t^\mu a J$, $a\in I(1)$ is relevant if and only if 
\begin{equation}\label{eq:relevant}
\chi(\Ad_{a^{-1}t^{-\mu}}L^-G\cap J)=1.
\end{equation}

As explained in \S\ref{sss:comp} the connected components of $\Bun_{J}$ can be parametrized by minuscule coweights. The main statement now is the following.
\begin{thm}\label{t:rigidity}
	The relevant orbits are $L^-Gt^\mu J$ where $\mu$ is any minuscule coweight.
\end{thm}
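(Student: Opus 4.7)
The plan is to parameterize orbits via the Birkhoff decomposition \eqref{eq:orbits} and to test relevance against $\chi$ one coweight-stratum at a time. There are two situations to handle: coweights $\mu$ that are minuscule (indexing the connected components through $\pi_0(\Bun_J) = M$), and arbitrary coweights $\nu$ that are not. The positive content of the theorem lies in the minuscule case, while the non-minuscule case must be excluded separately.

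For a fixed minuscule $\mu \in M$, the first step is to invoke the decomposition $I(1) = (\Ad_{t^{-\mu}} U_\mu)\, A^\mu\, J$ promised in the outline. Multiplying on the left by $t^\mu$ turns the first factor into $U_\mu t^\mu$, and since $U_\mu$ sits inside the constant group $G$, it is absorbed into $L^- G$, yielding
\[
L^- G\, t^\mu\, I(1) \;=\; L^- G\, t^\mu\, A^\mu\, J.
\]
Every orbit in the $\mu$-stratum therefore has a representative $L^- G\, t^\mu\, a\, J$ with $a \in A^\mu$, and the theorem reduces to showing (i) $a = 1$ gives a relevant orbit, and (ii) every nontrivial $a \in A^\mu$ gives an irrelevant one. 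For (i), I would pass through the Springer/exponential isomorphism to the Lie algebra and verify that any $Y \in \Lie(L^- G)$ with $\Ad_{t^{-\mu}} Y \in \Lie(J)$ pairs trivially with $\tilde{X} = t^{-1} N^- + t^{-2} E_\theta$ under the residue pairing \eqref{eq:pairing}; because $\mu$ is minuscule, $\Ad_{t^{-\mu}}$ shifts the height grading \eqref{eq:grading} on each root subspace by at most one, and this weight bookkeeping forces the residue to vanish.

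For (ii), which I expect to be the main obstacle, the approach is contrapositive: for each nontrivial $a \in A^\mu$, construct an explicit test element $Y \in \Lie(L^- G)$ such that $\Ad_{(a t^\mu)^{-1}} Y \in \Lie(J)$ but $\tilde{\phi}\bigl(\Ad_{(a t^\mu)^{-1}} Y\bigr) \neq 0$. The strategy is to design $A^\mu$ as a direct sum of pieces indexed by the cyclic grading $\fg = \bigoplus_r \fg_r$ of $\check{\rho}/h$, so that each $a \in A^\mu$ has a well-defined leading graded component; $Y$ is then chosen of complementary weight inside some $\fu^-_{\leq -r}$. The non-degeneracy of $\kappa$, together with the fact that $N^- + E_\theta$ is Coxeter-regular semisimple and so pairs nontrivially with every graded piece, ensures that the residue pairing detects the leading term of $a$.

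The principal difficulty is therefore the explicit combinatorial description of $A^\mu$ and the uniform production of $Y$ across the graded pieces of $a$; this is what Section \ref{s:proof of thm} will carry out. One additional task is to rule out relevant orbits in the non-minuscule Birkhoff strata $L^- G t^\nu I(1)/J$ for $\nu \notin M$: here I expect the stabilizer $\Ad_{(a t^\nu)^{-1}} L^- G \cap J$ to contain a root subgroup extending far enough into the level group for $\chi$ to be nontrivial, excluding relevance for every choice of $a$. Finally, the odd-Coxeter case ($G$ of type $A_{2n}$) needs a parallel but distinct analysis in which the Iwahori and its Moy--Prasad filtration are replaced by the parahoric $P$ attached to $\check{\rho}/2n$ and its filtration \eqref{eq:P(1+n)}, but the overall architecture of the argument is unchanged.
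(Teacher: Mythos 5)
Your overall architecture coincides with the paper's: the same decomposition $I(1)=(\Ad_{t^{-\mu}}U_\mu)A^\mu J$ from Proposition \ref{p:minuscule cosets} to reduce to representatives $a\in A^\mu$, a graded leading-term induction (via the $\check\rho/h$-grading, the principal $\mathfrak{sl}_2$-structure of $\fa_r=[\fg(r-1-h),N^+]$ and non-degeneracy of $\kappa$) to kill nontrivial $a$, and explicit root subgroups $U_\alpha(kt)$, $U_{-\theta}(kt^2)$ in the stabilizer to exclude non-minuscule strata. The only structural differences are cosmetic: the paper first treats $\mu=0$ and then transports the computation to general minuscule $\mu$ by $\Ad_{t^{-\mu}w_P}$ (using $\Ad_{w_P^{-1}t^\mu}\tilde X=\tilde X$), rather than running the induction directly on $A^\mu$, and the odd Coxeter case is handled verbatim with $h/2$ replaced by $n+1$ rather than by a separate analysis.

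There is, however, one genuine gap: in both your step (i) and step (ii) you test relevance by evaluating the residue functional $\tilde\phi$ against $\tilde X$, but $\chi$ is \emph{not} given by $\psi\circ\tilde\phi$ on all of $J=S(1)I(1+h/2)$; it equals $\psi\circ\phi$ only on $I(1+h/2)$, while its restriction to $S(1)$ is an arbitrary extension (indeed, by Proposition \ref{p:mu} and Remark \ref{rem:extension}, different extensions yield non-isomorphic eigenvalues). For the irrelevance direction this is easily repaired by choosing test elements whose $\Ad_{a^{-1}t^{-\mu}}$-conjugates land inside $I(1+h/2)$ itself --- the paper takes $Y\in\fu_{\geq 1+h/2}$ (resp.\ its $\Ad_{t^{-\mu}w_P}$-translate), so that conjugation by $a^{-1}\in I(1)$ stays in the normal subgroup $I(1+h/2)$ and $\chi=\psi\phi$ there. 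But for the relevance of $L^-Gt^\mu J$ your criterion ``$\Ad_{t^{-\mu}}Y\in\Lie(J)$ pairs trivially with $\tilde X$'' is insufficient: an element of $\Ad_{t^{-\mu}}U_\mu\cap J$ could a priori have a nontrivial $S(1)$-component modulo $I(1+h/2)$, on which $\chi$ is not controlled by $\tilde X$ at all. One must show
\[(\Ad_{t^{-\mu}}U_\mu)\cap S(1)I(1+h/2)=(\Ad_{t^{-\mu}}U_\mu)\cap I(1+h/2),\]
which the paper deduces degree by degree from $\fz_r\cap\fu_{\mu,r}=0$ (Lemma \ref{l:g_r decomp}); only then does the vanishing of the residue pairing on $\Ad_{t^{-\mu}}\fu_\mu\cap\Lie(I(1+h/2))$ give triviality of $\chi$ on the full stabilizer. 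Your weight bookkeeping should therefore be supplemented by this transversality statement between the Coxeter-torus directions $\fz_r$ and $\fu_{\mu,r}$; without it, step (i) does not prove what is needed.
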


We will see that it is easy to show there are no relevant orbits in $L^-G t^\mu I(1)$ unless $\mu$ is minuscule. To work with the minuscule cosets, we will need a finer parametrization for the minuscule cosets $L^-G\backslash L^-Gt^\mu I(1)/J$. We first establish this parametrization and use it to prove Theorem \ref{t:rigidity} in Section \ref{s:proof of thm}. The parametrization involves a factorization of $I(1)$ into a product of three groups, one of which is $J$.

\subsection{A finer parametrization of minuscule cosets} \label{ss:fine param}
Recall that $N^-=\sum_{\alpha\in\Delta}E_{-\alpha}$. There exists a unique regular nilpotent element $N^+\in\fu$ such that $\{N^-,2\check{\rho},N^+\}$ is a principal $\mathfrak{sl}_2$-triple. Consider the following subspaces of $\fg_r$:
\begin{equation}\label{eq:a_r}
\fa_r:=[\fg(r-1-h),N^+]\subset \fg(r-h)\subset\fg_r, \quad 1\leq r\leq h-1.
\end{equation}

Note that $\fg(-h)=0$, thus $\fa_1=0$. Define $\fa:=\bigoplus_{r=1}^{\lceil h/2\rceil}\fa_r\subset\fu_{\leq -\lfloor h/2\rfloor}^-$. Observe that $\fa t\subset\Lie(I(1))$. Thus $A:=\exp(\fa t)\subset I(1)$ is a $k$-subscheme. 

Similarly, for any minuscule coweight $\mu$ we have defined a nilpotent subalgebra $\fu_\mu^-$, a Weyl group element $w_P$ and a unipotent subgroup $U_\mu$ in \S\ref{a ss:parabolic}. Denote $\fa_r^\mu:=\Ad_{w_P}\fa_r$. From Lemma \ref{l:u conju u_mu} and Lemma \ref{l:w_P0w_0}.(3), $\fa_r^\mu\subset\fg_r\cap\fu_\mu^-=:\fu_r'^-$. Let $\fa^\mu:=\Ad_{w_P}\fa\subset\fu_\mu^-$. Note that $t\Ad_{t^{-\mu}}\fu_\mu^-=t\fu_M^-\oplus\fu_P$, thus $A^\mu:=\exp(t\Ad_{t^{-\mu}}\fa^\mu)=\Ad_{t^{-\mu}w_P}A\subset I(1)$. We have the following refined parametrization:

\begin{prop}\label{p:minuscule cosets}
	For any minuscule coweight $\mu$ we have  
	\[I(1)=(\Ad_{t^{-\mu}}U_\mu)A^\mu J\] and consequently $L^-Gt^\mu I(1)=L^-G t^\mu A^\mu J$.
\end{prop}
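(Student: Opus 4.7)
The plan is to pass from the group-theoretic statement $I(1) = (\Ad_{t^{-\mu}}U_\mu) A^\mu J$ to an additive statement at the Lie-algebra level, and then lift back using the pro-unipotent structure of $I(1)$.

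First I would reduce modulo a deep Moy--Prasad subgroup. Since $J \supset J^+ = I(h+2)$, it is enough to prove that the product map
\[
(\Ad_{t^{-\mu}}U_\mu) \times A^\mu \times (J/I(h+2)) \longrightarrow I(1)/I(h+2)
\]
is a bijection of $k$-schemes. The target is a successive extension of finite-dimensional unipotent groups filtered by $I(r)/I(h+2)$ with abelian quotients $I(r)/I(r+1)$, so this reduces to checking the corresponding additive statement in each associated graded piece.

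Next I would decompose $\Lie(I(1))/\Lie(I(h+2))$ using the $\check{\rho}/h$-grading \eqref{eq:grading} and the valuation $v(X_\alpha t^n) = n + \Ht(\alpha)/h$ into pieces of valuation $r/h$ for $1\le r\le h+1$, and compute the contribution of each factor:
\begin{itemize}
\item $\Lie(J) = \Lie(S(1)) + \Lie(I(1+h/2))$ contains everything in valuations exceeding $1/2$, and in the lower valuations contributes only the Coxeter-torus subspaces $\Lie(S(1))\cap \fg_r$.
\item $\Ad_{t^{-\mu}}\fu_\mu$ is a sum of root spaces shifted by $t^{-\langle\alpha,\mu\rangle}$; the minuscule condition forces $\langle\alpha,\mu\rangle \in \{-1,0,1\}$, giving a controlled list of low-valuation contributions.
\item $t\,\Ad_{t^{-\mu}}\fa^\mu$ is a copy of $\fa = \bigoplus_{r=1}^{\lceil h/2\rceil}\fa_r$ distributed across the appropriate valuation slots via the $w_P$-twist.
\end{itemize}

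The transversality step is to verify that in each graded piece these three contributions fill $\Lie(I(1))/\Lie(I(h+2))$ as a direct sum. This relies on two classical inputs, both available because $p>h$: the identity $\fa_r = [\fg(r-1-h), N^+]$ combined with Kostant's theorem on the principal $\mathfrak{sl}_2$-action realises $\fa_r$ as the $\mathrm{ad}(N^+)$-image complementary to $\Lie(S(1))\cap \fg_r$ inside $\fg_r$ (both sides being controlled by the exponents of $\fg$, exactly as in the dimension computation preceding this section), and the minuscule condition on $\mu$ partitions $\Phi$ into pieces that align cleanly with the Moy--Prasad filtration. A standard induction along $I(r)/I(h+2)$, using that commutators in $I(r)$ land in deeper filtration steps, then upgrades the additive decomposition to the desired multiplicative factorization. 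The consequence $L^- G t^\mu I(1) = L^- G t^\mu A^\mu J$ follows immediately because $t^\mu(\Ad_{t^{-\mu}}U_\mu) = U_\mu t^\mu$ and $U_\mu \subset G \subset L^- G$ can be absorbed into $L^- G$ on the left.

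The main obstacle is the transversality check in the graded pieces of valuation near $1/2$, where all three factors contribute nontrivially. Matching dimensions cleanly there requires precisely the classical fact that $\dim \fa_r$ equals the multiplicity of $r$ among the exponents of $\fg$, which in turn matches the codimension of $\Lie(S(1))\cap \fg_r$ inside $\fg_r$ not already accounted for by $\Ad_{t^{-\mu}}\fu_\mu$ and $\Lie(I(1+h/2))$; without this finely balanced relation one would obtain only dimensional inequalities. The odd-Coxeter case ($G$ of type $A_{2n}$) requires replacing $I(1+h/2)$ by $P(1+n)$ and invoking \eqref{eq:P(1+n)}, but the structural argument is unchanged.
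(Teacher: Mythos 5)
Your overall strategy (work in the graded pieces $I(r)/I(r+1)\simeq\fg_r$ of the Moy--Prasad filtration, use the principal $\mathfrak{sl}_2$/exponents data to decompose each piece, then induct upward using that commutators land deeper) is exactly the paper's route. But two of your concrete claims are false, and they are the ones you flag as the crux. First, the product map $(\Ad_{t^{-\mu}}U_\mu)\times A^\mu\times (J/I(h+2))\to I(1)/I(h+2)$ is \emph{not} a bijection: the intersection $(\Ad_{t^{-\mu}}U_\mu)\cap J$ contains $(\Ad_{t^{-\mu}}U_\mu)\cap I(1+h/2)$, which is a positive-dimensional group in general (already for $\mu=0$ it is $\exp(\fu_{\geq 1+h/2})$, nontrivial modulo $I(h+2)$ whenever $h\geq 4$), so the factorization is far from unique. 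Correspondingly, the dimension count you describe as the ``finely balanced relation'' does not balance: the total dimension of the three factors exceeds $\dim\bigl(I(1)/I(h+2)\bigr)$ by the number of positive roots of height $>h/2$. Second, the ``direct sum in each graded piece'' transversality fails in the pieces of depth $r>h/2$: there $\Lie(J)\supset\Lie(I(1+h/2))$ already fills all of $\fg_r$, while $\fu_{\mu,r}\neq 0$ still contributes, so the sum cannot be direct. A proof built on verifying bijectivity or exact dimension matching would therefore break down.

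The fix is to aim only at surjectivity, which is all the proposition asserts: one needs the direct-sum statement $\fg_r=\fz_r\oplus\fa_r^\mu\oplus\fu_{\mu,r}$ only in the range $1\leq r\leq h/2$ (resp.\ $\lceil h/2\rceil$ in type $A_{2n}$, where $I(1+h/2)$ is replaced by $P(1+n)$), because once the inductive factorization reaches depth $1+h/2$ the remaining factor already lies in $J=S(1)I(1+h/2)$ and nothing more must be produced. This truncated decomposition is precisely the paper's Lemma \ref{l:g_r decomp}, proved from the $\mathfrak{sl}_2$-splitting $\fg(r-h)=\ker(\ad_{N^-}|_{\fg(r-h)})\oplus[N^+,\fg(r-1-h)]$ together with the bijectivity of the projection $\fz_r\to\ker(\ad_{N^-}|_{\fg(r-h)})$, and then twisted by $w_P$ for general minuscule $\mu$. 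One further point you gloss over: $A^\mu=\exp(t\Ad_{t^{-\mu}}\fa^\mu)$ is only the exponential of a linear subspace, not a subgroup, so collecting the $\fa$-contributions from successive filtration steps into a single element of $A^\mu$ needs an argument like the paper's Lemma \ref{l:exp product} (products of such exponentials equal the exponential of the sum up to an error in deeper filtration); with that and the surjectivity-only formulation, your induction goes through, and your final absorption of $U_\mu$ into $L^-G$ for the second statement is correct (it is the paper's Lemma \ref{l:stab=u_mu}).
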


To prove the above Proposition and Theorem \ref{t:rigidity} we establish a series of preliminary results. Some of them are presumably known in the literature.

\subsubsection{}
Consider the projection $p_-:\fg_r=\fg(r)\oplus\fg(r-h)\ra\fg(r-h)$, $1\leq r\leq h-1$. Let $X_{-1}:=N^-+E_\theta\in\fg_{-1}$. As $X_{-1}$ is regular semisimple, $\fz:=\fz_\fg(X_{-1})$ is a Cartan subalgebra. It has a decomposition $\fz=\bigoplus_{r\in\bZ/h\bZ}\fz_r$ into eigenspaces of the grading \eqref{eq:grading}.

\begin{lem}\label{l:p_-(z_r)}
	The restriction of $p_-$ to $\fz_r$ induces a bijection $p_-:\fz_r \ra \ker(\ad_{N^-}|_{\fg(r-h)})$, $\forall\ 1\leq r\leq h-1$.
\end{lem}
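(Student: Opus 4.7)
The plan is to produce the map, verify that it lands in $\ker(\ad_{N^-}|_{\fg(r-h)})$, establish injectivity, and then conclude bijectivity by a global dimension count. Concretely, I would write a general $x \in \fz_r$ as $x = x_+ + x_-$ with $x_+ \in \fg(r)$ and $x_- \in \fg(r-h)$ and expand $[N^- + E_\theta, x_+ + x_-] = 0$ according to heights. For $1 \le r \le h-1$, the bracket $[E_\theta, x_+]$ sits in height $r + h - 1 \ge h$, which exceeds the maximal height $h-1$ and so vanishes. Separating the remaining terms by height yields
\begin{align*}
[N^-, x_+] + [E_\theta, x_-] &= 0 \quad \text{in } \fg(r-1), \\
[N^-, x_-] &= 0 \quad \text{in } \fg(r-1-h).
\end{align*}
The second displayed equation is exactly the condition $p_-(x) = x_- \in \ker(\ad_{N^-}|_{\fg(r-h)})$, so the map lands where claimed.

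For injectivity, suppose $x_- = 0$; then the first equation forces $[N^-, x_+] = 0$. Applying the principal $\mathfrak{sl}_2$-decomposition of $\fg$ (available since $p > h$), $\ker(\ad_{N^-})$ is spanned by the lowest-weight vectors of the irreducible summands, which sit at heights $-m_i$ where $m_i$ ranges over the exponents of $\fg$. Since $r \ge 1 > -m_i$, the intersection $\ker(\ad_{N^-}) \cap \fg(r)$ vanishes, so $x_+ = 0$ and $p_-|_{\fz_r}$ is injective.

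To promote injectivity to bijectivity, I would compare total dimensions. Because $X_{-1}$ is regular semisimple, $\fz$ is a Cartan subalgebra of dimension $\ell = \mathrm{rk}(\fg)$ and splits as $\fz = \bigoplus_{r \in \bZ/h\bZ} \fz_r$ under the grading. A direct check shows $\fz_0 = 0$: any $H \in \ft$ with $[X_{-1}, H] = 0$ satisfies $\alpha(H) = 0$ for every simple $\alpha$ (reading off the $\fg(-1)$-component of the bracket), hence $H = 0$. Thus $\sum_{r=1}^{h-1} \dim \fz_r = \ell$. On the other side, the $\mathfrak{sl}_2$-analysis gives $\dim \ker(\ad_{N^-}|_{\fg(-s)}) = \#\{i : m_i = s\}$, and summing over $s = 1, \ldots, h-1$ (equivalently $r = h - s$) also produces $\ell$. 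Combined with the componentwise injectivity above, this forces $\dim \fz_r = \dim \ker(\ad_{N^-}|_{\fg(r-h)})$ for each $r$, yielding bijectivity.

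The main delicate point is this global dimension matching rather than any individual computation: without the vanishing $\fz_0 = 0$, the identity $\dim \fz = \ell$ could not be restricted to the indices $1 \le r \le h-1$ appearing in the lemma, and the two sides of the dimension count would not obviously agree.
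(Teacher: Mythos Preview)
Your argument is essentially correct, but there is one slip in the dimension count: the claim $\fz_0 = 0$ requires the simple roots to span $\ft^*$, i.e., that $G$ be semisimple. The paper allows $G$ reductive with almost simple derived subgroup (e.g.\ $G = \GL_n$), and then $\{H \in \ft : \alpha(H) = 0 \ \forall \alpha \in \Delta\} = \mathfrak{z}(\fg)$, the center of $\fg$, which need not vanish. The fix is immediate: both sides of your global count then equal $\mathrm{rk}(\fg^{\der}) = \ell - \dim \mathfrak{z}(\fg)$ rather than $\ell$ (the center contributes trivial $\mathfrak{sl}_2$-summands sitting at height $0$, outside the range $r - h \in \{-(h-1), \ldots, -1\}$), and the rest goes through unchanged.

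Modulo this, your route differs from the paper's in two places. For injectivity, the paper observes that if $p_-(Y) = 0$ then $Y \in \fz \cap \fg(r) \subset \fz \cap \fu$ is simultaneously semisimple (lying in a Cartan) and nilpotent, hence zero; you instead use the height-$(r-1)$ equation to force $[N^-, x_+] = 0$ and then invoke the $\mathfrak{sl}_2$-location of lowest weight vectors. For the dimension match, the paper quotes Panyushev for the individual equality $\dim \fz_r = k_r = \#\{j : e_j = r\}$ and separately identifies $\dim \ker(\ad_{N^-}|_{\fg(r-h)}) = k_{h-r} = k_r$ via the exponent symmetry; you bypass the external reference by summing over $r$ and letting injectivity upgrade the total equality to a termwise one. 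Your version is more self-contained, at the cost of not recording the finer statement $\dim \fz_r = k_r$ along the way.
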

\begin{proof}
Given any $Y\in\fz_r\subset\fg_r=\fg(r)\oplus\fg(r-h)$, assume $Y=Y_r+Y_{r-h}$, $Y_r\in\fg(r)$, $Y_{r-h}\in\fg(r-h)$. $p_-(Y)=Y_{r-h}$. It satisfies
$$
0=[Y,X_{-1}]=[Y_r+Y_{r-h},N^-+E_\theta]=[Y_{r-h},N^-]+([Y_r,N^-]+[Y_{r-h},E_\theta])\in\fg(r-1-h)\oplus\fg(r-1).
$$

Thus $[Y_{r-h},N^-]=0$, $p_-(Y)=Y_{r-h}\in\ker(\ad_{N^-}|_{\fg(r-h)})$. Now we get a map $p_-:\fz_r\ra\ker(\ad_{N^-}|_{\fg(r-h)})$. The dimension of source and target are both equal to $k_r:=\#\{e_j|e_j=r\}$, i.e. the number of exponents that equal $r$. In fact, $\dim\fz_r=k_r$ is explained in the proof of \cite[Theorem 4.2.(i)]{Panyushev}, where it is also shown that $k_r=k_{h-r}$. The equality $\dim\ker(\ad_{N^-}|_{\fg(r-h)})=k_r=k_{h-r}$ is clear from the decomposition of $\fg$ into weight subspaces with respect to the principal $\mathfrak{sl}_2$-triple $\{N^-,2\check{\rho},N^+\}$. Therefore to show that $p_-$ is bijective it suffices to show it is injective. 

Given a basis $Z_1,Z_2,...,Z_{k_r}$ of $\fz_r$, where $Z_i=Y_{r-h}^i+Y_r^i$ for $Y_{r-h}^i\in\fg(r-h)$, $Y_r^i\in\fg(r)$. Suppose
$$
p_-(\sum_{i=1}^{k_r}\lambda_i Z_i)=\sum_{i=1}^{k_r}\lambda_i Y_{r-h}^i=0
$$

Then $\sum_{i=1}^{k_r}\lambda_i Z_i=\sum_{i=1}^{k_r}\lambda_i Y_r^i\in\fg(r)\cap\fz_r$ is both nilpotent and semisimple and thus equals zero. Therefore, $p_-$ is injective and furthermore bijective.
\end{proof}

\begin{lem}\label{l:g_r decomp} Denote $\fu_{\mu,r}=\fu_\mu\cap\fg_r$, $\fu_{\mu,r}^-=\fu_\mu^-\cap\fg_r$.
For any minuscule coweight $\mu$ and all $1\leq r\leq h-1$ we have 
\[\fg_r=\fz_r\oplus\fa_r^\mu\oplus\fu_{\mu,r}.\]
\end{lem}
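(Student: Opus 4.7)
My plan is to establish the three-fold direct sum decomposition by combining pairwise transversality with a dimension count, in that order.

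The easier transversality $\fa_r^\mu\cap\fu_{\mu,r}=0$ follows at once from the containment $\fa_r^\mu\subset\fu_{\mu,r}^-$ (stated in the setup just above) together with the fact that $\fu_\mu$ and $\fu_\mu^-$ intersect trivially in $\fg$ by their construction via the minuscule coweight $\mu$ in the appendix.  The harder transversality is $\fz_r\cap(\fa_r^\mu\oplus\fu_{\mu,r})=0$.  Here I exploit the injection $p_-\colon\fz_r\hookrightarrow\fg(r-h)$ from Lemma~\ref{l:p_-(z_r)} whose image is $\ker(\ad_{N^-}|_{\fg(r-h)})$; thus any nonzero $Z\in\fz_r$ projects to a nonzero lowest-weight vector of the principal $\mathfrak{sl}_2$-action on $\fg(r-h)$.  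If $Z=A+U$ with $A\in\fa_r^\mu$ and $U\in\fu_{\mu,r}$, then projecting to $\fg(r-h)$ and using Lemma~\ref{l:u conju u_mu} to control the $\fg(r)\oplus\fg(r-h)$-components of $\fa_r^\mu$ and of $\fu_{\mu,r}$, one shows that $p_-(A)+p_-(U)$ cannot land in $\ker(\ad_{N^-}|_{\fg(r-h)})$ unless both $A=0$ and $U=0$, forcing $Z=0$.

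For the dimension count, Lemma~\ref{l:p_-(z_r)} gives $\dim\fz_r$ equal to the number of exponents of $\fg$ equal to $r$.  For $\fa_r^\mu$, since $\Ad_{w_P}$ is an isomorphism, $\dim\fa_r^\mu=\dim\fa_r$; a dual application of principal $\mathfrak{sl}_2$-theory for the triple $\{N^-,2\check\rho,N^+\}$ shows $\ad_{N^+}$ is injective on $\fg(r-1-h)$ for $1\le r\le h-1$ (since the weight $2(r-1-h)$ is strictly above the lowest weight of every irreducible summand $V(2e_j)$ of $\fg$), giving $\dim\fa_r^\mu=\dim\fg(r-1-h)$.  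The remaining dimension $\dim\fu_{\mu,r}$ is read off from the root decomposition of $\fu_\mu$ intersected with $\fg_r$ using the minuscule condition $\langle\alpha,\mu\rangle\in\{-1,0,1\}$ together with the cyclic grading, and a direct combinatorial check verifies that the three summands have dimensions adding to $\dim\fg_r=\dim\fg(r)+\dim\fg(r-h)$.

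The main obstacle will be the hard transversality step.  Reconciling the cyclic $\check\rho/h$-grading on $\fg$ with the $\bZ$-grading induced by the minuscule coweight $\mu$ is delicate, since $\fz_r$ is defined via the Coxeter element $X_{-1}$ which does not respect the minuscule parabolic decomposition, while $\fa_r^\mu$ and $\fu_{\mu,r}$ are tied to that decomposition.  The key technical input will be the appendix description of $w_P$, which controls how the principal subspace $\fa_r$ is moved into $\fu_\mu^-$ and so ultimately how the projection of $\fa_r^\mu+\fu_{\mu,r}$ to $\fg(r-h)$ interacts with the lowest-weight space $\ker(\ad_{N^-}|_{\fg(r-h)})$.
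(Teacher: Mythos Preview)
Your proposal has a genuine gap in the ``hard transversality'' step.  For a nontrivial minuscule $\mu$, neither $\fa_r^\mu$ nor $\fu_{\mu,r}$ lives in a single height-graded piece: one has
\[
\fu_{\mu,r}=(\fu_M\cap\fg(r))\oplus(\fu_P^-\cap\fg(r-h)),\qquad
\fa_r^\mu\subset(\fu_M^-\cap\fg(r-h))\oplus(\fu_P\cap\fg(r)).
\]
Thus $p_-(A)+p_-(U)$ ranges over all of $(\fu_M^-\cap\fg(r-h))\oplus(\fu_P^-\cap\fg(r-h))=\fg(r-h)$, and the bare condition $p_-(A)+p_-(U)\in\ker(\ad_{N^-}|_{\fg(r-h)})$ does not force $A=U=0$.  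Your sketch provides no link between $p_-(A)$ and $p_+(A)$ (or between $p_-(U)$ and $p_+(U)$) that would close this; Lemma~\ref{l:u conju u_mu} only tells you $\fu_\mu=\Ad_{w_P}\fu$, not how $\ker(\ad_{N^-})$ sits relative to the $M$/$P$-splitting.  Your dimension count also tacitly needs $\dim\fu_{\mu,r}=\dim\fg(r)$, which already requires that $\Ad_{w_P}$ preserve $\fg_r$.

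The paper's route is much shorter and avoids this obstacle.  First treat the trivial coweight, where $\fu_{\mu,r}=\fg(r)$ and $\fa_r\subset\fg(r-h)$; then the height splitting $\fg_r=\fg(r)\oplus\fg(r-h)$ already isolates $\fg(r)$, and the remaining identity
\[
\fg(r-h)=\ker(\ad_{N^-}|_{\fg(r-h)})\oplus[\fg(r-1-h),N^+]=p_-(\fz_r)\oplus\fa_r
\]
is exactly the principal $\mathfrak{sl}_2$ weight decomposition together with Lemma~\ref{l:p_-(z_r)}.  Directness is then a two-line check using injectivity of $p_-$ on $\fz_r$.  For general $\mu$ one simply applies $\Ad_{w_P}$ to this decomposition: by Lemma~\ref{l:w_P0w_0}(3) the operator $\Ad_{w_P}$ preserves each $\fg_r$, and by Lemma~\ref{l:w_P0w_0}(2) the chosen representative $w_P$ lies in $Z_G(X_{-1})$, which is the (abelian) Coxeter torus, so $\Ad_{w_P}$ fixes $\fz$ and hence each $\fz_r$ \emph{pointwise}.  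Since $\Ad_{w_P}\fa_r=\fa_r^\mu$ by definition and $\Ad_{w_P}\fg(r)=\fu_{\mu,r}$ by Lemma~\ref{l:u conju u_mu}, the general decomposition is literally the $\Ad_{w_P}$-image of the trivial one, and no separate transversality or dimension argument is needed.
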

\begin{proof}
First consider $\mu=1$. We show that $\fg_r=\fz_r+\fa_r+\fg(r)$. To this end, it suffices to show that $\fg(r-h)=p_-(\fz_r)+\fa_r$. From Lemma \ref{l:p_-(z_r)} and definition of $\fa_r$, this is same as
$$
\fg(r-h)=\ker(\ad_{N^-}|_{\fg(r-h)})+[\fg(r-1-h),N^+]
$$
which is clear from the $\mathfrak{sl}_2$-triple decomposition. Moreover it is clear that the sum above is indeed direct. 

Now we prove that $\fg_r=\fz_r+\fa_r+\fg(r)$ is actually a direct sum decomposition. Let $Y=Y_{r-h}+Y_r\in\fz_r$, $X\in\fa_r\subset\fg(r-h)$ and $Z\in\fg(r)$ such that $Y+X+Z=(Y_{r-h}+X)+(Y_r+Z)=0$. Then it follows that $Y_{r-h}+X=0$ and $Y_r+Z=0$. From the above discussion we see that $Y_{r-h}=X=0$. Thus $p_-(Y)=Y_{r-h}=0$. From Lemma \ref{l:p_-(z_r)}, we know that $p_-$ is bijective and thus $Y=0$, $Y_r=0$, $Z=0$. This shows the decomposition in the statement is a direct sum.

Next, for general $\mu$, we know from Lemma \ref{l:w_P0w_0}.(3) that $w_P$ acts on $\fg_r$. Also, from Lemma \ref{l:w_P0w_0}.(2), we know $w_P\in Z_G(X_{-1})$. Thus it acts trivially on $\fz_r$. Since $\Ad_{w_P}\fg(r)=(\Ad_{w_P}\fu)\cap(\Ad_{w_P}\fg_r)=\fu_{\mu,r}$, the conjugation by $w_P$ of the decomposition $\fg_r=\fz_r\oplus\fa_r\oplus\fg(r)$ provides the decomposition for $\mu$.
\end{proof}

\begin{cor}\label{c:factorization of I(r)}
	For any minuscule coweight $\mu$ and any $1\leq r\leq h-1$, every $g_r\in I(r)$ can be written as a product $g_r=u_ra_rs_rg_{r+1}$, where $u_r\in (\Ad_{t^{-\mu}}U_\mu)\cap I(r)$, $a_r\in\exp(t\Ad_{t^{-\mu}}\fa_r^\mu)$, $s_r\in S(r)$, $g_{r+1}\in I(r+1)$.
\end{cor}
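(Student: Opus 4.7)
The plan is to prove the corollary by working in the abelian quotient $I(r)/I(r+1)$ and applying the direct sum decomposition of $\fg_r$ furnished by Lemma \ref{l:g_r decomp}.

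First, I would invoke the standard Moy--Prasad identification
\[\exp\colon \fg_r \xrightarrow{\sim} I(r)/I(r+1),\]
valid because $p > h$, where one reads $\fg_r = \fg(r) \oplus \fg(r-h)$ with the $\fg(r)$ summand corresponding to the $t^0$-part of $\Lie(I(r))$ and the $\fg(r-h)$ summand corresponding to the $t^1$-part $t\cdot\fg(r-h)$. The commutator estimate $[I(r), I(s)] \subset I(r+s)$ combined with $r \ge 1$ implies that $I(r)/I(r+1)$ is an abelian group, so products of elements of $I(r)$ can be analyzed additively at the Lie algebra level.

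Next I would verify that each of the three subgroups named in the statement maps, under the above identification, onto exactly one of the three summands in Lemma \ref{l:g_r decomp}. The case of $S(r)$ is straightforward: the Coxeter torus $S$ has Lie algebra $\fz$ whose Moy--Prasad graded piece at level $r$ is $\fz_r$. For the subgroups $\exp(t\Ad_{t^{-\mu}}\fa_r^\mu)$ and $(\Ad_{t^{-\mu}}U_\mu) \cap I(r)$, one has to track how conjugation by $t^{-\mu}$ shifts a root subspace $\fg_\alpha$ by $t^{-\langle\alpha,\mu\rangle}$, and then use that $\mu$ is minuscule (so $\langle\alpha,\mu\rangle \in \{-1,0,1\}$) together with the inclusion $\fa^\mu \subset \fu_\mu^-$ and the analogous description of $\fu_\mu$ provided in the appendix to identify the images modulo $I(r+1)$ with $\fa_r^\mu$ and $\fu_{\mu,r}$, respectively.

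Finally, given $g_r \in I(r)$, write $g_r \equiv \exp(X) \pmod{I(r+1)}$ with $X \in \fg_r$, decompose $X = X_{\fz} + X_{\fa} + X_{\fu}$ via Lemma \ref{l:g_r decomp}, lift each summand to group elements $s_r \in S(r)$, $a_r \in \exp(t\Ad_{t^{-\mu}}\fa_r^\mu)$, and $u_r \in (\Ad_{t^{-\mu}}U_\mu) \cap I(r)$, and set $g_{r+1} := s_r^{-1} a_r^{-1} u_r^{-1} g_r$. The abelian structure of $I(r)/I(r+1)$ makes the argument independent of the order of multiplication and forces $g_{r+1} \in I(r+1)$.

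The main obstacle I anticipate is the verification in the second step that the subgroups $(\Ad_{t^{-\mu}}U_\mu) \cap I(r)$ and $\exp(t\Ad_{t^{-\mu}}\fa_r^\mu)$ really do surject, modulo $I(r+1)$, onto the subspaces $\fu_{\mu,r}$ and $\fa_r^\mu$ respectively; keeping track of how $\Ad_{t^{-\mu}}$ crosses Moy--Prasad strata requires the explicit description of $\fu_\mu$ and $\fa^\mu$ together with the minuscule hypothesis on $\mu$. Once this bookkeeping is done, the proof is formal.
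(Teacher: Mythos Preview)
Your proposal is correct and follows essentially the same approach as the paper: pass to the abelian quotient $I(r)/I(r+1)\simeq\fg_r$, apply the direct sum decomposition of Lemma~\ref{l:g_r decomp}, lift each summand to the corresponding subgroup, and define $g_{r+1}$ as the leftover. The paper records the two embeddings $S(r)/S(r+1)\simeq\fz_r\hookrightarrow\fg_r$ and $\bigl((\Ad_{t^{-\mu}}U_\mu)\cap I(r)\bigr)\big/\bigl((\Ad_{t^{-\mu}}U_\mu)\cap I(r+1)\bigr)\simeq\fu_{\mu,r}\hookrightarrow\fg_r$ without further comment, so the bookkeeping you flag as the main obstacle is taken as evident there.
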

\begin{proof}
We have two embeddings
\begin{align*}
S(r)/S(r+1)\simeq\fz_r\hookrightarrow\fg_r\simeq I(r)/I(r+1),\\
(\Ad_{t^{-\mu}}U_\mu)\cap I(r)/(\Ad_{t^{-\mu}}U_\mu)\cap I(r+1)\simeq\fu_{\mu,r}\hookrightarrow\fg_r\simeq I(r)/I(r+1).
\end{align*}

Given any $g_r\in I(r)$, consider its image $\bar{g}_{r}\in I(r)/I(r+1)\simeq\fg_r$. By Lemma \ref{l:g_r decomp}, we can write it as 
$$
\bar{g}_{r}=z+a+x,\quad z\in\fz_r,\ a\in\fa_r^\mu,\ x\in\fu_{\mu,r}.
$$

Let $s_r\in S(r)$ be the preimage of $z$ under the first embedding above, $u_r=\exp(\Ad_{t^{-\mu}}x)$ be the preimage of $x$ under second embedding above, $a_r=\exp(t\Ad_{t^{-\mu}}a)$, $g_{r+1}=(u_r a_r s_r)^{-1}g_r$. Then $\bar{g}_{r+1}=1\in I(r)/I(r+1)$, $g_{r+1}\in I(r+1)$ as desired.
\end{proof}

To conclude the proof of Proposition \ref{p:minuscule cosets} we need one more simple observation.
\begin{lem}\label{l:exp product}
	For any $Y_1\in\fa\subset\fu^-$, $Y_2\in\fa_r$ and any $1\leq r\leq h-1$ we have 
\[
\exp(Y_1 t)\exp(Y_2 t)=\exp((Y_1+Y_2)t)g
\] 
for some $g\in I(1+r)$.
\end{lem}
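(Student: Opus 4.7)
The plan is to apply the Baker--Campbell--Hausdorff (BCH) formula inside the pro-unipotent pro-$p$ group $I(1)$. Under the hypothesis $p>h$, the Springer isomorphism of Remark \ref{rem:characteristic} makes $\exp$ (and $\log$) available on all finite-dimensional nilpotent subquotients of the Moy--Prasad filtration. Since $I(1+r)$ is normal in $I(1)$, it suffices to prove
\[
\log\bigl(\exp(Y_1 t)\exp(Y_2 t)\bigr) \equiv (Y_1+Y_2)\,t \pmod{\Lie(I(1+r))}
\]
inside the quotient $I(1)/I(1+r)$, whose nilpotency class is at most $r<p$; this ensures that the denominators appearing in BCH (bounded by factorials up to order $r$) are units in $k$.

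The central step is a depth estimate on the BCH series. A standard consequence of $\exp(X)\exp(0)=\exp(X)$ is that every Lie monomial of degree $k\geq 2$ appearing in $\log(\exp(Y_1 t)\exp(Y_2 t))$ must involve at least one copy of each of $Y_1 t$ and $Y_2 t$. Such a term has the form $c\cdot t^k\cdot [W_1,[W_2,\ldots,[W_{k-1},W_k]\ldots]]$ with $W_i\in\{Y_1,Y_2\}$, say with $a\geq 1$ copies of $Y_1$ and $b\geq 1$ copies of $Y_2$, $a+b=k$. Since $Y_1\in\fa\subset\bigoplus_{r'=1}^{\lceil h/2\rceil}\fg(r'-h)$ has root heights $\geq 1-h$, and $Y_2\in\fa_r\subset\fg(r-h)$ has root height exactly $r-h$, the iterated bracket has total root height $\geq a(1-h)+b(r-h)$. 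Using that the Moy--Prasad depth of $X t^n$ with $X\in\fg(s)$ with respect to the barycenter $\check\rho/h$ equals $nh+s$, the total depth of our monomial is at least
\[
kh + a(1-h) + b(r-h) \;=\; a + br \;\geq\; 1+r.
\]
Hence every non-linear BCH term lies in $\Lie(I(1+r))$, and the claim follows on setting $g:=\exp(-(Y_1+Y_2)t)\exp(Y_1 t)\exp(Y_2 t)$.

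The main obstacle is purely bookkeeping: confirming that BCH is legitimately applicable and tracking the depth bound uniformly across all bracket lengths. The bilinear contribution $\tfrac{1}{2}[Y_1 t,Y_2 t]=\tfrac{1}{2}[Y_1,Y_2]\,t^2$ is already the critical case $(a,b)=(1,1)$ where equality $a+br=1+r$ is attained, so the estimate is sharp; for higher-order terms the depth only grows, and one additionally needs $p>h\geq r$ to invert the BCH denominators. No further ingredients beyond the principal grading by $\check\rho/h$ and the elementary structure of the Moy--Prasad filtration are needed.
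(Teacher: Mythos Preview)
Your proof is correct and follows the same strategy as the paper: apply Baker--Campbell--Hausdorff and show that every non-linear term lands in $\Lie(I(1+r))$. The only difference is in how the depth estimate is carried out. You track the root heights of $Y_1$ and $Y_2$ separately and obtain the sharp bound $a+br\geq 1+r$ for a bracket with $a$ copies of $Y_1$ and $b$ copies of $Y_2$. The paper instead uses the cruder but sufficient observation that $Y_1,Y_2\in\fu^-$ and $\fu^-$ is a nilpotent subalgebra, so every iterated bracket of degree $k\geq 2$ lies in $t^k\fu^-$; since any element of $t^2\fu^-$ already has depth at least $2h-(h-1)=h+1\geq r+1$, no finer height bookkeeping is needed. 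Your argument gives a tighter estimate at the cost of a little more work; the paper's shortcut exploits that the $t$-power alone already dominates.
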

\begin{proof}
Note that $\fu^- t\subset\Lie(I(1))$. We want to show that
$$
\exp(-Y_1 t-Y_2 t)\exp(Y_1 t)\exp(Y_2 t)\in I(1+r)
$$

We know that for two elements $X,Y\in\fu^-$, $\exp(X)\exp(Y)=\exp(X+Y+[X,Y]/2+(\cdots))$, where $(\cdots)$ is a finite sum of iterated Lie brackets between $X$ and $Y$. Thus the above product is of the form $\exp(Z t)$, where $Z$ is a finite sum of iterated Lie brackets between $Y_1, Y_2$. This means $Zt\in\Lie(I(1+r))$, which completes the proof.
\end{proof}

\subsection{Proof of Proposition \ref{p:minuscule cosets}}\label{ss:proof of prop}
Now we are ready to establish the description of minuscule cosets. 
\begin{proof}
The second decomposition follows immediately from the first one and Lemma \ref{l:stab=u_mu}, thus it suffices to prove the first. Assume $h$ is even. Equivalently, we need to show that any $g\in I(1)$ can be written into a product $g=uasg'$, where $u\in \Ad_{t^{-\mu}}U_\mu$, $a\in A^\mu$, $s\in S(1)$, $g'\in I(1+h/2)$. This follows from Corollary \ref{c:factorization of I(r)}, Lemma \ref{l:exp product} and induction. More specifically:

First apply Corollary \ref{c:factorization of I(r)} to $r=1$ and $g_1=g$, we obtain $g=u_1a_1s_1g_2$, $a_1=\exp(v_1)$ for $v_1\in t\Ad_{t^{-\mu}}\fa_1^\mu$.

Next, apply Corollary \ref{c:factorization of I(r)} to $r=2$ and $g_2$, we obtain $g_2=u_2a_2s_2g_3$, $a_2=\exp(v_2)$ for $v_2\in t\Ad_{t^{-\mu}}\fa_2^\mu$. Thus
$$
g=u_1a_1s_1u_2a_2s_2g_3=(u_1u_2)(a_1a_2)(s_1s_2)g_3'
$$

Here we use that commutators of $I(1)$ and $I(r)$ are in $I(r+1)$. By Lemma \ref{l:exp product}, we have $a_1a_2=\exp(v_1+v_2)g_3''$ for some $g_3''\in I(3)$. Combining these, we get
$$
g=(u_1u_2)\exp(v_1+v_2)(s_1s_2)g_3'''
$$

Apply Corollary \ref{c:factorization of I(r)} to $r=3$ and $g_3'''$, we can similarly obtain $g=(u_1u_2u_3)\exp(v_1+v_2+v3)(s_1s_2s_3)g_4$. By induction, we get
$$
g=(\prod_{r=1}^{h/2}u_r)\exp(\sum_{r=1}^{h/2}v_r )(\prod_{r=1}^{h/2}s_r)g_{1+h/2}=uasg'
$$
where $u_r\in \Ad_{t^{-\mu}}U_\mu\cap I(r)$, $v_r\in t\Ad_{t^{-\mu}}\fa_r^\mu$, $s_r\in S(r)$. This proves the proposition.

For $h$ odd, recall from \eqref{eq:P(1+n)} that $I(n+2)\subset P(1+n)\subset I(n+1)$. Replacing $h/2$ with $\lceil h/2\rceil=n+1$ in above proof, we obtain $I(1)=(\Ad_{t^{-\mu}}U_\mu)A^\mu S(1)I(n+2)=(\Ad_{t^{-\mu}}U_\mu)A^\mu S(1)P(1+n)$ as desired.
\end{proof}

\section{Proof of Theorem \ref{t:rigidity}}\label{s:proof of thm}
The proof of our main rigidity result has four parts. 
\begin{enumerate}[label=(\roman*)]
\item We first prove that a coset $L^-G t^\mu aJ$ can only be relevant if $\mu$ is minuscule.
\item  Then we study the coset corresponding to the trivial coweight. If the coset $L^-GaJ$ with $a\in I(1)$ is relevant, then $a\in J$. 
\item We reduce the case of a general minuscule coweight to the case of the trivial one. 
\item Finally we prove that the cosets $L^-G t^\mu J$ are actually relevant. 
\end{enumerate}

In the following we prove for $h$ even. The proof for $h=2n+1$ odd is word-by-word the same as the even case, with $h/2$ replaced by $\lceil h/2\rceil=n+1$.

\subsection{Part (i): minuscule coweights}
Suppose $L^-G t^\mu aJ$ is relevant, $a\in I(1)$. Note that
$$
\Ad_{a^{-1}}(\Ad_{t^{-\mu}}L^-G\cap I(1+h))\subset\Ad_{a^{-1}t^{-\mu}}L^-G\cap J.
$$

If $\langle \alpha,\mu\rangle <0$ for some simple root $\alpha\in\Delta$, then 
$$
U_\alpha(kt)\subset\Ad_{t^{-\mu}}U_\alpha(k[t^{-1}])\cap I(1+h)\subset\Ad_{t^{-\mu}}L^-G\cap I(1+h)
$$

From the definition of $\phi$, we know it is generic on $I(1+h)/I(2+h)$. Thus it is nontrivial on $U_\alpha(kt)$. Since $a\in I(1)$, $\Ad_{a^{-1}}$ acts as identity on $I(1+h)/I(2+h)$, $\phi$ is also nontrivial on $\Ad_{a^{-1}}U_{\alpha_i}(kt)$. Thus such an orbit is irrelevant and we must have $\langle \alpha,\mu\rangle \geq 0$, $\forall\alpha\in\Delta$. Similarly, if $\langle \theta,\mu \rangle \geq 2$, then $\Ad_{a^{-1}}U_{-\theta}(kt^2)$ is contained in the stabilizer, and $\phi$ is nontrivial on it. Thus $\langle \theta,\mu\rangle \leq 1$, i.e. $\mu$ is minuscule.

\subsection{Part (ii): trivial coweight}\label{ss:rigidity for trivial coweight}
First consider the case of $\mu=1$. Proposition \ref{p:minuscule cosets} shows $L^-G I(1)=L^-GAJ$. We claim that if $L^-GaJ$, $a\in A$ is relevant, then $a=1$.

\subsubsection{}
Assume $a=\exp(et)$, $e\in\fa$. We need to show that if $L^-GaJ$ is relevant, then $e=0$. Note that
$$
\Ad_{a^{-1}}(U(k)\cap I(1+h/2))\subset\Ad_{a^{-1}}L^-G\cap J
$$
and $\phi$ must be trivial on above subgroup. Note that $U(k)\cap I(1+h/2)=\exp(\fu_{\geq 1+h/2})$. From the construction of $\phi$, we have
\begin{equation}\label{eq:unit coset relevant}
\begin{split}
&\phi(\Ad_{a^{-1}}(U(k)\cap I(1+h/2)))\\
=& \Res(\kappa(\tilde{X},\Ad_{a^{-1}}\fu_{\geq 1+h/2}))\frac{dt}{t}\\
=& \Res(\kappa(\Ad_{\exp(et)}(t^{-1}N^-+t^{-2}E_\theta),\fu_{\geq 1+h/2}))\frac{dt}{t}\\
=& \Res(\kappa(t^{-1}N^-+t^{-2}E_\theta+[e,N^-]+\frac{1}{2}[e,[e,E_\theta]]+t(\cdots),\fu_{\geq 1+h/2}))\frac{dt}{t}\\
=& \kappa([e,N^-]+\frac{1}{2}[e,[e,E_\theta]],\fu_{\geq 1+h/2})=0
\end{split}
\end{equation}

and we get
\begin{equation}\label{eq:simplified relevant}
\kappa([e,N^-]+\frac{1}{2}[e,[e,E_\theta]],\fu_{\geq 1+h/2})=0.
\end{equation}

\subsubsection{}
Assume $e=\sum_{r=1}^{h/2}e_r\in\fa$, $e_r\in\fa_r$. Since $\fa_1=0$, we already know $e_1=0$. We will prove $e_r=0$ by induction on $r$.

Suppose $2\leq r\leq h/2$ and $e_i=0$ for $1\leq i\leq r-1$. Note that $h+1-r\geq 1+h/2$ and $\fg(h+1-r)\subset\fu_{\geq 1+h/2}$. From \eqref{eq:simplified relevant}, we have
$$
\kappa([e,N^-]+\frac{1}{2}[e,[e,E_\theta]],\fg(h+1-r))=\kappa([e_r,N^-]+\frac{1}{2}\sum_{r_1+r_2=r}[e_{r_1},[e_{r_2},E_\theta]],\fg(h+1-r))=0
$$

Here we use that $e_i\in\fg(i-h)$ for any $i$ and that $\kappa(X,\fg(h+1-r))=0$ for any $X\not\in\fg(r-1-h)$. Since $1\leq r_1,r_2\leq h/2$ and $r_1+r_2=r$, we have $r_1,r_2<r$. By induction hypothesis, $e_{r_1}=e_{r_2}=0$. Thus
\begin{equation}\label{eq:1}
\kappa([e_r,N^-],\fg(h+1-r))=\kappa(e_r,[N^-,\fg(h+1-r)])=0
\end{equation}

Also, by the principal $\mathfrak{sl}_2$-triple decomposition, we have
\begin{equation}\label{eq:2}
\fg(h-r)=\ker(\ad_{N^+}|_{\fg(h-r)})\oplus[N^-,\fg(h+1-r)]
\end{equation}

Recall $e_r\in\fa_r=[\fg(r-1-h),N^+]$. Assume $e_r=[Y,N^+]$. We have
\begin{equation}\label{eq:3}
\kappa(e_r,\ker(\ad_{N^+}|_{\fg(h-r)}))=\kappa([Y,N^+],\ker(\ad_{N^+}|_{\fg(h-r)}))=\kappa(Y,[N^+,\ker(\ad_{N^+}|_{\fg(h-r)})])=0
\end{equation}

Combining \eqref{eq:1},\eqref{eq:2},\eqref{eq:3}, we get
$$
\kappa(e_r,\fg(h-r))=0.
$$

Also observe that $\kappa(e_r,\fg(-r))\subset\kappa(\fg(r-h),\fg(-r))=0$. Since $\fg_{-r}=\fg(h-r)\oplus\fg(-r)$, we get
$$
\kappa(e_r,\fg_{-r})=0
$$

As $e_r\in\fg_r$ and $\kappa$ is nondegenerate on $\fg_r\times\fg_{-r}$, we get $e_r=0$. By induction, $e=0$. This proves the claim.

\subsection{Part (iii): general minuscule coweights}
Next, we reduce the case of general $\mu$ to $\mu=1$.

We claim that if $L^-G t^\mu aJ$, $a\in A^\mu$, is relevant, then $a=1$.

\subsubsection{}
Similar as before, if a coset $L^-G t^\mu aJ$ is relevant, $a=\exp(et)\in A^\mu$, $e=\Ad_{t^{-\mu}w_P}e'$ where $e'\in\fa$, then $\phi$ must be trivial on
$$
\Ad_{a^{-1}}(\Ad_{t^{-\mu}}U_\mu\cap I(1+h/2))\subset\Ad_{a^{-1}t^{-\mu}}L^-G\cap J.
$$

We get
\begin{equation}\label{eq:minuscule relevant}
\begin{split}
&\phi(\Ad_{a^{-1}}(\Ad_{t^{-\mu}}U_\mu\cap I(1+h/2)))\\
=&\Res(\kappa(\Ad_a\tilde{X},\Ad_{t^{-\mu}}\fu_\mu\cap\Lie(I(1+h/2))))\frac{dt}{t}\\
=&\Res(\kappa(\Ad_{t^{-\mu}w_P}\Ad_{\exp(e't)}\Ad_{w_P^{-1}t^\mu}\tilde{X},\Ad_{t^{-\mu}w_P}\fu\cap\Lie(I(1+h/2))))\frac{dt}{t}\\
=&\Res(\kappa(\Ad_{\exp(e't)}\Ad_{w_P^{-1}t^\mu}\tilde{X},\fu\cap\Ad_{w_P^{-1}t^\mu}\Lie(I(1+h/2))))\frac{dt}{t}=0
\end{split}
\end{equation}

We show this is the same equation as \eqref{eq:unit coset relevant}.

\subsubsection*{Step 1} We have to prove that
\begin{equation}\label{eq:reduction1}
\Ad_{w_P^{-1}t^\mu}\tilde{X}=\tilde{X}.
\end{equation} 

Equivalently, we need to show $\Ad_{t^{-\mu}w_P}\tilde{X}=\tilde{X}$. In fact, using Lemma \ref{l:w_P0w_0}.(1) and the proof of Lemma \ref{l:w_P0w_0}.(2), we have

\begin{align*}
&\Ad_{t^{-\mu}w_P}\tilde{X}
=\Ad_{t^{-\mu}w_{P,0}}\Ad_{w_0}(t^{-1}N^-+t^{-2}E_\theta)
=\Ad_{t^{-\mu}w_{P,0}}(t^{-1}\sum_{\alpha\in\Delta}E_\alpha+t^{-2}E_{-\theta})\\
=&\Ad_{t^{-\mu}}\Ad_{w_{P,0}}(t^{-1}\sum_{\alpha\in\Delta-\{\alpha_\mu\}}E_\alpha+t^{-1}E_{\alpha_\mu}+t^{-2}E_{-\theta})\\
=&\Ad_{t^{-\mu}}(t^{-1}\sum_{\alpha\in\Delta-\{\alpha_\mu\}}E_{-\alpha}+t^{-1}E_\theta+t^{-2}E_{-\alpha_\mu})\\
=&t^{-1}\sum_{\alpha\in\Delta-\{\alpha_\mu\}}E_{-\alpha}+t^{-2}E_\theta+t^{-1}E_{-\alpha_\mu}=\tilde{X}.
\end{align*}
Here the action of Weyl group sending $E_\alpha$'s to other $E_\alpha$'s comes from the choice of $w_P$ in Lemma \ref{l:w_P0w_0}.(2).

\subsubsection*{Step 2} We find that
\begin{equation}\label{eq:reduction2}
\begin{split}
&\fu\cap\Ad_{w_P^{-1}t^\mu}\Lie(I(1+h/2))
=\Ad_{w_P^{-1}t^\mu}(\Ad_{t^{-\mu}w_p}\fu\cap\Lie(I(1+h/2)))\\
=&\Ad_{w_P^{-1}t^\mu}((\fu_M\oplus\fu_P^-t)\cap\Lie(I(1+h/2)))\\
=&\Ad_{w_P^{-1}t^\mu}((\fu_M\cap\bigoplus_{r=1+h/2}^{h-1}\fg_r)\oplus(\fu_P^-\cap\bigoplus_{r=1+h/2}^{h-1}\fg_r)t)\\
=&\Ad_{w_P^{-1}}((\fu_M\cap\bigoplus_{r=1+h/2}^{h-1}\fg_r)\oplus(\fu_P^-\cap\bigoplus_{r=1+h/2}^{h-1}\fg_r))\\
=&\Ad_{w_P^{-1}}(\fu_\mu\cap\bigoplus_{r=1+h/2}^{h-1}\fg_r)=\fu_{\geq1+h/2}.
\end{split}
\end{equation}

In the last step above we used Lemma \ref{l:u conju u_mu} and Lemma \ref{l:w_P0w_0}.(3).

From \eqref{eq:reduction1} and \eqref{eq:reduction2}, we see that \eqref{eq:minuscule relevant} is same as \eqref{eq:unit coset relevant}. Thus from the proof in \S\ref{ss:rigidity for trivial coweight}, we get $e'=0$, $a=1$. The Claim is proved.

\subsection{Part (iv): the minuscule orbits are relevant} \label{ss:relevant}
To complete the proof of Theorem \ref{t:rigidity}, it remains to examine that $L^-G t^\mu J$ is relevant when $\mu$ is minuscule. From Lemma \ref{l:stab=u_mu}, this is same as requiring that the character $\chi$ in Definition \ref{d:Airy} is trivial on
$$
\Ad_{t^{-\mu}}L^-G\cap J=(\Ad_{t^{-\mu}}L^-G\cap I(1))\cap J=(\Ad_{t^{-\mu}}U_\mu)\cap S(1)I(1+h/2).
$$

It suffices to show $(\Ad_{t^{-\mu}}U_\mu)\cap S(1)I(1+h/2)=(\Ad_{t^{-\mu}}U_\mu)\cap I(1+h/2)$, because $\phi=\chi|_{I(1+h/2)}$ is trivial on this subgroup from its definition. To see this, we can prove the following fact:
$$
(\Ad_{t^{-\mu}}U_\mu)\cap S(r)I(1+h/2)=(\Ad_{t^{-\mu}}U_\mu)\cap S(r+1)I(1+h/2),\quad \forall\ 1\leq r\leq h/2.
$$

Given $sg\in (\Ad_{t^{-\mu}}U_\mu)\cap I(r)$ on the left-hand side, $s\in S(r)$, $g\in I(1+h/2)$, we look at its image $\bar{s}$ in the quotient $I(r)/I(r+1)\simeq\fg_r$. By Lemma \ref{l:g_r decomp}, we get $\bar{s}=\fz_r\cap\fu_{\mu,r}=0$. Thus $sg\in I(r+1)$, $s\in S(r+1)$. Applying this fact for $r=1,2,...,h/2$, we get the desired $(\Ad_{t^{-\mu}}U_\mu)\cap S(1)I(1+h/2)=(\Ad_{t^{-\mu}}U_\mu)\cap I(1+h/2)$. 

This finishes the proof of rigidity of the Airy automorphic datum.

\section{Geometry of relevant orbits}\label{s:geomrelevantorb}
In this section we study the geometry of the relevant orbits. They are not open, but we prove they are closed in the generic locus of their respective connected components. In particular we show that relevant orbits are locally closed in the whole stack $\Bun_{J}$ and that it is affine in the neutral component. The main result is Proposition \ref{p:geom of relevant orbits}.

For a subgroup $H\subset G(F)$ of finite codimension in some parahoric subgroup of $G(F)$ we write $\Bun_H:=[L^-G\backslash LG/H]$. Recall the Birkhoff decomposition \eqref{eq:orbits}. 

\begin{prop}\ \label{p:geom of relevant orbits}
\begin{itemize}
	\item [(1)] The embedding $L^-G\backslash L^-G t^\mu I(1)/J\subset\Bun_J$ is open for any minuscule coweight $\mu$. For $\mu=1$ it's defined by the non-vanishing of a section of line bundle and in particular it is affine. 
	\item [(2)] The embedding of the relevant orbit $L^-G\backslash L^-G t^\mu J/J\subset L^-G\backslash L^-G t^\mu I(1)/J$ is closed for any minuscule coweight $\mu$. 
\end{itemize}
\end{prop}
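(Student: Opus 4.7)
The strategy combines the Birkhoff stratification of $LG$ with Proposition \ref{p:minuscule cosets} and the rigidity results of Section \ref{s:proof of thm}. Openness in (1) will follow from Grothendieck--Harder on $\bP^1$ transferred through the smooth forgetful map $\Bun_J\to\Bun_G$; closedness in (2) will follow by computing the fibre of the atlas $A^\mu\to L^-G\backslash L^-Gt^\mu I(1)/J$ over the relevant orbit and applying the rigidity argument of Section \ref{s:proof of thm}.

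\textbf{Openness and affineness (Part (1)).} The Birkhoff decomposition $LG=\bigsqcup_\mu L^-Gt^\mu I(1)$ stratifies $\Bun_J$ by locally closed substacks. By Grothendieck--Harder, every $G$-bundle on $\bP^1$ has a canonical reduction to a dominant coweight, and the closure of the type-$\mu$ stratum in $\Bun_G$ consists only of strata whose type is strictly more dominant than $\mu$. Pulling this closure relation back along the smooth forgetful map $\Bun_J\to\Bun_G$ (with smooth fibre $G(\cO_\infty)/J$), it transfers to $\Bun_J$. Since the minuscule representative of each class in $X_*(T)/\Lambda^\vee$ is minimal in this order, its stratum is open in the corresponding component of $\Bun_J$. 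For the affineness when $\mu=0$, the pullback of the determinant-of-cohomology line bundle on $\Bun_G$ along a faithful representation has a canonical section vanishing precisely at bundles with non-zero $H^1$, i.e.\ the non-trivial ones; its non-vanishing locus is therefore exactly $L^-G\backslash L^-GI(1)/J$, which is affine as the complement of a Cartier divisor in a component of $\Bun_J$.

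\textbf{Closedness (Part (2)).} Proposition \ref{p:minuscule cosets} yields a surjective map
\[\phi:A^\mu\longrightarrow L^-G\backslash L^-Gt^\mu I(1)/J,\qquad a\longmapsto L^-Gt^\mu aJ,\]
arising from the locally closed inclusion $A^\mu\hookrightarrow I(1)$, so closedness of the relevant orbit in the stratum reduces to showing $\phi^{-1}(L^-Gt^\mu J)=\{1\}$. Suppose $L^-Gt^\mu aJ=L^-Gt^\mu J$ for some $a\in A^\mu$. Then the stabilizer $\Ad_{a^{-1}t^{-\mu}}L^-G\cap J$ is, up to $J$-conjugation, equal to $\Ad_{t^{-\mu}}L^-G\cap J=\Ad_{t^{-\mu}}U_\mu$, on which $\chi$ is trivial by \S\ref{ss:relevant}. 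Hence $\chi$ is also trivial on $\Ad_{a^{-1}t^{-\mu}}L^-G\cap J$ (characters are conjugation-invariant), so $L^-Gt^\mu aJ$ is relevant. By Part (iii) of Section \ref{s:proof of thm}, this forces $a=1$; since $\{1\}$ is closed in $A^\mu$, the relevant orbit is closed.

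\textbf{Main obstacle.} The most delicate step is the openness in (1): transferring the Grothendieck--Harder closure order from $\Bun_G$ to $\Bun_J$ requires care with the level structure at $\infty$, and constructing the explicit defining section in the affineness claim requires choosing a representation whose determinant-of-cohomology section actually detects non-triviality on the neutral component. The computation of $\phi^{-1}(L^-Gt^\mu J)$ in (2) is comparatively clean once one trusts that the Birkhoff parametrization $I(1)=(\Ad_{t^{-\mu}}U_\mu)A^\mu J$ presents the stratum as the expected groupoid quotient.
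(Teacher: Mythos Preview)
Your argument for Part~(1) openness via $\Bun_G$ has a genuine gap. The preimage in $\Bun_J$ of the Harder--Narasimhan stratum indexed by a dominant minuscule $\mu$ is $L^-G\,t^\mu G(\cO)/J$, and this equals the union $\bigcup_{\mu'\in W\mu}L^-G\,t^{\mu'}I(1)/J$ of Birkhoff cells over the entire Weyl orbit, not the single cell for $\mu$. (Indeed $w\in G(k)\subset L^-G$ gives $L^-Gt^{w\mu}=L^-Gwt^{\mu}\subset L^-Gt^\mu G(\cO)$.) So openness of the HN stratum only shows this union is open; for nonzero minuscule $\mu$ the orbit $W\mu$ is nontrivial and you have not separated out the dominant representative. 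The paper avoids this by working instead with the map $\Bun_J\to\Bun_I$ to the affine flag variety, where the $L^-G$-orbits are already indexed by $X_*(T)$, and proving directly (Lemma~\ref{l:minuscule orbits on Fl_G} via a stabilizer-dimension count and semicontinuity) that $L^-Gt^\mu I/I\subset\Fl_G$ is open precisely when $\mu$ is minuscule. Your affineness argument for $\mu=0$ is essentially the paper's.

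Your argument for Part~(2) also has a gap. You correctly show that the set-theoretic fibre $\phi^{-1}(\text{relevant orbit})=\{1\}$, but the inference ``$\{1\}$ closed in $A^\mu$ $\Rightarrow$ relevant orbit closed'' requires $\phi$ to be open (or a quotient map), which you have not established. The map $\phi$ factors as a closed immersion $A^\mu\hookrightarrow I(1)$ followed by a quotient, so it is not automatically open; calling $A^\mu$ an ``atlas'' presumes smoothness of $\phi$, i.e.\ transversality of $A^\mu$ to the $(\Ad_{t^{-\mu}}U_\mu)\times J$-orbits at every point, and Proposition~\ref{p:minuscule cosets} only gives the surjectivity of the product map, not uniqueness or smoothness. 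The paper takes a completely different route: it works in the finite-dimensional quotient $I(1)/I(1+h/2)\simeq U_{\leq h/2}(k)\ltimes U^-_{\leq-h/2}(kt)$ and proves directly (Lemma~\ref{l:lower part of S(1)}) that the image $p_-(\overline{S(1)})$ is a closed affine subspace cut out by explicit polynomial equations, from which closedness of $U_{\leq h/2}(k)\overline{S(1)}$ and hence of the relevant orbit follows. This is an explicit computation with the Coxeter torus and does not invoke the rigidity results of Section~\ref{s:proof of thm} at all.
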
 
The rest of this section concerns the proof of this proposition. For (1), note that since $J\subset I$, we have map $\Bun_J\ra\Bun_I=[L^-G\backslash LG/I]$. Thus the openness of 
\[
L^-G\backslash L^-G t^\mu I(1)/J=L^-G\backslash L^-G t^\mu I/J\subset\Bun_J
\]
 for minuscule $\mu$ follows from Corollary \ref{c:open orbits on Fl_G}. Moreover we know from \cite[Lemma 3.1]{YunMotive} or \cite[Corollary 1.3]{HNY13} that the embedding of the trivial bundle 
\[
[L^-G\backslash L^-G\cdot G(\cO)/G(\cO)]\subset\Bun_{G(\cO)}
\] is defined by the non-vanishing of a section of a line bundle. Thus the same holds for the embedding of its preimage into $\Bun_J$. Thus, 
\[
L^-G\backslash L^-G\cdot G(\cO)/J=L^-G\backslash L^-G\ G(k)G(tk[\![t]\!])I(1)/J=L^-G\backslash L^-G I(1)/J\subset\Bun_J,
\]
is also affine and open.

To prove (2), we need some notation. For simplicity, we assume that $h$ is even. The odd $h$ case requires only minor modifications. For a subgroup $K\subset I(1)$, we denote its image in $I(1)/I(1+h/2)$ by $\overline{K}$. Denote
$$
U_{\leq h/2}(k):=\prod_{1\leq\Ht\alpha\leq h/2} U_\alpha(k),\quad U^-_{\leq-h/2}(kt):=\prod_{\Ht\alpha\leq-h/2}U_\alpha(kt).
$$

Regarding $U_{\leq h/2}(k)$ and $U^-_{\leq-h/2}(kt)$ as subgroups in $I(1)/I(1+h/2)$, we have 
$$
I(1)/I(1+h/2)\simeq U_{\leq h/2}(k)\ltimes U^-_{\leq-h/2}(kt)
$$ 
where $U^-_{\leq-h/2}(kt)$ is a commutative subgroup. Thus $\fu^-_{\leq-h/2}\simeq\Lie(U^-_{\leq-h/2})(kt)\simeq U^-_{\leq-h/2}(kt)$ in notations of \S\ref{sss:cyclic grading}. Denote the projection from $I(1)/I(1+h/2)$ to $U_{\leq h/2}(k)$ (resp. $U^-_{\leq-h/2}(kt)$) by $p_+$(resp. $p_-$). From Proposition\ref{p:minuscule cosets}, we can see that $I(1)/I(1+h/2)=U_{\leq h/2}(k)A(S(1)/S(1+h/2))=U_{\leq h/2}A\overline{S(1)}$.

Returning to the proposition,  let $\mu=1$. Since $L^-G\cap I(1)=U(k)$, we have $L^-G\backslash L^-G I(1)/J=U(k)\backslash I(1)/J$. From Lemma \ref{l:lower part of S(1)} (below), we see that $U_{\leq h/2}(k)S(1)$ is closed in $I(1)/I(1+h/2)$ and $U_{\leq h/2}(k)$ is closed in $I(1)/(S(1)I(1+h/2))=I(1)/J$. Since $U_{\leq h/2}(k)$ is the preimage of the unit coset $\overline{1}\in U(k)\backslash I(1)/J$ under the quotient map from $I(1)/J$, we see $\overline{1}\in U(k)\backslash I(1)/J$ is closed. This proves the statement for $\mu=1$. For other minuscule coweights, it suffices to deduce from the $\mu=1$ case by applying the conjugation by $t^{-\mu}w_P$ where $w_P$ is defined in Lemma \ref{l:w_P0w_0}.  To finish the proof of the proposition, it remains to prove the following: 

\begin{lem}\label{l:lower part of S(1)}
	There exists a subset of roots $\Phi_S\subset\Phi(U^-_{\leq-h/2})=:\Phi_{\leq-h/2}$ and polynomials $f_\beta\in k[x_\alpha\mid\alpha\in\Phi_S]$ for each $\beta\in\Phi_{\leq-h/2}-\Phi_S$ such that
	$$
	p_-(\overline{S(1)})=\{\sum_{\alpha\in\Phi_S}\lambda_\alpha E_\alpha+\sum_{\beta\in\Phi_{\leq-h/2}-\Phi_S}f_\beta(\lambda_\alpha)E_\beta\mid\lambda_\alpha\in k,\forall \alpha\in\Phi_S \}\subset\fu^-_{\leq-h/2}\simeq U^-_{\leq-h/2}(kt)
	$$
	In particular, $p_-(\overline{S(1)})\simeq\bA^{|\Phi_S|}$ is closed in $U^-_{\leq-h/2}(kt)$.
\end{lem}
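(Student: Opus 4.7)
The plan is to use the Moy--Prasad filtration $S(1) \supset S(2) \supset \cdots \supset S(1+h/2)$ whose successive quotients embed as $\fz_r \subset \fg_r \simeq I(r)/I(r+1)$, together with Lemma \ref{l:p_-(z_r)} which identifies $p_-(\fz_r) = \ker(\ad_{N^-}|_{\fg(r-h)})$. Since $p > h$, ordered multiplication of exponentials gives a scheme isomorphism
$$\prod_{r=1}^{h/2} \fz_r \xrightarrow{\sim} \overline{S(1)}, \qquad (Z_1, \ldots, Z_{h/2}) \longmapsto \exp(Z_1) \exp(Z_2) \cdots \exp(Z_{h/2}) \bmod I(1+h/2),$$
where each $Z_r$ is lifted to a representative in $I(r)$.

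Next, I would trace what happens when we post-compose with $\overline{S(1)} \hookrightarrow I(1)/I(1+h/2) \xrightarrow{p_-} \fu^-_{\leq -h/2} \simeq \bigoplus_{r=1}^{h/2} \fg(r-h)$. Because the Moy--Prasad filtration on $I(1)$ is additive ($[I(r_1), I(r_2)] \subset I(r_1+r_2)$), every Baker--Campbell--Hausdorff correction landing in $I(r)/I(r+1)$ is an iterated bracket of $Z_{i_1}, \ldots, Z_{i_k}$ with $\sum i_j = r$ (in particular each $i_j < r$). Consequently the $\fg(r-h)$-component of $p_-(\exp(Z_1) \cdots \exp(Z_{h/2}))$ equals $p_-(Z_r)$ plus a polynomial with $k$-coefficients in the entries of $Z_1, \ldots, Z_{r-1}$. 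The resulting morphism $\prod_r \fz_r \to \bigoplus_r \fg(r-h)$ is therefore block lower-triangular, with diagonal blocks the linear isomorphisms $\fz_r \xrightarrow{\sim} \ker(\ad_{N^-}|_{\fg(r-h)})$ from Lemma \ref{l:p_-(z_r)}.

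Third, to produce $\Phi_S$, for each $r$ I would choose a subset $\Phi_S^r$ of $\{\alpha \in \Phi \mid \Ht(\alpha) = r - h\}$ of cardinality $k_r = \dim \fz_r$ such that the composite $\ker(\ad_{N^-}|_{\fg(r-h)}) \hookrightarrow \fg(r-h) \twoheadrightarrow \bigoplus_{\alpha \in \Phi_S^r} \fg_\alpha$ is a linear isomorphism (i.e.\ pick pivots in the inclusion), and set $\Phi_S := \bigsqcup_r \Phi_S^r$. Reading coefficients $\lambda_\alpha$ in the root basis of $\fu^-_{\leq -h/2}$, the block lower-triangularity shows that $(\lambda_\alpha)_{\alpha \in \Phi_S}$ are free parameters: inductively in $r$, the $\Phi_S^r$-coordinates of the image determine $Z_r$ once $Z_1, \ldots, Z_{r-1}$ are fixed. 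For $\beta \in \Phi_{\leq -h/2} \setminus \Phi_S$ of height $r-h$, the coordinate $\lambda_\beta$ is a linear function of the $\Phi_S^r$-coordinates (from the kernel condition) plus a polynomial in the $\Phi_S^{r'}$-coordinates with $r' < r$ (from BCH); substituting out the $Z_i$'s via the inverse diagonal isomorphisms yields a polynomial $f_\beta \in k[\lambda_\alpha \mid \alpha \in \Phi_S]$. This realises $p_-(\overline{S(1)})$ as the graph of a polynomial map $\bA^{|\Phi_S|} \to \prod_{\beta \notin \Phi_S} \fg_\beta$, hence as a closed subscheme of $\fu^-_{\leq -h/2}$ isomorphic to $\bA^{|\Phi_S|}$.

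The step I expect to be the main obstacle is the bookkeeping above: the identification $I(r)/I(r+1) \simeq \fg_r$ uses the cyclic grading on $\fg$ (where $[\fg_i,\fg_j] \subset \fg_{i+j \bmod h}$), whereas the triangularity argument needs the additive behaviour of the Moy--Prasad filtration. Resolving this tension requires checking carefully that, modulo $I(1+h/2)$ and with all indices in $\{1, \ldots, h/2\}$, the cyclic wraparound $\fg_{r+h} = \fg_r$ never materialises in the BCH expansion, so that brackets genuinely raise the Moy--Prasad level additively.
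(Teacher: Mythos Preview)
Your approach is essentially the paper's: parametrise $\overline{S(1)}$ by $\bigoplus_{r=1}^{h/2}\fz_r$ via exponentials, observe that the composite with $p_-$ is block lower-triangular with diagonal blocks $\fz_r\xrightarrow{\sim}\ker(\ad_{N^-}|_{\fg(r-h)})$ (Lemma~\ref{l:p_-(z_r)}), choose $\Phi_S$ by picking pivots in each kernel, and invert the triangular system by induction on $r$. The paper happens to use a single exponential $\exp(\sum_r v_r)$ rather than your ordered product $\prod_r\exp(Z_r)$, but this is cosmetic.

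One step is underjustified: your ``Consequently''. The BCH analysis you give controls the log $W$ of $\exp(Z_1)\cdots\exp(Z_{h/2})$, showing $W_r=Z_r+(\text{poly in }Z_{<r})$. But $p_-$ is the projection in the semidirect product $U_{\leq h/2}(k)\ltimes U^-_{\leq-h/2}(kt)$, not the map ``take the $\fg(r-h)$-part of the log''. The paper bridges this by computing $p_-(\exp W)=\exp(-W^+)\exp(W)$ via a \emph{second} BCH, obtaining $\exp\bigl(W^-+\sum_j c_j\,\ad_{W^+}^j(W^-)\bigr)$; the correction terms again land in MP degree $\geq 2$ and involve only $W_{<r}$, so the $\fg(r-h)$-component is $W_r^-+(\text{poly in }W_{<r})=p_-(Z_r)+(\text{poly in }Z_{<r})$ as you claim. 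You should insert this computation.

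Your identified ``main obstacle'' is a red herring. The Moy--Prasad filtration on $I(1)$ is genuinely additive, $[I(a),I(b)]\subset I(a+b)$, with no cyclic wraparound; the cyclic grading on $\fg$ only enters when identifying the subquotients $I(r)/I(r+1)\simeq\fg_{r\bmod h}$. Since you work modulo $I(1+h/2)$ and all relevant MP degrees lie in $\{1,\dots,h/2\}$, nothing of degree $\geq h$ survives and the issue never arises. The actual subtlety is the one above: relating the group-theoretic projection $p_-$ to the Lie-algebraic height decomposition.
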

\begin{proof}
Recall that $S(1)=I(1)\cap\Stab_{G(F)}(t^{-1}N^-+t^{-2}E_\theta)$ and $\fz_r=\Stab_{\fg_r}(N^-+E_\theta)$. We take the obvious choice of the section of $\fz_r\subset\fg_r\simeq\Lie(I(r)/I(r+1))$ in $\Lie(I(r))$, which we still denote by $\fz_r$. Then $\exp(\fz_r)\subset S(1)$ for any $r>0$. 

Claim: $\exp(\oplus_{r=1}^{h/2}\fz_r)=S(1)/S(1+h/2)$ as subgroups in $I(1)/I(1+h/2)$. First, the inclusion $\exp(\oplus_{r=1}^{h/2}\fz_r)\subset S(1)/S(1+h/2)$ is clear from definition. Conversely, given any $s\in S(1)$, consider its image $\overline{s}$ in $S(1)/S(2)\simeq\fz_1\subset \fg_1\simeq I(1)/I(2)$. Thus $\overline{s}=\exp(v_1)\in I(1)/I(2)$ for some $v_1\in\exp(\fz_1)$ and $\exp(-v_1)s\in S(2)$. Similarly, there is $v_2\in\fz_2$ such that $\exp(-v_1-v_2)s\in S(3)$. Inductively, this proves the claim.

Now given any $\overline{s}=\exp(\sum_{r=1}^{h/2}v_r)\in S(1)/S(1+h/2)$ where $v_r\in\fz_r$. Recall $\fz_r\subset\fg_r=\fg(r)\oplus\fg(r-h)$. Denote the projections of $v_r$ to $\fg(r)$ and $\fg(r-h)$ by $v_r^+,v_r^-$ respectively. Denote $v=\sum_{r=1}^{h/2}v_r$, $v^+=\sum_{r=1}^{h/2}v_r^+$, $v^-=\sum_{r=1}^{h/2}v_r^-$. We will see below that $\exp(-v^+)\overline{s}=p_-(\overline{s})$. In fact, by Baker-Campbell-Hausdorff formula \footnote{This is applicable because we are working in the finite quotient $I(1)/I(1+h/2)$ in which the exponential map is algebraic.}, we have
\begin{equation}\label{eq:p_-(s)}
\begin{split}
\exp(-v^+)\overline{s}&=\exp(-v^+)\exp(v^++v^-)\\
&=\exp(v^-+\frac{1}{2}[-v^+,v^++v^-]+\frac{1}{12}[-v^+,[-v^+,v^++v^-]]+\cdots)\\
&=\exp(v^-+\sum_{j=1}^{h/2}c_j\ad_{v^+}^j(v-))=p_-(\overline{s})
\end{split}
\end{equation}

In the above last step, $\ad_{v^+}^j(v-)\in\Lie(I(1+h/2))$ when $j>h/2$\footnote{The denominator of one iterated Lie bracket in the Campbell-Hausdorff formula is a product of numbers smaller or equal to the number of Lie brackets in this term plus one, thus here the denominator has prime factors at most $h/2+1$. This is covered by our assumption on the characteristic.}. Thus we want to show that when $v$ vary, the quotient of $v^-+\sum_{j=1}^{h/2}c_j\ad_{v^+}^j(v-)$ in $\fu^-_{\leq-h/2}$ is as in the statement of lemma. 

Now we construct the subset of roots $\Phi_S$ in the statement. For each $1\leq r\leq h/2$, recall from Lemma \ref{l:p_-(z_r)} that $v^-\in\ker(\ad_{N^-}|_{\fg(r-h)})=p_-(\fz_r)$. We have seen in the proof of Lemma \ref{l:p_-(z_r)} that $\dim\ker(\ad_{N^-}|_{\fg(r-h)})=\dim\fz_r=\#\{e_j=r\}$, i.e. the number of exponents that equal $r$. Thus $\dim\ker(\ad_{N^-}|_{\fg(r-h)})=1$ whenever it is not zero, except in type $D$ with even rank where the dimension is two. We choose a basis $0\neq Y_r\in\ker(\ad_{N^-}|_{\fg(r-h)})$ for each $r$ that equals some exponent, and in the type $D$ case with even rank case we choose a basis $Y_{h/2}^1,Y_{h/2}^2\in\ker(\ad_{N^-}|_{\fg(-h/2)})$. For each $Y_r=\sum_{\alpha\in\Phi(\fg(r-h))}\mu_\alpha E_\alpha$, we choose a root $\beta_r\in\Phi(\fg(r-h))$ such that $\mu_{\beta_r}\neq0$. For the type $D$ exception, write $Y_{h/2}^i=\sum_{\alpha\in\Phi(\fg(-h/2))}\mu^i_\alpha E_\alpha$ for $i=1,2$. Since $Y_{h/2}^1, Y_{h/2}^2$ are linearly independent, we can find two roots $\beta_{h/2}^1,\beta_{h/2}^2$ such that the components of $Y_{h/2}^1, Y_{h/2}^2$ in these two root subspaces are also linearly independent (because the matrix of $\mu_\alpha^1,\mu_\alpha^2$'s has rank two). Let $\Phi_S$ be the union of these $\beta_r$ and $\beta_{h/2}^i$.

Let $v^-=\sum_{1\leq r=e_j\leq h/2}\lambda_r Y_r$. From Lemma \ref{l:p_-(z_r)}, we know that the projection from $v=v^++v^-$ to $v^-$ is an isomorphism. Thus $v^+$ is determined by $v^-$ and we can write $v^+$ as a linear combination of root vectors $E_\alpha$ whose coefficients are polynomials in $\lambda_r$'s. More precisely, $v^+$ is a linear combination of root vectors $E_\alpha$ where the height of $\alpha$ is some exponent $1\leq\Ht(\alpha)=e_j\leq h/2$, and the coefficient of $E_\alpha$ in $v^+$ is a polynomial in $\lambda_{e_j}$. Thus we can write the last term in \eqref{eq:p_-(s)} as
\begin{equation}\label{eq:p_-(s) 2}
p_-(\overline{s})=v^-+\sum_{j=1}^{h/2}c_j\ad_{v^+}^j(v^-)=\sum_{\alpha\in\Phi(\fu^-_{\leq-h/2})}f_\alpha E_\alpha
\end{equation}

where $f_\alpha$ is a polynomial in $\lambda_{e_j}$ for exponents $1\leq e_j\leq \Ht\alpha$. Let $\lambda_\alpha:=f_\alpha$ for $\alpha\in\Phi_S$. It remains to show that 
\begin{itemize}
	\item $f_\alpha$ for $\alpha\in\Phi_{\leq-h/2}-\Phi_S$ can be expressed as polynomials in $\lambda_\alpha$ for $\alpha\in\Phi_S$,
	\item the change of variables from $\lambda_{e_j}$'s to $\lambda_\alpha$ for $\alpha\in\Phi_S$ is invertible and algebraic.
\end{itemize}

We show this by induction, i.e. we show that the above is true when we restrict to $\Phi_{\leq R-h}$ for any $1\leq R\leq h/2$. This means for any $\beta\in\Phi_{\leq R-h}$, $f_\beta$ is a polynomial of $\lambda_\alpha$ for $\alpha\in\Phi_S\cap\Phi_{\leq R-h}$, and the change of variables from $\lambda_{e_j}$ for $1\leq e_j\leq R$ and $\lambda_\alpha$ for $\alpha\in\Phi_S\cap\Phi_{\leq R-h}$ has an inverse defined by polynomials.

First when $R=1$ we have $\lambda_1=f_{-\theta}=\lambda_{-\theta}$ and the statement is true. Next assume the statement is true on $\Phi_{\leq R-h}$, $1\leq R<h/2$. 
Note from the formula \eqref{eq:p_-(s) 2}
that $f_\alpha$ is always a polynomial in $\lambda_{e_j}$ for $1\leq e_j-h<\Ht(\alpha)$, together with a linear term in $\lambda_{\Ht(\alpha)+h}$ when $\Ht(\alpha)+h$ is an exponent. Thus for any $\alpha$ with $\Ht(\alpha)=R+1-h$, if $R+1$ is not an exponent, then the statement is clear on $\Phi_{\leq R+1-h}$ by the induction hypothesis. If $R+1=e_j$ is an exponent, first assume we are not in the exceptional case of type $D$ with even rank and $R+1=h/2$. Recall $\Phi_S\cap\Phi(\fg(R+1-h))=\{\beta_{R+1}\}$. From the definition of $\beta_{R+1}$ and by the induction hypothesis, we have
$$
\lambda_{\beta_{R+1}}=f_{\beta_{R+1}}=c\lambda_{e_j}+g,\quad c\neq 0,\ g\in k[\lambda_\alpha\mid\alpha\in\Phi_S\cap\Phi_{\leq R-h}].
$$

Thus the change of variables for $R+1$ is invertible with algebraic inverse. Also from the previous discussion we see that any $f_\alpha$ for $\beta_{R+1}\neq\alpha\in\Phi(\fg(R+1-h))$ is a polynomial in $\lambda_r$ for $r\leq R+1$, thus also a polynomial in $\lambda_{\beta_r}$ for $r\leq R+1$. Thus the statement is proved. 

In the exceptional case of type $D$ with even rank, $R+1=h/2$, $\lambda_{h/2}$ is replaced by two coefficients $\lambda_{h/2}^1,\lambda_{h/2}^2$ of $Y_{h/2}^1,Y_{h/2}^2$. We have system of equations
$$
\begin{cases}
\lambda_{\beta_{h/2}^1}=f_{\beta_{h/2}^1}=\mu^1_{\beta_{h/2}^1}\lambda_{h/2}^1+\mu^1_{\beta_{h/2}^2}\lambda_{h/2}^2+g_1, \quad g_1\in k[\lambda_\alpha\mid\alpha\in\Phi_S\cap\Phi_{\leq-1-h/2}];\\
\lambda_{\beta_{h/2}^2}=f_{\beta_{h/2}^2}=\mu^2_{\beta_{h/2}^1}\lambda_{h/2}^1+\mu^2_{\beta_{h/2}^2}\lambda_{h/2}^2+g_2,  \quad g_2\in k[\lambda_\alpha\mid\alpha\in\Phi_S\cap\Phi_{\leq-1-h/2}].\\
\end{cases}
$$

Recall from the definition of $\beta_{h/2}^i$ that the matrix of coefficients $\mu_{\beta_{h/2}^j}^i$ is invertible. Thus the statement is true in this case. This completes the proof of the lemma.
\end{proof}

\section{Geometric Hecke operators and eigensheaves}\label{s:heckeeigen}

Unless otherwise stated we will base change to an algebraic closure $\overline{k}$ of $k$ in this section without changing notation. 

\subsection{Automorphic sheaves} The character $\chi : J \rightarrow \bQlt$ determines a (rank one) character sheaf $K_{\chi}$ on $J$ that descends to the quotient $L=J/J^{+}$, cf. \cite{Masoud} or \cite[Appendix A]{Yun14}. Let $S$ be any scheme over $k$. We denote by $\cD(S,\chi)$ the derived category of $(J,K_\chi)$-equivariant $\bQl$-complexes on $S \times \Bunpl$ (where the action on $S$ is trivial) and by $\Perv(S,\chi)$ the full abelian subcategory of perverse sheaves. We'll write  $\cD(\chi)$ and $\Perv(\chi)$ for complexes and perverse sheaves on $\Bunpl$. For $\alpha \in \Omega$ we denote complexes and perverses sheaves on $S\times \Bunpl^{\alpha}$ by $\cD(S,\chi)_{\alpha}$ and $\Perv(S,\chi)_{\alpha}$ respectively. We'll denote the relevant orbit in $\Bunpl^{\alpha}$ by $O_{+}^{\alpha}$.


\begin{lem} \label{l:cleanext} \begin{enumerate}
\item For all $\alpha \in \Omega$ the category $\Perv(\chi)_{\alpha}$ contains a unique simple object $\cA_{\alpha}$
which is a clean extension from the relevant orbit $O^{\alpha}_{+}$. 
\item Let $S$ be a scheme of finite type over $k$. The assignment 
\begin{align*}  D^b(S) &\to \cD(S,\chi)_{\alpha} \\ 
K &\mapsto K \boxtimes \cA_{\alpha}
\end{align*}
is an equivalence of categories that is $t$-exact for the perverse $t$-structure.
\end{enumerate}
\end{lem}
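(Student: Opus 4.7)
For part (1), the plan is first to use rigidity (Theorem \ref{t:rigidity}) to show that every object of $\Perv(\chi)_\alpha$ is supported on the closure of the unique relevant orbit $O^\alpha_+$ inside $\Bunpl^\alpha$: on any non-relevant orbit the character $\chi$ restricts nontrivially to the connected component of the stabilizer, so any $(J,K_\chi)$-equivariant complex must have vanishing stalks there. On $O^\alpha_+$ itself, the definition of relevance ensures that $\chi$ is trivial on $S^\circ_{\cE^+}$, so the character sheaf $K_\chi$ descends to a rank-one equivariant local system $\cL_\chi$ on $O^\alpha_+$; a direct inspection of the stabilizer, using the description $\Ad_{t^{-\mu}} L^-G \cap J \subset (\Ad_{t^{-\mu}} U_\mu) \cap I(1+h/2)$ coming from the proof of Theorem \ref{t:rigidity}, shows that $\cL_\chi$ is the unique such local system up to isomorphism. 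I then define $\cA_\alpha$ to be the appropriately shifted IC-extension.

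The next step is to upgrade the IC-extension to a clean one by invoking Proposition \ref{p:geom of relevant orbits}: $O^\alpha_+$ is closed inside an open substack of $\Bunpl^\alpha$, and on the complement of this open substack every $(J,K_\chi)$-equivariant perverse sheaf vanishes by the rigidity argument above. Hence for $j : O^\alpha_+ \hookrightarrow \Bunpl^\alpha$ the natural maps $j_! \cL_\chi \to j_{!*}\cL_\chi \to j_* \cL_\chi$ are all isomorphisms, so $\cA_\alpha$ is clean. Uniqueness of $\cA_\alpha$ as a simple object of $\Perv(\chi)_\alpha$ then follows from the uniqueness of both the relevant orbit and the rank-one equivariant local system on it. The component labelled $\alpha$ can be reduced to the neutral component using the isomorphism $\bfT_\alpha$ of \eqref{eq:infHecke}, which carries the geometric picture on $\Bunpl^0$ verbatim to $\Bunpl^\alpha$.

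For part (2), I would deduce the equivalence from part (1). Fully faithfulness reduces to computing $\Hom^\bullet(\cA_\alpha, \cA_\alpha) = \bQl$ concentrated in degree zero; by the clean-extension description, all Ext groups can be computed on $O^\alpha_+$, and the affineness of the ambient open given by Proposition \ref{p:geom of relevant orbits}.(1) kills the higher Exts. Essential surjectivity uses that every object of $\cD(S,\chi)_\alpha$ is fiberwise supported on $\overline{O^\alpha_+}$ and is determined by its restriction to $S \times O^\alpha_+$, where any equivariant complex with the prescribed character sheaf is automatically of the form $K \boxtimes \cL_\chi$ for a unique $K \in D^b(S)$; combining with cleanness this recovers the whole complex as $K \boxtimes \cA_\alpha$. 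The $t$-exactness is immediate since external product with the perverse sheaf $\cA_\alpha$ over a smooth base with trivial $J$-action is $t$-exact once one absorbs the shift into the normalization of $\cA_\alpha$. The main subtlety I expect is the vanishing of the higher self-Ext groups of $\cA_\alpha$, where the minuscule condition on $\mu$ enters essentially to guarantee affineness of the ambient open via Proposition \ref{p:geom of relevant orbits}, and where one must verify cleanness carefully enough that Ext computations on $\Bunpl^\alpha$ reduce to those on $O^\alpha_+$.
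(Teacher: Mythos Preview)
Your part (1) follows the same strategy as the paper: vanishing on irrelevant orbits from Theorem \ref{t:rigidity}, the locally closed structure from Proposition \ref{p:geom of relevant orbits}, and the identification of the stabilizer as $(\Ad_{t^{-\mu}}U_\mu)\cap I(1+h/2)$ from \S\ref{ss:relevant}. The paper packages this slightly more efficiently: rather than building $\cA_\alpha$ as an IC-sheaf and then upgrading to clean, it observes that the stabilizer $\cU$ is a \emph{connected unipotent} group, so $[O_+^{\alpha}/J]\cong[\mathrm{pt}/\cU]$ and restriction to the point already gives a $t$-exact equivalence of \emph{derived} categories
\[
\cD(\chi)_\alpha \;\cong\; D^b_{(\cU,K_\chi|_\cU)}(\mathrm{pt}) \;\cong\; D^b(\Rep(\pi_0(\cU))) \;\cong\; \gr\mathrm{Vect},
\]
invoking \cite[Lemma A.4.4]{Yun14}. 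Cleanness and uniqueness of the simple then fall out at once.

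The gap is in your part (2). The assertion that ``the affineness of the ambient open given by Proposition \ref{p:geom of relevant orbits}.(1) kills the higher Exts'' is not correct: $O_+^\alpha$ sits as a \emph{closed} substack inside that affine open, and affineness of an ambient space says nothing about Ext groups of sheaves supported on a closed point. What actually forces $\Hom^\bullet(\cA_\alpha,\cA_\alpha)=\bQl$ in degree zero is precisely the fact you already identified in (1), namely that $\cU$ is connected unipotent, so $(\cU,K_\chi|_\cU)$-equivariant cohomology of a point is trivial. Once you record the derived equivalence $\cD(\chi)_\alpha\simeq\gr\mathrm{Vect}$ in (1), part (2) is a one-liner: pullback along $\id_S\times i_{\cE_+}$ is a quasi-inverse to $K\mapsto K\boxtimes\cA_\alpha$, and $t$-exactness is automatic. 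The affineness statement of Proposition \ref{p:geom of relevant orbits}.(1) is used in the paper for the exactness of Hecke operators (Theorem \ref{t:heckeeigen}.(1)), not here.
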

\begin{proof} (1) Using the identification of connected components from \S\ref{sss:comp} it suffices to consider the case $\alpha=0$. We will mostly drop the subscript indicating the component for sake of notation. By Proposition \ref{p:geom of relevant orbits} the embedding $j: O_{+} \hookrightarrow \Bunpl^{0}$ of the relevant orbit is locally closed and factors into a closed embedding $i : O_{+} \hookrightarrow \Bunpl^{\circ}$ followed by the open embedding $ \Bunpl^{\circ}\hookrightarrow \Bunpl^{0}$. Fix a bundle $\cE_{+} \in O_{+}$ and let $\cU$ be its stabilizer under the $J$-action. This is a connected unipotent group by \ref{ss:relevant}. We have an isomorphism of quotient stacks 
\[ \left[O_+ / J \right] \cong \left[\mathrm{pt}/ \cU \right] \]
and by Theorem \ref{t:rigidity} any object $K\in \cD(\chi)$ has vanishing stalks and costalks outside the relevant orbit $O_{+}$. Hence restriction along $j$ and to the point $\cE_{+}$ defines a $t$-exact equivalence
\[ 
\cD(\chi)_0 \cong D^{b}_{(J,K_{\chi})}(O_{+}) \cong D^{b}_{(\cU,K_{\chi}|_{\cU})}(\mathrm{pt}).
\]
The latter category is furthermore equivalent to $D^b(\Rep(\pi_0(\cU))) \cong \gr\mathrm{Vect}$, see \cite[Lemma A.4.4.]{Yun14}. In particular $\Perv(\chi)_{0} \simeq \mathrm{Vect}$ contains a unique simple object $\cA_{0}$.

(2) Let $i_{\cE_{+}}$ be the embedding of $\cE_{+}$ into $\Bunpl$. By (1) pullback along $\id_S \times i_{\cE_{+}}$ is a quasi-inverse. \end{proof}

\begin{defe}  We let $\cA_\chi$ be the perverse sheaf on $\Bunpl$ that is given on the component $\Bunpl^{\alpha}$ by $\cA_{\alpha}=\bfT_{\alpha,!}\cA_{0}$ where $\bfT_{\alpha}: \Bunpl^{0} \cong \Bunpl^{\alpha}$ is the isomorphism described in (\ref{eq:infHecke}).
\end{defe}

\subsection{Geometric Hecke operators}
We recall the definition and some properties of geometric Hecke operators. The Hecke stack $\Hk$ classifies tuples
$(\cE, \cE', x, \phi)$ where $\cE, \cE' \in \Bunpl$, $x\in \bA^1$ and $\phi : \cE|_{X\setminus \{x\}} \xrightarrow{\cong} \cE|_{X\setminus \{x\}}$ is an isomorphism respecting the level structure. The Hecke correspondence is given by

\[\begin{tikzcd}
& \Hk  \ar{dl}[swap]{\hl} \ar{dr}{\pi \times \hr} & \\
\Bunpl & & \bA^1 \times \Bunpl 
\end{tikzcd}\]

where $\hl$ and $\hr$ map $(\cE, \cE', x, \phi)$ to $\cE$ and $\cE'$ respectively and $\pi$ maps it to $x$.
\begin{rem} The morphism $\pi \times \hr$ is a locally trivial fibration with fibers non-canonically isomorphic to the affine Grassmannian. In particular it is ind-proper. Similarly $\hl$ is a locally trivial fibration with fibers isomorphic to the Beilinson-Drinfeld Grassmannian. 
\end{rem}

The geometric Satake equivalence associates to each $V \in \Rep(\hG)$ an $L^+G \rtimes \Aut(k[[t]])$-equivariant intersection cohomology sheaf $\IC_V$ on the affine Grassmannian $\Gr_G$. This in turn determines a sheaf $\IC_V^{\Hk}$ on $\Hk$ which is isomorphic to the intersection cohomology sheaf along the fibers of $\pi \times \hr$. 
\begin{defe} For $V\in \Rep(\hG)$ the geometric Hecke operator $\bfT_V$ is the functor 
\[ \bfT_V = (\pi \times \hr)_! (\hr^*( - ) \otimes \IC_V ) : \cD(\chi) \rightarrow \cD(\bA^1, \chi). \]
\end{defe}

More generally for any finite set $I$ there is an iterated Hecke stack $\Hk_I$ classifying simultaneous modifications of bundles at sets of points indexed by $I$. Using $\Hk_I$ one defines for any $V_I\in \Rep(\hG^I)$ and any $k$-scheme $S$ the iterated Hecke operators 
\[\bfT_{V_{I},S} : \cD(S,\chi) \rightarrow D^b_{(J,K_\chi)}((\bA^1)^{I} \times S \times \Bunpl). \]
They are functorial in $V_{I}$ and carry factorization structures: 
\begin{enumerate}
\item Denoting by $\mathrm{triv}\in \Rep(\hG^I)$ the trivial representation, there is a canonical isomorphism $\bfT_{\mathrm{triv},S}(K) \cong K\boxtimes \bQl$.
\item For two finite sets $I,J$ and representations $V_I\in \Rep(\hG)$, $V_J\in \Rep(\hG)$ we have a canonical isomorphism
\[\bfT_{{V_I}\boxtimes {V_J},S} \cong \bfT_{V_{I}, (\bA^1)^{J} \times S} \circ \bfT_{V_{J},S}. \]
\item For any surjection $J \twoheadrightarrow I$ we have diagonal maps $(\bA^1)^{I} \hookrightarrow (\bA^1)^{J}$ and $\hG^{I} \hookrightarrow \hG^{J}$ and there is a canonical isomorphism
\[\bfT_{V_J,S} |_{(\bA^1)^{I} \times S \times \Bunpl} \cong \bfT_{V_{J}|_{\hG^{I} },S}. \]
\end{enumerate}
The above isomorphisms are subject to compatibility conditions for which we refer to \cite{Ga07}. 

\begin{defe} A Hecke eigensheaf with eigenvalue $E$ for the Airy automorphic datum is a triple $(A,E,\varphi)$ where $A \in \Perv(\chi)$, 
$E$ is a $\hG$-local system on $\bA^1$ and $\varphi$ is a collection of isomorphisms 
$$
\varphi_V: \bfT_V(A) \cong E_V \boxtimes A
$$
 compatible with the symmetric tensor structure on $\Rep(\hG)$ and the factorization structure of iterated Hecke operators indicated above. For details we again refer to \cite{Ga07}.
\end{defe}

\begin{thm} \label{t:heckeeigen}
\begin{enumerate}
\item For any $V\in \Rep(\hG)$ the functor $\bfT_V(-)[1]: \cD(\chi) \rightarrow \cD(\bA^1,\chi)$ is exact for the perverse $t$-structure. 
\item The perverse sheaf $\cA_\chi$ can be extended to a Hecke eigensheaf with eigenvalue $\mathrm{Ai}_{\hG}$ that descends to the finite field $k$ and such that $\mathrm{Ai}_{\hG,V}$ is a semisimple local system for each $V$ in $\Rep(\hG)$
\end{enumerate}
\end{thm}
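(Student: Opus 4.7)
The plan is to apply the framework of rigid automorphic data developed in \cite{HNY13} and axiomatized in \cite{Yun14}, in the form packaged in \cite[Appendix A]{JY20}. The starting point is Lemma \ref{l:cleanext}: after base change to $\overline{k}$, the equivalence $\cD(S,\chi)_\alpha \simeq D^b(S)$ given by external product with $\cA_\alpha$ reduces essentially all questions about Hecke operators acting on $\cD(\chi)$ to questions about complexes on $\bA^1$, or more generally on $(\bA^1)^I$ for iterated operators.

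For part (i), I would deduce perverse exactness of $\bfT_V[1]$ from two geometric inputs. First, $\pi \times \hr$ is ind-proper, so its shriek and star pushforwards agree and are in particular perverse right-exact. Second, by Proposition \ref{p:geom of relevant orbits} the relevant orbit is closed in an affine open substack of each $\Bunpl^\alpha$; since every object of $\Perv(\chi)$ is a clean extension from a relevant orbit by Lemma \ref{l:cleanext}(1), its support lies inside that affine open, and after restriction the relevant pushforward becomes affine, hence perverse left-exact. Combined with perverse t-exactness of $\hl^*$ shifted by the smooth relative dimension, and the shift $[1]$ coming from the $\bA^1$-factor, this yields the claim.

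For part (ii), by (i) each $\bfT_V(\cA_\chi)[1]$ is perverse on $\bA^1 \times \Bunpl$ and $(J,K_\chi)$-equivariant, so Lemma \ref{l:cleanext}(2) applied with $S = \bA^1$ gives a canonical decomposition
\begin{equation*}
\bfT_V(\cA_\chi)[1] \;\cong\; \bigoplus_{\alpha \in \Omega} \mathrm{Ai}_{\hG,V}^{\alpha} \boxtimes \cA_\alpha,
\end{equation*}
for uniquely determined perverse sheaves $\mathrm{Ai}_{\hG,V}^\alpha \in \Perv(\bA^1)$. The infinitesimal Hecke isomorphisms $\bfT_\alpha$ of \eqref{eq:infHecke} commute with $\bfT_V$ and identify the various summands with one another, yielding a single candidate eigenvalue $\mathrm{Ai}_{\hG,V}$. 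The compatibilities (1)--(3) for the iterated operators $\bfT_{V_I,S}$ transport under this identification to the structure of a symmetric monoidal functor $\Rep(\hG) \to \Perv(\bA^1)$ with the coherences required by \cite{Ga07}.

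The technical heart of part (ii)---and, I expect, the main obstacle---is showing that each $\mathrm{Ai}_{\hG,V}$ is a (shifted) local system on $\bA^1$ and not a more general perverse sheaf. Following \cite{HNY13}, a standard factorization argument using properties (2) and (3) of the iterated Hecke operators forces $\mathrm{Ai}_{\hG,V}[-1]$ to be locally constant on $\bA^1$; Tannakian reconstruction applied to the resulting tensor functor $V \mapsto \mathrm{Ai}_{\hG,V}[-1]$ then produces the sought-after $\hG$-local system $\mathrm{Ai}_{\hG}$. Semisimplicity follows from purity: $\cA_\chi$ is pure (it is the clean extension of a rank-one Artin--Schreier pull-back), the $\IC_V$ are pure, and pushforward along the ind-proper map $\pi \times \hr$ preserves purity, so Gabber's decomposition theorem gives semisimplicity after base change to $\overline{k}$. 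Finally, descent to $k$ is obtained by tracking the natural Frobenius structures on $\cL_\psi$, on the various $\IC_V$, and on the Hecke correspondence through every step of the construction.
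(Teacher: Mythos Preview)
Your proposal is correct and follows essentially the same route as the paper: both parts rest on the clean-extension description from Lemma~\ref{l:cleanext}, the affineness input from Proposition~\ref{p:geom of relevant orbits} for perverse exactness, the infinitesimal Hecke operators $\bfT_\alpha$ to identify the eigenvalue components, and the standard factorization-plus-purity argument (which the paper packages by citing \cite[Lemma 4.4.6]{Yun14} and \cite[Appendix A]{JY20}) for local constancy, semisimplicity, and descent. The only cosmetic difference is that the paper tracks the central-character shift $\alpha \mapsto \alpha+\nu$ explicitly when applying $\bfT_V$ to $\cA_\alpha$, whereas you absorb it into the indexing of your direct-sum decomposition.
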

\begin{proof} 
(1) The proof is similar to the proof of \cite[Theorem 3.8.]{Yun16} and \cite[Remark 4.2.]{HNY13}. Since the connected components of $\Bunpl$ are all isomorphic, we will restrict to the case of a perverse sheaf supported on the neutral component $\Bunpl^{0}$. By Lemma \ref{l:cleanext} any such perverse sheaf $A$ is a clean extension from the locally closed relevant orbit. In particular it is of the form $A=j_*B\cong j_!B$ for a perverse sheaf $B$ on the generic open locus $j:\Bunpl^{\circ}\hookrightarrow \Bunpl^{0}$. The key-point of the proof is that for the pre-image $j': \Hk^{\circ} \hookrightarrow \Hk$ of $\Bunpl^{\circ}$ under $\hl$ the morphism $(\pi \times \hr) \circ j'$ is ind-affine. This follows from Proposition \ref{p:geom of relevant orbits} and for further details we refer to the aforementioned references.

(2) Let $\nu$ be the central character of $V$. By Lemma \ref{l:cleanext} for any $\alpha\in \Omega$ we have $\bfT_V(\cA_{\alpha})[1] = \cF_{\alpha}^{V}[1] \boxtimes \cA_{\alpha+\nu}$ for some perverse sheaf $\cF_{\alpha}^{V}[1]$ on $\bA^1$. The argument from \cite[\S 4.2.]{HNY13} using the isomorphisms in (\ref{eq:infHecke}) shows that $\cF_{\alpha}^{V}[1]$ is independent of $\alpha$ and we will drop this index. Proving that each $\cF^{V}$ is a semisimple local system now works the same as in the proof of \cite[Lemma 4.4.6.]{Yun14}. For descent to the finite field $k$ see the next remark. 
\end{proof}

\begin{rem}
The situation fits into the general framework of \cite[Appendix A.2.]{JY20}. The category $\Perv(\chi)$ is a factorizable $\Rep(\hG)$-module category with coefficients in the category of local systems on $\bA^1$ in the sense of \cite[Appendix A.2.]{JY20}. Since the Hecke operators mix up connected components transitively the category $\Perv(\chi)$ is necessarily indecomposable as $\Rep(\hG)$-module category. The existence of the Hecke eigenvalue is therefore guaranteed by \cite[Theorem A.4.1.]{JY20}, see also \cite[Remark 4.2.4.(4)]{JY20}. Note that even though the category $\Perv(\chi)$ may contain infinitely many distinct simple objects (one on each connected component of $\Bunpl$), we have seen that $\bfT_V(\cA_{\alpha}) = \cF^{V} \boxtimes \cA_{\alpha+\nu}$ (here $\nu$ denotes the central character of $V$) for a local system $\cF$ independent of $\alpha$. In this situation  \cite[Theorem A.4.1.]{JY20} is still applicable. In particular since the embedding of the relevant orbit $O_{\alpha}^{+} \hookrightarrow \Bunpl^{\alpha}$ is defined over $k$, so is the automorphic sheaf $\cA_\chi$. We can then use the induced Frobenius-equivariant structure on the $\Rep(\hG)$-module category $\Perv(\chi)$ to obtain a Weil structure on $\mathrm{Ai}_{\hG}$, see \cite[Remark A.4.3.]{JY20}.
\end{rem}

\subsection{Rigidity of Airy sheaves} \label{ss:Conjectures}
Let us first recall the definition of rigidity of $\hG$-local systems, compare to \cite[\S 3.2.4]{Yun14}.  Let $j: U\hookrightarrow X=\bP^1_{k}$ be an open subset and fix a geometric point $u\in U$. Let $\cL$ be a ${\hG}$-local system on $U$ and let $\pi_{1}(U,u)$ be the étale fundamental group of $U$. 

Let $\widehat{\fg}^{\mathrm{der}}$ be the derived Lie algebra of $\widehat{\fg}$ and consider the representation
\[\Ad : \hG(\bQl) \rightarrow \GL(\widehat{\fg}^{\mathrm{der}}). \]
The local system $\cL$ corresponds to a homomorphism $\rho: \pi_{1}(U,u) \to \hG(\bQl)$ and we denote by $\cL^{\Ad}$ the local system corresponding to the composition $\Ad\circ \rho$.
\begin{defe}
We say that $\cL$ is \textup{cohomologically rigid} if 
\[\textup{H}^1 (X, j_{!*}\cL^{\Ad}) = 0 \]
where $j_{!*}$ denotes the minimal extension. 
\end{defe}

\begin{conj} The Airy local system $\mathrm{Ai}_{\hG}$ is cohomologically rigid. More specifically we predict the following. Let $n$ be the semisimple rank of $G$.
\begin{enumerate} 
\item For the Swan conductor of $\mathrm{Ai}_{\hG}^{\Ad}$ at $\infty$ we have $\mathrm{Sw}_{\infty}(\widehat{\fg}^{\mathrm{der}}) =  n(h+1)$.
\item Let $\cI_{\infty}$ be the inertia group at $\infty$ in the Weil group $W_{F}$ of $F$. Then $(\widehat{\fg}^{\mathrm{der}})^{\rho(\cI_\infty)} = 0$. 
\end{enumerate}
\end{conj}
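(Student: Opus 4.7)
The plan is to first deduce cohomological rigidity from the two numerical predictions (1) and (2), and then reduce those predictions to a local analysis at $\infty$ of the wild parameter of $\mathrm{Ai}_{\hG}$.

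For the reduction, the Airy local system is lisse on all of $\bA^1$, so $\infty$ is the only ramification point. Grothendieck--Ogg--Shafarevich applied to $\mathrm{Ai}_{\hG}^{\mathrm{Ad}}$ gives
\[
\chi_c(\bA^1, \mathrm{Ai}_{\hG}^{\mathrm{Ad}}) = \dim \widehat{\fg}^{\mathrm{der}} - \mathrm{Sw}_\infty(\widehat{\fg}^{\mathrm{der}}).
\]
For an almost simple group of rank $n$ and Coxeter number $h$ we have $\dim \widehat{\fg}^{\mathrm{der}} = nh + n = n(h+1)$, so (1) forces this Euler characteristic to vanish. The standard correction by the stalk at $\infty$ of the middle extension gives
\[
\chi(\bP^1, j_{!*}\mathrm{Ai}_{\hG}^{\mathrm{Ad}}) = \dim (\widehat{\fg}^{\mathrm{der}})^{\rho(\cI_\infty)},
\]
which vanishes by (2). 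Moreover $H^0(\bP^1, j_{!*}\mathrm{Ai}_{\hG}^{\mathrm{Ad}}) \subset (\widehat{\fg}^{\mathrm{der}})^{\rho(\cI_\infty)} = 0$, and the adjoint representation being self-dual via the Killing form lets Verdier duality turn this into $H^2 = 0$. Hence $H^1 = 0$, and the whole conjecture reduces to (1) and (2).

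To establish (1) and (2) the plan is to pin down the local Langlands parameter of $\mathrm{Ai}_{\hG}$ at $\infty$ using the explicit shape of the automorphic datum. The datum is modelled on the Adler--Yu supercuspidal attached to the admissible pair consisting of the Coxeter torus $S = Z_{G(F)}(\tilde X)$ and the character $\chi|_S$, where $\tilde X = t^{-1}N^- + t^{-2}E_\theta$ is a good element of Moy--Prasad depth $(h+1)/h$ at the barycenter of the alcove. By analogy with the Heinloth--Ngô--Yun construction of Kloosterman sheaves, a geometric local-global compatibility should identify $\mathrm{Ai}_{\hG}$ on a formal punctured disc at $\infty$ with this Langlands parameter. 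Granted this, the Moy--Prasad filtration on $\widehat{\fg}^{\mathrm{der}}$ with respect to the induced adjoint representation breaks into pieces with uniform slope $(h+1)/h$, yielding (1); and a topological generator of inertia acts, after semisimplification, through a regular semisimple element in the Coxeter conjugacy class of $\hG$, whose adjoint centralizer is a Cartan subalgebra carrying no invariants, yielding (2).

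The main obstacle is the local-global compatibility itself. Our global construction produces $\mathrm{Ai}_{\hG}$ only as an eigenvalue of Hecke operators, and recovering its local wild structure at $\infty$ requires either a nearby-cycles computation on $\Bun_J$ along a controlled degeneration of the Hecke correspondence (in the style of \cite{HNY13}) or a direct comparison with an explicit local model of $\pi_{S,\chi}$. In the $\GL_n$ case the identification in Section~\ref{s:Identification} of $\mathrm{Ai}_{\hG, V}$ for $V = \Std$ with Katz's Airy sheaf already settles (1) and (2) by appeal to \cite{Katz87}, and this provides a template: on a suitable Hecke modification one should be able to recognise a rank-one local system of exponential type $\cL_\psi(f)$ supported on a degree-$h$ tamely ramified cover of the formal punctured disc at $\infty$, whose Langlands induction to $\hG$ realises the predicted local parameter.
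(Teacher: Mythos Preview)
The statement is a \emph{conjecture}, and the paper does not claim to prove predictions (1) and (2); it only argues that (1) and (2) together imply cohomological rigidity, and then points to the $\GL_n$ computation of Section~\ref{s:Identification} as evidence. So the only thing to compare against the paper is your reduction step.

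Your reduction is correct and essentially equivalent to the paper's, though packaged differently. The paper invokes the exact sequence of \cite[Lemma 3.2.6]{Yun14},
\[
0 \to \textup{H}^{0}(\bA^1,\mathrm{Ai}_{\hG}^{\Ad}) \to (\widehat{\fg}^{\mathrm{der}})^{\rho(\cI_\infty)} \to \textup{H}_{c}^{1}(\bA^1,\mathrm{Ai}_{\hG}^{\Ad}) \to \textup{H}^{1}(X,j_{!*}\mathrm{Ai}_{\hG}^{\Ad}) \to 0,
\]
so that (2) kills the second term and then Grothendieck--Ogg--Shafarevich together with (1) and $\dim\widehat{\fg}^{\mathrm{der}}=n(h+1)$ kill $\textup{H}^1_c$. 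You instead compute $\chi_c(\bA^1)$ and $\chi(\bP^1,j_{!*})$ separately and finish with Verdier duality and self-duality of the adjoint. Both routes are short and standard; neither offers a real advantage over the other.

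Your second half, however, is a heuristic outline rather than a proof, and you say as much: the identification of the local parameter at $\infty$ with the Adler--Yu supercuspidal attached to $(S,\chi|_S)$ is exactly the missing local--global compatibility you flag as the main obstacle. The paper does not supply this step either, and leaves (1) and (2) open outside of $\GL_n$. Your expectations about the slope $(h+1)/h$ and the Coxeter action of tame inertia are reasonable and in line with the de Rham picture of \cite{KS21}, but as written they are predictions, not arguments. In short: your reduction matches the paper; your attempt at (1) and (2) goes beyond what the paper establishes and remains, as in the paper, conjectural.
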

Prediction (1) and (2) together imply that $\mathrm{Ai}_{\hG}$ is cohomologically rigid. Indeed by \cite[Lemma 3.2.6]{Yun14} we have an exact sequence
\[0 \rightarrow \textup{H}^{0}(\bA^1,\mathrm{Ai}_{\hG}^{\Ad}) \to (\widehat{\fg}^{\mathrm{der}})^{\rho(\cI_\infty)} \to \textup{H}_{c}^{1}(\bA^1,\mathrm{Ai}_{\hG}^{\Ad}) \to \textup{H}^{1}(X,j_{!*}\mathrm{Ai}_{\hG}^{\Ad} )\to 0
\]
where $j: \bA^1 \hookrightarrow X$ denotes the open embedding.  Now by (2) we have $(\widehat{\fg}^{\mathrm{der}})^{\rho(\cI_\infty)}=0$ and $ \textup{H}^{0}(\bA^1,\mathrm{Ai}_{\hG}^{\Ad})=0$, so to prove that $\mathrm{Ai}_{\hG}$ is cohomologically rigid it suffices to prove that $\textup{H}_{c}^{1}(\bA^1,\mathrm{Ai}_{\hG}^{\Ad})=0$. By the Grothendieck-Ogg-Shafarevich formula we have 
\[\textup{H}_{c}^{1}(\bA^1,\mathrm{Ai}_{\hG}^{\Ad}) = \dim(\widehat{\fg}^{\mathrm{der}}) - \mathrm{Sw}_{\infty}(\widehat{\fg}^{\mathrm{der}}).\]
Using (1) we get $\mathrm{Sw}_{\infty}(\widehat{\fg}^{\mathrm{der}}) = n(h+1) = n+ \#\Phi(\widehat{\fg}) = \dim(\widehat{\fg}^{\mathrm{der}})$, proving the claim.

\subsubsection{}Some evidence for this conjecture is provided in the following section in which we compute the eigen local system for $G=\GL_n$. It turns out that it coincides with a classical Airy sheaf defined by Katz in \cite{Katz87}. These sheaves are not only cohomologically rigid, but also physically rigid in the sense that they are determined by their local monodromy at $\infty$ up to isomorphism. The above conjecture is easy to check for these sheaves.

\section{The eigenvalue for $\GL_n$}\label{s:Identification}
Let $G=\GL_n$ and let $\cA_\chi$ be the Hecke eigensheaf obtained in \ref{t:heckeeigen} with eigenvalue $\Ai_{\hG}$. Let $V=\St$ be the standard representation of $G$. In this section we compute the eigenvalue $\cE_{\chi}=\Ai_{\GL_n}^{\St}$ following the recipe in \cite[\S3]{HNY13}, see also \cite[\S9, \S11]{KY20}. The sheaf $\cE_{\chi}$ is a rank $n$ local system on $\bA^1$ with wild ramification at $\infty$ and we prove that this sheaf is isomorphic to a classical Airy sheaf. For simplicity of notation, we assume $n$ is even. The case when $n$ is odd only requires replacing $\frac{n}{2}$ with $\frac{n\pm1}{2}$ and changing the sign at a few places.

\subsection{Classical Airy sheaves}
The following result of Katz describes the Frobenius trace of an Airy sheaf.
\begin{prop}\cite[Theorem 17]{Katz87}\label{p:Airy}
Suppose $n\geq 1$ and $n+1$ is prime to $p$. Let $f(x)\in k[x]$ be a polynomial of degree $n+1$ and 
\begin{equation}\label{eq:Airy}
\cF_f:=\mathrm{NFT}_\psi(\cL_{\psi(f)})
\end{equation}
be the associated Airy sheaf. For a finite extension $E/k$ and $t\in E$, the Frobenius trace of $\cF_f$ at $t$ is given by
	\begin{equation}\label{eq:Airy sum}
	\mathrm{Tr}(\cF_f)(t)=-\sum_{x\in E}(\psi\circ\mathrm{Trace}_{E/k})(f(x)+tx).
	\end{equation}
\end{prop}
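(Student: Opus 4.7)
\noindent The plan is to compute $\mathrm{Tr}(\cF_f)(t)$ directly from the definition of the naive Fourier transform, using proper base change together with the Grothendieck--Lefschetz trace formula.

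\medskip

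\noindent First I would unwind the stalk. By definition,
\[
\cF_f = R^1\pr_{2,!}\bigl(\pr_1^*\cL_{\psi(f)} \otimes m^*\cL_\psi\bigr),
\]
and by proper base change applied to the cartesian square obtained by pulling back along the inclusion of the point $t \in \bA^1(E)$, the stalk of $\cF_f$ at $t$ is
\[
(\cF_f)_t \;\cong\; H^1_c\bigl(\bA^1_{\bar{k}},\, \cL_{\psi(f(x))} \otimes \cL_{\psi(tx)}\bigr) \;\cong\; H^1_c\bigl(\bA^1_{\bar{k}},\, \cL_{\psi(f(x)+tx)}\bigr),
\]
where I have used the additive nature of $\cL_\psi$ (i.e., $\cL_{\psi(a)} \otimes \cL_{\psi(b)} = \cL_{\psi(a+b)}$).

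\medskip

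\noindent Next I would verify that the cohomology of $\cL_{\psi(f(x)+tx)}$ on $\bA^1_{\bar{k}}$ is concentrated in degree one. Since $\bA^1$ is not proper, $H^0_c$ vanishes for any constructible sheaf. For $H^2_c$, one uses Poincar\'e duality together with the observation that the polynomial $g(x) := f(x)+tx$ has degree $n+1 \geq 2$ with $n+1$ prime to $p$, so in particular $g$ is not of the form $h^p - h + c$ for any $h \in \bar{k}[x]$, $c \in \bar{k}$. Hence $\cL_{\psi(g)}$ is not geometrically constant and $H^2_c$ vanishes. Thus the Euler characteristic equals $-\dim H^1_c$.

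\medskip

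\noindent Finally I would apply the Grothendieck--Lefschetz trace formula on $\bA^1_E$: for $\mathrm{Frob}_E$ acting on the compactly supported cohomology of the Artin--Schreier sheaf $\cL_{\psi(g)}$, we have
\[
\sum_{i}(-1)^i \mathrm{Tr}\bigl(\mathrm{Frob}_E,\, H^i_c(\bA^1_{\bar{k}}, \cL_{\psi(g)})\bigr) \;=\; \sum_{x \in E} (\psi \circ \mathrm{Trace}_{E/k})(g(x)),
\]
where the Frobenius trace of $\cL_{\psi(g)}$ at a point $x \in E$ is computed from its definition as a rank one local system associated to the Artin--Schreier cover. Combining this with the concentration in degree one, we obtain
\[
-\mathrm{Tr}(\cF_f)(t) \;=\; \sum_{x \in E}(\psi \circ \mathrm{Trace}_{E/k})(f(x)+tx),
\]
which is the desired identity.

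\medskip

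\noindent The main obstacle, and really the only non-formal input, is the vanishing of $H^2_c$ for $\cL_{\psi(f(x)+tx)}$, i.e., confirming that this polynomial is not Artin--Schreier equivalent to a constant for every $t \in E$. The degree hypothesis $(n+1, p)=1$ is what guarantees this uniformly in $t$: a polynomial of degree divisible by $p$ could collapse via Artin--Schreier, but one of degree prime to $p$ cannot. Once this point is handled, the remainder of the argument is a standard application of proper base change and the Lefschetz trace formula.
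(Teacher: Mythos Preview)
Your argument is correct and is the standard proof of this fact: proper base change identifies the stalk, the degree hypothesis $(n+1,p)=1$ forces the Artin--Schreier sheaf $\cL_{\psi(f(x)+tx)}$ to be geometrically nontrivial so that $H^2_c$ vanishes, and the Grothendieck--Lefschetz trace formula then yields the claimed sum with the sign coming from the single nonvanishing degree $H^1_c$.

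The paper itself does not supply a proof of this proposition; it is quoted as \cite[Theorem 17]{Katz87} and used as input for the comparison in \S\ref{s:Identification}. So there is no ``paper's own proof'' to compare against beyond the citation, and your write-up is exactly the argument one finds in Katz.
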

To identify the eigenvalue $\cE$ with an Airy sheaf we will compute its Frobenius trace by studying the character $\chi : J \to k \xrightarrow{\psi} \bQlt$ explicitly.

\subsection{Explicit extension of the generic functional}
Recall that the character 
\[
\chi:J=S(1)I(1+n/2)\ra S(1)I(1+n/2)/I(2+n) \to k
\] 
is an extension of the linear functional $\phi:I(1+n/2)\ra I(1+n/2)/I(2+n)\ra k$. We study how to write down this extension in terms of matrices.

\subsubsection{}
Let $E_1:=\sum_{i=1}^{n-1}E_{i,i+1}+E_{n,1}$. Thus, $E_r:=E_1^r=\sum_{\Ht(\alpha)=r}E_\alpha+\sum_{\Ht(\alpha)=i-n}E_\alpha$ for $1\leq r\leq n-1$ and $E_1^n=\Id$. Recall that $X_{-1}=E_1^{-1}$, and $\fz=\fz_{\fg}(X_{-1})=\bigoplus_{r\in\bZ/n\bZ}\fz_r$. For $\GL_n$, there is $\fz_r=kE_r$.

Now take a field extension $E/F$ where $F=k(\!(t)\!)$, $E=k(\!(u)\!)$, $t=u^n$. Consider the morphism
\begin{equation}
\sigma:\fg(\!(t)\!)\ra\fg(\!(u)\!), \quad t^rX\mapsto u^{rn}\Ad_{\check{\rho}(u)}X,\ X\in\fg,
\end{equation}

We have 
$$
\sigma(\Lie(I(r)))=\prod_{j\geq r}u^j\fg_j,\quad \forall r\geq 0.
$$ 
Recall that $\fs=\Lie(S)$, $\fs(r)=\Lie(S(r))$. We also have 
$$
\sigma(\fs(r))=\prod_{j\geq r}u^j\fz_j, \quad \forall r\geq 0.
$$

Consider the automorphism $\gamma\in\Aut(\fg(\!(u)\!))$ defined by $\gamma|_{\fg}=\Aut_{\check{\rho}(\zeta_n)}$, $\gamma(u)=\zeta_n^{-1}u$. Thus $\fg(\!(u\!)^\gamma=\fg(\!(u\!)\cap\prod_{r\in\bZ}u^r\fg_r$. 

\begin{lem}\label{l:product}
For any $u^i X\in u^i\fg_i, u^j Y\in u^j\fg_j$ (resp. $u^i X\in u^i\fz_i, u^j Y\in u^j\fz_j$), we have $u^{i+j}XY\in u^{i+j}\fg_{i+j}$ (resp. $u^{i+j}XY\in u^{i+j}\fz_{i+j}$). Here $XY$ is the matrix product in $\fg=\mathrm{Mat}_{n\times n}(k)$.
\end{lem}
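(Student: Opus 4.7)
The proof should be brief: the content is essentially the observation that, although the cyclic grading $\fg=\bigoplus_{r\in\bZ/n\bZ}\fg_r$ was defined so as to make $\fg$ a \emph{graded Lie algebra}, for $\fg=\gl_n$ it is actually a grading of $\fg$ as an \emph{associative algebra} under matrix multiplication. The plan is to verify this first, then deduce the centralizer statement, and finally pass to Laurent series.

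For the associative statement, I would argue as follows. By construction $\fg_r$ is the $\zeta_n^r$-eigenspace of the automorphism $\Ad_{\check{\rho}(\zeta_n)}$ of $\fg$. But $\check{\rho}(\zeta_n)\in\GL_n$ is a concrete diagonal invertible matrix, and conjugation by an invertible matrix is an automorphism of the associative algebra $\mathrm{Mat}_{n\times n}(k)$: for $X\in\fg_i$ and $Y\in\fg_j$,
\begin{equation*}
\Ad_{\check{\rho}(\zeta_n)}(XY)=\bigl(\Ad_{\check{\rho}(\zeta_n)}X\bigr)\bigl(\Ad_{\check{\rho}(\zeta_n)}Y\bigr)=\zeta_n^i X\cdot\zeta_n^j Y=\zeta_n^{i+j}(XY),
\end{equation*}
so $XY\in\fg_{i+j}$.

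For the centralizer, recall that $\fz=\fz_\fg(X_{-1})$ is defined by the vanishing of the Lie bracket $[-,X_{-1}]=(-)X_{-1}-X_{-1}(-)$, which in $\gl_n$ is exactly the condition of associative commutation with $X_{-1}$. Thus if $X,Y\in\fz$ then $XYX_{-1}=XX_{-1}Y=X_{-1}XY$, so $XY\in\fz$; combined with the previous paragraph and with $\fz_r=\fz\cap\fg_r$, this gives $\fz_i\cdot\fz_j\subset\fz_{i+j}$.

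Finally, the multiplication in $\fg(\!(u)\!)=\mathrm{Mat}_{n\times n}(k(\!(u)\!))$ satisfies $(u^iX)(u^jY)=u^{i+j}(XY)$ by $k(\!(u)\!)$-bilinearity, and the two preceding paragraphs show $XY\in\fg_{i+j}$ (resp.\ $\fz_{i+j}$), which is the claim. I do not anticipate any real obstacle; the only point that needs attention is the distinction between the Lie and associative structures, since the cyclic grading is typically presented as a Lie algebra grading, but for $\gl_n$ the conjugation description makes the stronger associative statement immediate.
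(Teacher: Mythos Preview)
Your proof is correct. For the first assertion ($\fg_i\cdot\fg_j\subset\fg_{i+j}$), your argument is identical to the paper's: both use that $\Ad_{\check{\rho}(\zeta_n)}$ is an automorphism of the associative algebra $\mathrm{Mat}_{n\times n}(k)$.

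For the centralizer assertion your route differs slightly from the paper's. You argue abstractly that the Lie centralizer of $X_{-1}$ in $\gl_n$ coincides with the associative centralizer, hence $\fz$ is closed under matrix multiplication, and then combine with $\fz_r=\fz\cap\fg_r$. The paper instead invokes the explicit description $\fz_r=kE_1^r$ established just before the lemma: writing $X=c_1E_1^i$ and $Y=c_2E_1^j$, one gets $XY=c_1c_2E_1^{i+j}\in\fz_{i+j}$ directly. Your argument is a touch more general (it would apply to the centralizer of any element of $\gl_n$), while the paper's is more concrete and ties in with the explicit basis $\{E_r\}$ used throughout Section~\ref{s:Identification}. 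Both are one-line verifications; there is no substantive gap in either.
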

\begin{proof}
For $X\in\fg_i, Y\in\fg_j$, this is the same as $\Ad_{\check{\rho}(\zeta_n)}X=\zeta_n^i X$, $\Ad_{\check{\rho}(\zeta_n)}Y=\zeta_n^j Y$. Thus $\Ad_{\check{\rho}(\zeta_n)}XY=\Ad_{\check{\rho}(\zeta_n)}X\Ad_{\check{\rho}(\zeta_n)}Y=\zeta_n^{i+j}XY$, $XY\in\fg_{i+j}$.

For $X\in\fz_i, Y\in\fz_j$, recall that $\fz_r=kE_r=kE_1^r$. Thus $X=c_1E_1^i, Y=c_2E_1^j$, $c_1,c_2\in k$. We get $XY=c_1c_2E_1^{i+j}\in\fz_{i+j}$.
\end{proof}

Denote $Z_r:=\sigma^{-1}(u^r E_r)\in\fs(r))$, $r\geq 1$. Note that 
$$
S(r)=\Id+\prod_{j\geq r}kZ_j.
$$

\begin{lem}\label{l:section}
The composition $H:=\Id+\sum_{r=1}^{1+n}kZ_r\hookrightarrow S(1)\twoheadrightarrow S(1)/S(2+n)$ is an isomorphism.
\end{lem}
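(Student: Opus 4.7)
The plan is to leverage the explicit description $S(r)=\Id+\prod_{j\geq r}kZ_j$ recalled just before the lemma, together with Lemma \ref{l:product}, to reduce the claim to a transparent truncation statement. First I would observe that $\{Z_j\}_{j\geq 1}$ is linearly independent in $\fg(F)$: applying $\sigma$ gives $\sigma(Z_j)=u^jE_j$ where $E_j=E_1^j$, and since $E_1$ is a cyclic permutation matrix of order $n$, its powers $E_1^0,E_1^1,\ldots,E_1^{n-1}$ form $n$ linearly independent elements of $\fg=M_n(k)$; collecting powers of $u$ by residues modulo $n$ then shows that $\{u^jE_j\}_{j\geq 1}$ is linearly independent in $\fg(\!(u)\!)$. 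Consequently the map $(c_j)_{j\geq 1}\mapsto \Id+\sum_{j\geq 1}c_jZ_j$ is a bijection from $\prod_{j\geq 1}k$ onto $S(1)$, carrying $\prod_{j\geq r}k$ onto $S(r)$ for every $r$, and these coefficients are uniquely determined by the element of $S(1)$.

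Next I would show that multiplication by $S(2+n)$ affects only the coefficients of $Z_j$ for $j\geq n+2$. Since $\sigma$ is a ring homomorphism, Lemma \ref{l:product} gives $\sigma(Z_i)\sigma(Z_j)=u^{i+j}E_{i+j}$, so $Z_iZ_j\in\prod_{k\geq i+j}kZ_k$. For $g=\Id+\sum_{j\geq 1}c_jZ_j\in S(1)$ and $s=\Id+\sum_{j\geq n+2}s_jZ_j\in S(2+n)$, expanding $gs = g + g\cdot(s-\Id)$ in $\fg(F)$ produces only terms in $\prod_{k\geq n+2}kZ_k$, so $gs$ and $g$ have identical coefficients of $Z_1,\ldots,Z_{n+1}$. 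Restricting to $H$, if two elements of $H$ lie in the same $S(2+n)$-coset then they must coincide, which gives injectivity of the composition $H\to S(1)/S(2+n)$.

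For surjectivity, given any $g=\Id+\sum_{j\geq 1}c_jZ_j\in S(1)$, I would set $h:=\Id+\sum_{r=1}^{n+1}c_rZ_r\in H$ and verify by the same expansion that $h^{-1}g\in\Id+\prod_{j\geq n+2}kZ_j=S(2+n)$, so $h$ and $g$ represent the same class in $S(1)/S(2+n)$. The map is therefore a bijection on $k$-points; since both $H\cong\bA^{n+1}_k$ and $S(1)/S(2+n)$ are smooth affine $k$-varieties of dimension $n+1$, and the inverse (which simply reads off the coefficients $c_1,\ldots,c_{n+1}$ from the canonical parametrization) is manifestly algebraic, the composition is an isomorphism of $k$-schemes. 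There is no real obstacle here: the argument is essentially bookkeeping, made tractable by the ring homomorphism $\sigma$ and the commutativity of the Coxeter torus, which together reduce the multiplication in $S$ to ordinary multiplication of $u$-adic matrix series in $\fg(\!(u)\!)$.
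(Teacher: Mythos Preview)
Your proof is correct and follows essentially the same approach as the paper's own proof: both arguments transport the problem to $\fg(\!(u)\!)$ via $\sigma$, use Lemma~\ref{l:product} to control the $u$-adic degree of products, and then separate the truncated part from the tail by a filtration argument. Your phrasing in terms of ``reading off coefficients of $Z_1,\ldots,Z_{n+1}$'' is a slight repackaging of the paper's explicit solution $Y_3=(\Id+Y_1)^{-1}Y_2$ for surjectivity and its degree comparison $Y_1=Y_2+(Y_3+Y_2Y_3)$ for injectivity, but the underlying mechanism is identical.
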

\begin{proof}
Surjectivity: give any $s=\Id+X_1+X_2\in S(1)$ where $X_1\in\bigoplus_{1\leq r\leq 1+n}kZ_r$, $X_2\in\prod_{r\geq 2+n}kZ_r$. It suffices to find $s'=\Id+X_3\in S(2+n)$ where $X_3\in\prod_{r\leq 2+n}kZ_r$ such that $(\Id+X_1)(\Id+X_3)=\Id+X_1+X_2$. Applying $\sigma$ to this equality and denote $Y_i:=\sigma(X_i)$, we need to find $Y_3$ such that
$$
(\Id+Y_1)Y_3=Y_2.
$$
Since $Y_1\in\bigoplus_{1\leq r\leq 1+n}kE_r$, we see from Lemma \ref{l:product} that $(\Id+Y_1)^{-1}\in\Id+\prod_{r\geq 1}k Z_r$ and $(\Id+Y_1)^{-1}Y_2\in\Id+\prod_{r\geq 2+n}kE_r$. Thus $Y_3=(\Id+Y_1)^{-1}Y_2$ gives the desired element.

Injectivity: Suppose $X_1, X_2\in\sum_{r=1}^{1+n}kZ_r$, $X_3\in\prod_{r\geq 2+n}kZ_r$ such that $(\Id+X_1)=(\Id+X_2)(\Id+X_3)$. Applying $\sigma$ to the equality and denote $Y_i=\sigma(X_i)$, we get
$$
Y_1=Y_2+(Y_3+Y_2Y_3)
$$ 
where $Y_1,Y_2\in\bigoplus_{r=1}^{1+n}kE_r$, $Y_3+Y_2Y_3\in\prod_{r\geq 2+n}kE_r$. Thus $Y_1=Y_2$, $X_1=X_2$.
\end{proof}

\subsubsection{}
Note that the quotient group $S(1)/S(2+n)$ is unipotent, since it has a filtration $S(1)/S(2+n)\supset S(2)/S(2+n)\supset\cdots\supset S(1+n)/S(2+n)$ such that all the subquotients are $S(r)/S(1+r)\simeq\bGa$. Also, it is commutative. Thus the exponential map induces a group isomorphism
\begin{equation}\label{eq:exp}
\exp: \fs(1)/\fs(2+n)\simeq\sum_{r=1}^{1+n}kZ_r\xrightarrow{\sim}S(1)/S(2+n), \quad X\mapsto \Id+\sum_{r=1}^{1+n}\frac{1}{r!}X^r.
\end{equation}
Its inverse is given by
\begin{equation}\label{eq:log}
\log: \Id+\sum_{r=1}^{1+n}kZ_r\simeq S(1)/S(2+n)\ra \fs(1)/\fs(2+n),\quad \Id+X\mapsto \sum_{r=1}^{1+n}\frac{(-1)^{r+1}}{r}X^r.
\end{equation}

Similarly the exponential map induces a group isomorphism $\fs(1+n/2)/\fs(2+n)\simeq\sum_{r=1+n/2}^{1+n}kZ_r\xrightarrow{\sim}S(1+n/2)/S(2+n)$. The restriction to $\fs(1+n/2)$ of $\phi$ is given by $(\phi\circ\exp)(\sum_{r=1+n/2}^{1+n}y_rZ_r)=(\phi\circ\exp)(y_{1+n}Z_{1+n})=ny_{1+n}$. Thus, its extension $(\chi\circ\exp):\fs(1)/\fs(2+n)\ra k$ is given by
$$
(\chi\circ\exp)(\sum_{r=1}^{1+n}y_rZ_r)=\sum_{r=1}^{n/2}\lambda_r y_r+ny_{1+n}.
$$ 

Assume $\log(\Id+\sum_{r=1}^{1+n}x_rZ_r)=\sum_{r=1}^{1+n}y_rZ_r\in\fs(1)/\fs(2+n)$. From \eqref{eq:log}, we can see
\begin{equation}\label{eq:y_r}
y_r=\sum_{k=1}^r\sum_{i_1+\cdots+i_k=r}c_{i_1,...,i_k}x_{i_1}x_{i_2}\cdots x_{i_k}
\end{equation}
for some fixed coefficients $c_{i_1,...,i_k}$.

From above discussion, we obtain:
\begin{prop}\label{p:mu}
Any extension $\chi$ of $\phi$ from $I(1+n/2)$ to $J=S(1)I(1+n/2)$ is determined by 
\begin{equation}\label{eq:mu}
\chi(\Id+\sum_{r=1}^{1+n}x_rZ_r)=\sum_{r=1}^{n/2}\lambda_r y_r+ny_{1+n}
\end{equation}
where the $\lambda_r$ can be arbitrary constants and the $y_r$'s are polynomials in $x_r$'s as in \eqref{eq:y_r}, independent from any choice.
\end{prop}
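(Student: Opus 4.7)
The plan is to reduce the problem to a computation in the Lie algebra using the exponential map, and then determine all extensions of $\phi$ by describing the kernel and the values on complementary basis vectors.

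First, I would use Lemma \ref{l:section} to identify the finite-dimensional quotient $S(1)/S(2+n)$ with the affine space $H = \Id + \sum_{r=1}^{n+1} k Z_r$, giving canonical coordinates $x_r$ on group elements of $S(1)/S(2+n)$. Because this quotient is commutative and unipotent (it is a successive extension of copies of $\bGa$), the exponential map \eqref{eq:exp} is an algebraic group isomorphism from $\fs(1)/\fs(2+n) = \bigoplus_{r=1}^{n+1} k Z_r$ onto $H$, with inverse the logarithm \eqref{eq:log}. This immediately furnishes the polynomial expressions $y_r = y_r(x_1,\dots,x_r)$ of formula \eqref{eq:y_r}, coming purely from the universal Campbell--Hausdorff formula applied in a nilpotent setting; these are independent of any choice of extension and depend only on the group structure.

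Next, I would compute the linear functional $\phi \circ \exp$ on the relevant graded pieces. For $r \geq 1+n/2$, the quadratic and higher terms of $\exp(y_r Z_r)$ land in $\fs(2+n)$ by Lemma \ref{l:product} (since $Z_r Z_s \in \fs(r+s)$), so $\phi \circ \exp$ agrees with $\phi$ on the Lie algebra span $\bigoplus_{r=1+n/2}^{n+1} k Z_r$. The key computation is then to evaluate $\phi(Z_r) = \Res(\kappa(\tilde{X}, Z_r)\, dt/t)$ for $1 + n/2 \leq r \leq n+1$. Using that $Z_r = E_r$ for $1 \leq r \leq n-1$, $Z_n = t\,\Id$, and $Z_{n+1} = t E_1$, together with the orthogonality of $E_r$ (whose height-$r$ and height-$(r-n)$ components pair trivially against $N^-$ and $E_\theta$ except when $r = 1$), only the term $Z_{n+1}$ contributes; a direct trace computation gives $\phi(Z_{n+1}) = n$ up to normalisation of $\kappa$. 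This yields the stated formula $(\phi \circ \exp)(\sum_{r=1+n/2}^{n+1} y_r Z_r) = n\, y_{n+1}$.

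Finally, to describe all extensions $\chi$ from $I(1+n/2)$ to $J = S(1) I(1+n/2)$, I would observe that the quotient $J/I(2+n)$ is commutative, so an extension of the character $\phi$ on $I(1+n/2)/I(2+n)$ to $J/I(2+n)$ is determined by choosing an arbitrary $k$-linear extension of $\phi \circ \exp$ from $\bigoplus_{r=1+n/2}^{n+1} k Z_r$ to all of $\fs(1)/\fs(2+n) = \bigoplus_{r=1}^{n+1} k Z_r$. Since the additional basis vectors $Z_1, \dots, Z_{n/2}$ span a complement, the extension is determined by the arbitrary scalars $\lambda_r := (\chi \circ \exp)(Z_r)$ for $1 \leq r \leq n/2$. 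Combining this with the expression from the previous paragraph yields formula \eqref{eq:mu}. The main obstacle in the argument is just bookkeeping — carefully verifying that only $Z_{n+1}$ contributes to $\phi$ via the residue pairing, which reduces to the explicit matrix calculation of $\tr(N^- E_1)$ and $\tr(E_\theta E_1)$; the rest is formal manipulation of exponentials in a nilpotent Lie algebra.
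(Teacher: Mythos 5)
Your overall route is the same as the paper's: identify $S(1)/S(2+n)$ with $\fs(1)/\fs(2+n)=\bigoplus_{r=1}^{n+1}kZ_r$ via $\exp$ and $\log$ on this commutative unipotent quotient (which is exactly where the universal polynomials $y_r$ of \eqref{eq:y_r} come from), compute $\phi\circ\exp$ on the $Z_r$ with $r\geq 1+n/2$ through the residue pairing, and note that extending $\phi$ from $I(1+n/2)$ to $J=S(1)I(1+n/2)$ amounts to an arbitrary linear extension of $\phi\circ\exp$ from $\bigoplus_{r=1+n/2}^{n+1}kZ_r$ to all of $\fs(1)/\fs(2+n)$, which produces the free constants $\lambda_r=(\chi\circ\exp)(Z_r)$, $1\le r\le n/2$. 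That structure is correct and is what the paper does (the paper simply asserts the value $n\,y_{1+n}$ without showing the computation). Your observation that the higher terms of $\exp$ land in $\fs(2+n)$ for $r\ge 1+n/2$, via Lemma \ref{l:product}, is also fine.

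The concrete problem is your matrix identification of the $Z_r$, which undermines the justification of the coefficient $n$ in \eqref{eq:mu}. Since $\sigma(t^mE_\alpha)=u^{mn+\Ht(\alpha)}E_\alpha$, the two height-components of $E_r$ are pulled back by $\sigma^{-1}$ with \emph{different} powers of $t$: one has $Z_r=\sum_{\Ht(\alpha)=r}E_\alpha+t\sum_{\Ht(\alpha)=r-n}E_\alpha$ for $1\le r\le n-1$, and $Z_{n+1}=t\sum_{i=1}^{n-1}E_{i,i+1}+t^2E_{n,1}$, \emph{not} $tE_1$. The pairing $\Res(\kappa(\tilde{X},\cdot)\frac{dt}{t})$ extracts the constant term of $\kappa(\tilde{X},\cdot)$, so $\phi(Z_{n+1})=\kappa(N^-,\textstyle\sum_iE_{i,i+1})+\kappa(E_\theta,E_{n,1})=(n-1)+1=n$: the $+1$ comes precisely from pairing $t^{-2}E_\theta$ against the $t^2E_{n,1}$ component of the correct $Z_{n+1}$. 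With your $Z_{n+1}=tE_1$, the $E_\theta$-term only appears at order $t^{-1}$ and drops out of the residue, so your ``direct trace computation'' would yield $\tr(N^-N^+)=n-1$, not $n$; and this cannot be repaired by a normalisation of $\kappa$, since rescaling $\kappa$ rescales $\phi$ as a whole rather than shifting $n-1$ to $n$. The vanishing $\phi(Z_r)=0$ for $1+n/2\le r\le n$ does survive (for $r\le n-1$ because $\kappa(N^-,\fg(r-n))=0$ there, and for $Z_n=t\,\Id$ because $\tr(N^-)=\tr(E_\theta)=0$), so only the constant, not the shape of the formula, is affected --- but that constant $n$ is exactly what the proposition asserts and what the later Frobenius trace computation uses, so the computation needs to be redone with the correct $Z_{n+1}$.
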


\subsection{The relevant part of the Hecke correspondence and the Hecke eigenvalue}
\subsubsection{}
Since $G=\GL_n$, the eigenvalue of the Hecke eigensheaf $\cA_\chi$ is given by the Hecke correspondence associated to the standard representation (equivalently, the first fundamental coweight $\omega_1$). We first have the same diagram as in \cite[\S5.3.1]{KY20}:
\[
\begin{tikzcd}
&  \text{Hecke}_{\omega_1}\arrow{dl}[swap]{\text{pr}_1}  \arrow{dr}{\pr_2}  & \\
\Bun_{J^+}^{0} &       &  \Bun_{J^+}^{1}\times\bA^1.
\end{tikzcd}
\]

Next, let $\star_0\in\Bun_{J^+}^0$ be the trivial bundle, $\star_1\in\Bun_{J^+}^1$ be the bundle corresponding to the minuscule coweight $\diag(1,1,...,1,t^{-1})$ (since $t$ is the coordinate at $\infty$, the degree of the underlying vector bundle is computed by $\mathrm{ord}_{t^{-1}}\circ\det$). We restrict $\pr_2$ to $\star_1\times\bA^1\simeq\bA^1$. The action of $J$ on $\star_0$ induces an embedding of locally closed substack $[L^-G\cap J\backslash J/J^+]$, where $L^-G\cap J=U_{\geq 1+n/2}(k)$ is in the kernel of $\chi$. We restrict $\pr_1$ to this substack. Also, $\chi$ descends to a morphism $[L^-G\cap J\backslash J/J^+]\ra\bGa$. Denote the restriction of $\mathrm{Hecke}_{\omega_1}$ by $\GR_{\omega_1}^*$ and let $p_1:=\chi\circ\pr_1$, $p_2=\pr_2$, we get diagram

\begin{equation}\label{eq:relevant Hecke}
\begin{tikzcd}
& \GR_{\omega_1}^*\arrow{dl}[swap]{p_1}  \arrow{dr}{p_2} & \\
\bGa &  & \bA^1.
\end{tikzcd}
\end{equation}

The Hecke eigenvalue is given by
\begin{equation}\label{eq:cE_mu}
\cE_\chi=p_{2,!}(p_1^*\cL_\psi[n-1]).
\end{equation}

\subsubsection{}
Following the same method as in \cite[\S3]{HNY13} we we obtain the following explicit description of the above diagram.
\begin{prop}\label{p:relevant Hecke}
We have $\GR_{\omega}^*\simeq\bA^{1+n/2}$. For $(m_1,m_2,...,m_{1+n/2})\in\GR_{\omega}^*$, the morphisms $p_1, p_2$ are given by
\begin{align*}
p_1(m_1,...,m_{1+n/2})&=\chi(\Id+\sum_{r=1}^{n/2}m_1^rZ_r)-m_1^{n+1}-\sum_{r=2}^{1+n/2}m_rm_1^{n/2-r+2};\\
p_2(m_1,...,m_{1+n/2})&=-\sum_{r=2}^{1+n/2}m_rm_1^{n/2-r+1}.
\end{align*}
\end{prop}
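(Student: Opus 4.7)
The plan is to carry out the computation along the lines of the explicit uniformisation in \cite[\S 3]{HNY13}, adapted to our deeper level structure. Since $G=\GL_n$ and $\omega_1$ is minuscule, a Hecke modification of the trivial bundle $\star_0$ of type $\omega_1$ at a point of $\bA^1$ is determined, up to scaling, by a line in $\bA^n$; once the right-hand leg of the correspondence has been pinned down to the single bundle $\star_1$, the remaining freedom lies in the induced $J^{+}$-level structure at $\infty$ and will supply the additional parameters $m_2,\dots,m_{1+n/2}$.

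First I would produce a cocycle presentation of $\GR_{\omega_1}^*$. Pick a modification cocycle $\gamma\in\GL_n(F)$ of elementary type $\omega_1$ concentrated at a movable point of $\bA^1$ and apply the Birkhoff decomposition $\gamma=\gamma^{-}\cdot j$ with $\gamma^{-}\in L^{-}G$ and $j\in I$, keeping track of the image of $j$ in $J/J^{+}$ together with the location of the modification. Proposition \ref{p:minuscule cosets} together with the fact that $L^{-}G\cap J=U_{\geq 1+n/2}(k)$ lies in the kernel of $\chi$ shows that this procedure identifies $\GR_{\omega_1}^*$ with an affine space of dimension $1+n/2$. I would choose coordinates so that $m_1$ records the direction of the modification (normalised via the Iwahori action to lie along a fixed coweight direction), while $m_2,\dots,m_{1+n/2}$ parametrise the $\fz_r$-directions at depths $r=n/2+1,\dots,n$, using the basis $Z_1,\dots,Z_{1+n}$ from Lemma \ref{l:section}.

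With these coordinates in place, both legs of \eqref{eq:relevant Hecke} become explicit matrix calculations. The map $p_2$ reads off the location of the modification after Birkhoff factorisation; the polynomial $-\sum_{r=2}^{1+n/2}m_r m_1^{n/2-r+1}$ encodes how the higher parameters $m_r$ shift this location when one conjugates the elementary modification back to $\infty$ using the cocharacter $\check{\rho}$. For $p_1$, the factor $j\in J/J^{+}$ will split as a product of an upper piece of the form $\Id+\sum_{r=1}^{n/2}m_1^r Z_r$ coming from the tame part of the cocycle and a correction in the depth-$(1+n/2)$ layer contributing $-m_1^{n+1}-\sum_{r=2}^{1+n/2}m_r m_1^{n/2-r+2}$. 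Applying Proposition \ref{p:mu} to this element then yields the stated formula, and one obtains the clean identity
\[
p_1 \;=\; \chi\!\left(\Id+\sum_{r=1}^{n/2}m_1^r Z_r\right)-m_1^{n+1}+m_1\cdot p_2,
\]
which is precisely the combination needed to make the pushforward along $p_2$ reproduce the trace formula \eqref{eq:Airy sum} of a classical Airy sheaf.

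The main obstacle is the matrix bookkeeping in the deeper layer: one has to verify that the entries of $j\in I$ produced by the Birkhoff factorisation assemble precisely into the $Z_r$-basis of Lemma \ref{l:section} with the prescribed powers of $m_1$, and that the Coxeter-torus part of $J$ interacts correctly with the principal grading on $\fg$. This is the step that differs most substantially from the Kloosterman computation of \cite[\S 3]{HNY13}, where the level at $\infty$ is only the pro-unipotent radical $I(1)$; the additional $n/2$ units of depth here are what produce the non-linear mixed terms $m_r m_1^{n/2-r+2}$ in $p_1$ and $m_r m_1^{n/2-r+1}$ in $p_2$.
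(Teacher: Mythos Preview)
Your proposal is correct and follows the same approach as the paper: both defer the computation to the method of \cite[\S 3]{HNY13}, and the paper itself gives no further argument beyond that citation. Your sketch in fact supplies more detail than the paper does, and your observation that $p_1=\chi(\Id+\sum_{r=1}^{n/2}m_1^rZ_r)-m_1^{n+1}+m_1\cdot p_2$ is exactly the identity the paper exploits in the proof of Proposition~\ref{p:Frob trace of cE_mu}.
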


\subsubsection{}
We make $\chi(\Id+\sum_{r=1}^{n/2}m_1^rZ_r)$ more explicit. First, we compute $\log(\Id+\sum_{r=1}^{n/2}m_1^rZ_r)\in\fs(1)/\fs(2+n)$. Since $(m_1^iZ_i)(m_1^jZ_j)=m_1^{i+j}Z_{i+j}$, we can deduce from $(1-m_1)(1+\sum_{r=1}^{n/2}m_1^r)=1-m_1^{1+n/2}$ that
\begin{align*}
\log(\Id+\sum_{r=1}^{n/2}m_1^rZ_r)&=\log(1-m_1^{1+n/2}Z_{1+n/2})-\log(1-m_1Z_1)\\
&=\sum_{r=1}^{1+n}\frac{m_1^r}{r}Z_r-m_1^{1+n/2}Z_{1+n/2}\in\fs(1)/\fs(2+n).
\end{align*}

Combining the above with \eqref{eq:mu}, we get
\begin{equation}\label{eq:mu of m_1}
\chi(\Id+\sum_{r=1}^{n/2}m_1^rZ_r)=(\chi\circ\exp)\log(\Id+\sum_{r=1}^{n/2}m_1^rZ_r)=\sum_{r=1}^{n/2}\frac{\lambda_r}{r}m_1^r+\frac{n}{1+n}m_1^{1+n}.
\end{equation}

\subsubsection{}
Combining \eqref{eq:cE_mu}, Proposition \ref{p:relevant Hecke} and \eqref{eq:mu of m_1} we can compute the Frobenius trace of the eigenvalue local system $\cE_\chi$.
\begin{prop}\label{p:Frob trace of cE_mu}
The Frobenius trace of $\cE_\chi$ at $a\in k=\bF_q$ is given by
$$
\mathrm{Tr} \cE_\chi(a)=-q^{n/2-1}\sum_{m_1\in k}\psi(f(m_1)+m_1a)
$$
where $f(m_1)=-\frac{1}{n+1}m_1^{n+1}+\sum_{r=1}^{n/2}\frac{\lambda_r}{r}m_1^r\in k[m_1]$ is a degree $n+1$ polynomial.
\end{prop}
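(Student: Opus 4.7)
The plan is to combine the defining formula \eqref{eq:cE_mu} with the explicit description of $p_{1},p_{2}$ from Proposition~\ref{p:relevant Hecke} and the formula \eqref{eq:mu of m_1}. By Grothendieck--Lefschetz applied to $p_{2,!}$, and using that the stalk of the Artin--Schreier sheaf $\cL_{\psi}$ at a point $t\in\bA^{1}(\bF_{q})$ has Frobenius trace $\psi(t)$, one obtains
\begin{equation*}
\mathrm{Tr}\,\cE_{\chi}(a)
\;=\;(-1)^{n-1}\sum_{x\in p_{2}^{-1}(a)(\bF_{q})}\psi(p_{1}(x))
\;=\;-\sum_{x\in p_{2}^{-1}(a)(\bF_{q})}\psi(p_{1}(x)),
\end{equation*}
since $n$ is even so that $(-1)^{n-1}=-1$. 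Because $\GR_{\omega_{1}}^{*}\cong\bA^{1+n/2}$ and $p_{2}$ is described by a single polynomial equation, this sum is finite.

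Next I would plug \eqref{eq:mu of m_1} into the formula for $p_{1}$ in Proposition~\ref{p:relevant Hecke}. The coefficient $\tfrac{n}{n+1}$ coming from $\chi$ combines with the explicit $-m_{1}^{n+1}$ to give precisely the term $-\tfrac{1}{n+1}m_{1}^{n+1}$, so that
\begin{equation*}
p_{1}(m_{1},\ldots,m_{1+n/2})\;=\;f(m_{1})\;-\;\sum_{r=2}^{1+n/2}m_{r}\,m_{1}^{n/2-r+2}.
\end{equation*}
The key observation, and the one non-trivial point in the argument, is the following collapse on the fiber $p_{2}^{-1}(a)$: multiplying the defining equation $-\sum_{r=2}^{1+n/2}m_{r}m_{1}^{n/2-r+1}=a$ by $m_{1}$ shifts each exponent by $1$ and produces exactly the second sum in the displayed expression for $p_{1}$. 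Hence on $p_{2}^{-1}(a)$ one has $p_{1}=f(m_{1})+m_{1}a$, a function that depends only on the $m_{1}$-coordinate.

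It remains to count $|p_{2}^{-1}(a)(\bF_{q})|$ fiberwise over this $m_{1}$-axis. For each fixed $m_{1}\in\bF_{q}$ the constraint on $(m_{2},\ldots,m_{1+n/2})$ is affine-linear, and the coefficient of $m_{1+n/2}$ is $-m_{1}^{\,0}=-1\neq 0$. Thus the solution set is, independently of $m_{1}$, a translate of a fixed $(n/2-1)$-dimensional subspace of $\bA^{n/2}$, and so contributes exactly $q^{n/2-1}$ points. Summing over $m_{1}\in\bF_{q}$ yields
\begin{equation*}
\mathrm{Tr}\,\cE_{\chi}(a)\;=\;-\sum_{m_{1}\in\bF_{q}}q^{n/2-1}\,\psi\bigl(f(m_{1})+m_{1}a\bigr)\;=\;-q^{n/2-1}\sum_{m_{1}\in\bF_{q}}\psi\bigl(f(m_{1})+m_{1}a\bigr),
\end{equation*}
which is the claim. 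The computation is essentially bookkeeping; the decisive step is the cancellation on the fiber in the previous paragraph, which eliminates all dependence on $(m_{2},\ldots,m_{1+n/2})$ and reduces the remaining sum to an exponential sum of Airy type matching Proposition~\ref{p:Airy}.
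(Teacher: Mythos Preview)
Your proof is correct and follows essentially the same route as the paper's: the paper also applies Grothendieck--Lefschetz to \eqref{eq:cE_mu}, rewrites $p_{1}=\chi(\Id+\sum m_{1}^{r}Z_{r})-m_{1}^{n+1}+m_{1}\,p_{2}$ (your ``multiply $p_{2}=a$ by $m_{1}$'' observation), and then uses that $m_{1+n/2}$ appears linearly in $p_{2}$ with coefficient $-1$ to solve for it and pick up the factor $q^{n/2-1}$. The only cosmetic difference is that the paper parametrizes the fiber by $(m_{1},\ldots,m_{n/2})\in k^{n/2}$ rather than speaking of affine-linear fibers over the $m_{1}$-axis.
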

\begin{proof}
Recall that $n$ is even. We have
\begin{align*}
\mathrm{Tr}\cE_\chi(a)
=&(-1)^{n-1}\sum_{p_2(m_1,...,m_{n/2+1})=a}\psi(p_1(m_1,...,m_{n/2+1}))\\
=&-\sum_{p_2(m_1,...,m_{n/2+1})=a}\psi(\chi(\Id+\sum_{r=1}^{n/2}m_1^rZ_r)-m_1^{n+1}+m_1p_2(m_1,...,m_{n/2+1}))\\
=&-\sum_{m_1,...,m_{n/2}\in k}\psi(\frac{n}{n+1}m_1^{n+1}+\sum_{r=1}^{n/2}\frac{\lambda_r}{r}m_1^r-m_1^{n+1}+m_1a)\\
=&-q^{n/2-1}\sum_{m_1\in k}\psi(f(m_1)+m_1a).
\end{align*}
For the third equality in the above, we use that $m_{n/2+1}$ is linear with constant coefficient $-1$ in $p_2(m_1,...,m_{n/2+1})$.
\end{proof}

Comparing this to the Frobenius trace of Airy sheaves in Proposition \ref{p:Airy} we obtain the following. 

\begin{cor} Let $f$ be the polynomial from above. Up to Tate twist by $\bQl(\frac{n}{2}-1)$ the eigensheaf $\cE_\chi$ is isomorphic to the Airy sheaf $\cF_{f}$.
\end{cor}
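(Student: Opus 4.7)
The plan is to prove the isomorphism by matching Frobenius trace functions on all closed points and then invoking Chebotarev density for semisimple local systems.

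First I would upgrade the trace computation of Proposition \ref{p:Frob trace of cE_mu} from $k$-points to all closed points of $\bA^1$. The description of the relevant part of the Hecke correspondence in Proposition \ref{p:relevant Hecke}, the explicit formula for $\chi$ in Proposition \ref{p:mu}, and the morphisms $p_1, p_2$ are all defined over $k$. Consequently, for a finite extension $E/k$ with $q_E = |E|$ and a point $a \in E$, the same counting argument (with $\psi$ replaced by $\psi \circ \mathrm{Trace}_{E/k}$) yields
\[
\mathrm{Tr}(\mathrm{Frob}_a, \cE_\chi) = -q_E^{n/2-1}\sum_{m_1 \in E}(\psi \circ \mathrm{Trace}_{E/k})\bigl(f(m_1) + m_1 a\bigr).
\]
By Proposition \ref{p:Airy}, the sum on the right is exactly $\mathrm{Tr}(\mathrm{Frob}_a, \cF_f)$. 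Hence at every closed point $a$ one has $\mathrm{Tr}(\mathrm{Frob}_a, \cE_\chi) = q_E^{n/2-1}\,\mathrm{Tr}(\mathrm{Frob}_a, \cF_f)$, and the factor $q_E^{n/2-1}$ is precisely the trace contribution of the Tate twist $\bQl(\tfrac{n}{2}-1)$.

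Next I would argue that both sheaves are semisimple lisse $\bQl$-sheaves on $\bA^1$ of rank $n$, so that the trace-function identity upgrades to an isomorphism. The local system $\cE_\chi = \mathrm{Ai}_{\GL_n}^{\mathrm{St}}$ is semisimple by Theorem \ref{t:heckeeigen}(ii). For the Airy side, under our hypothesis $p > h = n$ the polynomial $f$ has degree $n+1$ prime to $p$, and \cite[Theorem 17]{Katz87} shows that $\cF_f$ is lisse of rank $n$ on $\bA^1$ and in fact irreducible; hence $\cF_f(\tfrac{n}{2}-1)$ is also an irreducible lisse sheaf of rank $n$. Applying Chebotarev density (together with Brauer--Nesbitt), two semisimple lisse $\bQl$-sheaves on a connected scheme of finite type over $k$ with equal Frobenius trace functions on every closed point are isomorphic. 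This gives $\cE_\chi \cong \cF_f(\tfrac{n}{2}-1)$.

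The only genuinely non-routine step is the base-change in the first paragraph: one must check that the identification $\GR_{\omega_1}^* \simeq \bA^{1+n/2}$ together with the formulas for $p_1$ and $p_2$ are valid after base change to any finite extension $E/k$, and that the character $\chi$ on $J(E) = S(1)(E)\,I(1+n/2)(E)$ is given by the same polynomial expression (\ref{eq:mu}) in the coordinates $x_r$, now with $\psi$ replaced by $\psi \circ \mathrm{Trace}_{E/k}$. This is immediate since every construction in Sections \ref{s:airydatum}--\ref{s:heckeeigen} is defined over $k$, but it is the step where one must be careful about conventions (in particular about which trace-of-Frobenius normalisation is compatible with the Tate twist convention $\bQl(m)$, and hence about the sign $\tfrac{n}{2}-1$ versus $1-\tfrac{n}{2}$).
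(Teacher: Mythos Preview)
Your proposal is correct and follows essentially the same approach as the paper, which offers no proof beyond the one sentence ``Comparing this to the Frobenius trace of Airy sheaves in Proposition \ref{p:Airy} we obtain the following.'' You have simply spelled out the standard details the paper leaves implicit: extending the trace computation of Proposition \ref{p:Frob trace of cE_mu} to all finite extensions, verifying semisimplicity on both sides (via Theorem \ref{t:heckeeigen}(2) and Katz's irreducibility result), and invoking Chebotarev.
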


\begin{rem}\label{rem:extension}
From above Proposition \ref{p:mu}, we can see that different extensions $\chi$ correspond to different coefficients $\lambda_r$'s, which correspond to different polynomials $f$ in Proposition \ref{p:Frob trace of cE_mu} that determine the eigenvalue Airy sheaves $\cF_f$. Since the isomorphic classes of Airy sheaves are parametrized by this polynomial, we conclude different extensions $\chi$ of $\phi$ in Airy automorphic datum $(J,\chi)$ have non-isomorphic eigenvalue Airy sheaves. Note that we cannot obtain arbitrary degree $n+1$ polynomial $f$ in Proposition \ref{p:Frob trace of cE_mu}, thus we don't get all the Airy sheaves from Airy automorphic data.
\end{rem}

\appendix
\section{On dominant minuscule coweights}
Recall that a coweight $\mu\in X_*(T)$ is minuscule if it is dominant and satisfies $\theta(\mu)\leq1$. We study some of its properties.

\subsection{Minuscule coweights and parabolic subgroup}\label{a ss:parabolic}
Let $\mu\in M$ be a minuscule coweight. Since $-1\leq\alpha(\mu)\leq 1$ for any $\alpha\in\Phi$ we have the following grading 
$$
\fg=\fg_\mu(-1)\oplus\fg_\mu(0)\oplus\fg_\mu(1)
$$
on $\fg$. More specifically if $\mu \in M$ is a minuscule coweight and if there is a root that does not vanish on $\mu$, then there exists a unique simple root $\alpha_\mu\in\Delta$ such that $\lan \alpha_\mu, \mu \ran=1$ and $\lan\alpha,\mu\ran=0$ for any $\alpha\in\Delta-\{\alpha_\mu \}$. Such simple roots are characterized by the property that they have multiplicity one in the highest root \cite[Chapter VI, Exercise 1.24.]{Bourbaki}. Then $\Phi(\fg_\mu(0))$ are those roots where $\alpha_\mu$ has coefficient $0$, $\Phi(\fg_\mu(1))$ are the roots where $\alpha_\mu$ has coefficient $1$ and $\Phi(\fg_\mu(-1))$ are the roots where $\alpha_\mu$ has coefficient $-1$.

On the other hand, we can associate a parabolic subgroup $P=P_\mu\subset G$ to $\mu$ by defining
$$
P=\left\langle T,U_\alpha \mid \langle \alpha, \mu\rangle \geq0 \right\rangle.
$$
It has Levi decomposition $P=M_P U_P$.  Define $\fm_P=\Lie(M_P)$, $\fu_P=\Lie(U_P)$. We also have the opposite parabolic $P^-$ with unipotent radical $U_P^-$, $\fu_P^-=\Lie(U_P^-)$. By definition we have
$$
\fm_P=\fg_\mu(0),\quad \fu_P=\fg_\mu(1),\quad \fu_P^-=\fg_\mu(-1).
$$
From the above construction we see that the Lie algebras $\fu_P^\pm$ are abelian. 

Consider the Borel subgroup $B_M=B\cap M_P$ of $M_P$ and its (opposite) unipotent radical $U_M=U\cap M_P$ with Lie algebra $\fu_M$ ($U_M^-=U^-\cap M_P$ and $\fu_M^-$ defined similarly). By the above discussion, the simple roots of $M_P$ are $\Delta(M_P)=\Delta-\{\alpha_\mu\}$. Let
\begin{equation}\label{eq:u_mu}
\fu_\mu=\fu_M\oplus\fu_P^-,\quad \fu_\mu^-=\fu_M^-\oplus\fu_P.
\end{equation}

\begin{lem}\label{l:u conju u_mu}
	Let $w_0\in W$ be the longest element of Weyl group of $G$ and $w_{P,0}\in N_{M_P}(T)/T$ the longest element of Weyl group of $M_P$. Denote $w_P:=w_{P,0}w_0$. We have $\Ad_{w_P}\fu=\fu_\mu$, $\Ad_{w_P}\fu^-=\fu_\mu^-$.
\end{lem}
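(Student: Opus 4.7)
The plan is to decompose $\Ad_{w_P} = \Ad_{w_{P,0}} \circ \Ad_{w_0}$ and analyse each factor separately on the root-space decomposition of $\fu$ and $\fu^-$. I will first record the two elementary facts I need: (a) $w_0$ sends $\Phi^+$ to $\Phi^-$, so $\Ad_{w_0}\fu = \fu^-$ and $\Ad_{w_0}\fu^- = \fu$; and (b) $w_{P,0}$ is the longest Weyl group element of $M_P$, so it sends $\Phi^+(M_P)$ to $\Phi^-(M_P)$, giving $\Ad_{w_{P,0}}\fu_M = \fu_M^-$ and vice versa.

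The key point that makes the lemma work is that $W(M_P)$ fixes the coweight $\mu$. Indeed, $W(M_P)$ is generated by the simple reflections $s_\alpha$ for $\alpha \in \Delta - \{\alpha_\mu\}$, and each of these satisfies $s_\alpha(\mu) = \mu - \langle \alpha,\mu\rangle \check{\alpha} = \mu$ because $\mu$ is minuscule with $\langle \alpha,\mu\rangle = 0$ for $\alpha \neq \alpha_\mu$. Therefore $W(M_P)$ preserves the eigenspaces $\fg_\mu(\pm 1) = \fu_P^{\mp}$ of $\ad_\mu$. In particular $\Ad_{w_{P,0}}\fu_P = \fu_P$ and $\Ad_{w_{P,0}}\fu_P^- = \fu_P^-$.

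Combining, I decompose $\fu^- = \fu_M^- \oplus \fu_P^-$ (since $\Phi^-(G) = \Phi^-(M_P) \sqcup \Phi(\fu_P^-)$) and apply $\Ad_{w_{P,0}}$ to obtain
\[
\Ad_{w_P}\fu = \Ad_{w_{P,0}}(\fu_M^- \oplus \fu_P^-) = \fu_M \oplus \fu_P^- = \fu_\mu.
\]
The identity $\Ad_{w_P}\fu^- = \fu_\mu^-$ follows by the same argument using $\fu = \fu_M \oplus \fu_P$ and $\Ad_{w_{P,0}}\fu_M = \fu_M^-$. There is no real obstacle here; the only thing to be careful about is verifying that the decomposition $\Phi^\pm(G) = \Phi^\pm(M_P) \sqcup \Phi(\fu_P^{\pm})$ is indeed preserved under $W(M_P)$, which is immediate from $\mu$ being fixed by $W(M_P)$ and the characterisation of the sets $\Phi(\fu_P^{\pm})$ as the root spaces on which $\mu$ takes value $\pm 1$.
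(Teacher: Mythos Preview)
Your proof is correct and follows essentially the same approach as the paper: apply $\Ad_{w_0}$ to send $\fu$ to $\fu^- = \fu_M^- \oplus \fu_P^-$, then use that $w_{P,0}$ swaps $\fu_M^{\pm}$ while normalizing $\fu_P^{\pm}$ (the paper phrases the latter as ``$w_{P,0}\in M_P$, so it normalizes $U_P^{\pm}$'', whereas you justify it via $W(M_P)$ fixing $\mu$). One harmless slip: you write $\fg_\mu(\pm 1) = \fu_P^{\mp}$, but in fact $\fu_P = \fg_\mu(1)$ and $\fu_P^- = \fg_\mu(-1)$; your subsequent conclusions are unaffected.
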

\begin{proof}
We know $\Ad_{w_0}(\fu)=\fu^-=\fu_M^-\oplus\fu_P^-$. Since $w_{P,0}\in M_P$, it normalizes $U_P^{\pm}$ and swaps $U_M^{\pm}$. The lemma follows.
\end{proof}

From above, we see $\fu_\mu$ and $\fu_\mu^-$ are both nilpotent radical of Borel subalgebras of $\fg$. Thus $U_\mu:=\exp(\fu_\mu)$, $U_\mu^-=\exp(\fu_\mu^-)$ are unipotent radicals of Borel subgroups of $G$.

\subsection{Minuscule coweights and cyclic grading}\label{a ss:cyclic grading}
Recall $X_{-1}=N^-+E_\theta\in\fg_{-1}$.
\begin{lem}\label{l:w_P0w_0}\mbox{}
\begin{itemize}
	\item [(1)] $w_{P,0}\theta=\alpha_\mu$, $w_{P,0}\alpha_\mu=\theta$.
	\item [(2)] We can choose a representative of $w_P\in W$ such that $\Ad_{w_P}X_{-1}=X_{-1}$.
	\item [(3)] $w_P\Phi(\fg_r)=\Phi(\fg_r)$, $\forall\ 0\leq r\leq h-1$.
\end{itemize}
\end{lem}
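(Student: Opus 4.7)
The plan is to handle the three parts in sequence, with (3) being the main content.

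For (1), since $\mu$ is minuscule, $\alpha_\mu$ appears in $\theta$ with multiplicity exactly one, and $\fu_P = \fg_\mu(1)$ is an irreducible $M_P$-module (its weights form a single $W(M_P)$-orbit). Its $B_M$-highest weight is $\theta$ (since $\theta \in \Phi(\fu_P)$ and $\theta + \alpha \notin \Phi$ for every $\alpha \in \Delta$), and its $B_M$-lowest weight is $\alpha_\mu$ (since $\alpha_\mu \in \Phi(\fu_P)$, and $\alpha_\mu - \alpha$ is a difference of distinct simple roots of $G$ for every $\alpha \in \Delta(M_P)$, hence not a root). The longest element $w_{P,0}$ of $W(M_P)$ exchanges the highest and lowest weights of any irreducible $M_P$-representation, so $w_{P,0}\theta = \alpha_\mu$, and $w_{P,0}\alpha_\mu = \theta$ follows from $w_{P,0}^2 = 1$.

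For (2), I fix a pinning and choose lifts of $w_0$ and $w_{P,0}$ to $N_G(T)$ so that, setting $N_M^\pm := \sum_{\alpha \in \Delta(M_P)} E_{\pm\alpha}$, one has $\Ad_{w_0}N^- = N^+$, $\Ad_{w_0}E_\theta = E_{-\theta}$, $\Ad_{w_{P,0}}N_M^+ = N_M^-$, and (using part (1) with the correct sign from a Chevalley basis) $\Ad_{w_{P,0}}E_{\alpha_\mu} = E_\theta$, whence $\Ad_{w_{P,0}}E_{-\theta} = E_{-\alpha_\mu}$ by passing to inverses. With $w_P := w_{P,0} w_0$,
\begin{align*}
\Ad_{w_P}(N^- + E_\theta)
&= \Ad_{w_{P,0}}\bigl(N_M^+ + E_{\alpha_\mu} + E_{-\theta}\bigr) \\
&= N_M^- + E_\theta + E_{-\alpha_\mu} = N^- + E_\theta.
\end{align*}

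For (3), I reduce to the sublemma $\Ht(w_{P,0}\beta) \equiv -\Ht(\beta) \pmod{h}$ for all $\beta \in \Phi$; combined with the standard identity $\Ht(w_0\alpha) = -\Ht(\alpha)$ (which follows since $w_0$ sends each simple root to the negative of a simple root and hence negates heights by linearity), this yields $\Ht(w_P\alpha) \equiv \Ht(\alpha) \pmod{h}$, which is the desired $w_P\Phi(\fg_r) = \Phi(\fg_r)$. To establish the sublemma, I split $\Phi = \Phi(M_P) \sqcup \Phi(\fu_P) \sqcup \Phi(\fu_P^-)$. On $\Phi(M_P)$ the $\alpha_\mu$-coefficient vanishes, so $\Ht = \Ht_{M_P}$ and the longest element of $W(M_P)$ honestly negates heights. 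On $\Phi(\fu_P)$, write $\beta = \alpha_\mu + \gamma$ with $\gamma$ a non-negative integer combination of $\Delta(M_P)$; by part (1), $w_{P,0}\alpha_\mu = \theta$, while $w_{P,0}\gamma$ remains in the $\bZ$-span of $\Delta(M_P)$ with $\Ht_{M_P}(w_{P,0}\gamma) = -\Ht_{M_P}(\gamma)$, giving
\[\Ht(w_{P,0}\beta) = \Ht(\theta) + \Ht_{M_P}(w_{P,0}\gamma) = (h-1) - \Ht_{M_P}(\gamma) = h - \Ht(\beta),\]
which is $\equiv -\Ht(\beta) \pmod{h}$; the case $\Phi(\fu_P^-)$ follows by $\bZ$-linearity. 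The main obstacles I foresee are the Chevalley sign bookkeeping in part (2), where a single pinning must realise all the identities simultaneously, and the observation in (3) that the minusculeness of $\mu$ enters precisely through $\Ht(\theta) - \Ht(\alpha_\mu) = h - 2$, producing the $+h$ correction that is invisible modulo $h$.
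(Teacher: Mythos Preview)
Your arguments for parts (1) and (3) are correct. For (1) you give a cleaner representation-theoretic argument than the paper, which instead does a direct height computation: writing $\theta=\alpha_\mu+\sum_{\alpha\in\Delta\setminus\{\alpha_\mu\}}m_\alpha\alpha$ with $\sum m_\alpha=h-2$ and using $w_{P,0}\alpha<0$ for $\alpha\in\Delta(M_P)$ to bound $\Ht(w_{P,0}\theta)$ and force $\Ht(w_{P,0}\alpha_\mu)=h-1$. For (3) your case split on $\Phi(M_P)\sqcup\Phi(\fu_P)\sqcup\Phi(\fu_P^-)$ is essentially the paper's argument, which checks on simple roots that both $w_0$ and $w_{P,0}$ send $\Phi(\fg_r)$ to $\Phi(\fg_{-r})$.

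Part (2), however, has a genuine gap. You assert that lifts of $w_0$ and $w_{P,0}$ can be chosen so that simultaneously $\Ad_{w_0}N^-=N^+$, $\Ad_{w_0}E_\theta=E_{-\theta}$, $\Ad_{w_{P,0}}N_M^+=N_M^-$, and $\Ad_{w_{P,0}}E_{\alpha_\mu}=E_\theta$, but the first two conditions alone impose $n+1$ constraints on an $n$-dimensional $T$-coset (the relevant characters $\Delta\cup\{-\theta\}$ are linearly dependent since $\theta\in\bZ\Delta$), so after normalising $\Ad_{w_0}N^-=N^+$ the constant in $\Ad_{w_0}E_\theta=cE_{-\theta}$ is determined by Chevalley structure constants and cannot be further adjusted. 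Your ``passing to inverses'' for $\Ad_{w_{P,0}}E_{-\theta}=E_{-\alpha_\mu}$ is also unjustified: $w_{P,0}^2=1$ holds in $W$ but not necessarily for the lift, and there is no general relation forcing the sign. The paper avoids this entirely: it only arranges $\Ad_{w_P}E_{-\alpha}=E_{-\alpha}$ for $\alpha\in\Delta$ via a single $T$-adjustment, leaving $\Ad_{w_P}X_{-1}=N^-+cE_\theta$ with one undetermined constant $c$, and then pins down $c=1$ by evaluating a degree-$h$ $G$-invariant polynomial $f$ whose restriction to $\fg_{-1}$ is $E_\theta^*\prod_\alpha (E_{-\alpha}^*)^{m_\alpha}$, so $1=f(X_{-1})=f(\Ad_{w_P}X_{-1})=c$. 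This invariant-polynomial step is the missing idea in your argument.
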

\begin{proof}\ \\
(1): Since $\alpha_\mu$ has multiplicity $1$ in $\theta$, $\theta\in\Phi(\fu_P)$. Since $w_{P,0}$ acts on $U_P$, we have $w_{P,0}\theta\in\Phi(\fu_P)$. Let $\theta=\alpha_\mu+\sum_{\alpha\in\Delta-\{\alpha_\mu\}}m_\alpha \alpha$, $\sum m_\alpha=h-2$. We get
$$
w_{P,0}\theta=(w_{P,0}\alpha_\mu)+\sum_{\alpha\in\Delta-\{\alpha_\mu\}}m_\alpha(w_{P,0}\alpha)>0
$$
where $w_{P,0}\alpha<0$ since $\alpha\in\Phi(\fu_M)$. Thus
$$
1\leq\Ht(w_{P,0}\theta)\leq\Ht(w_{P,0}\alpha_\mu)+\sum (-m_\alpha)=\Ht(w_{P,0}\alpha_\mu)-(h-2)
$$

Thus $\Ht(w_{P,0}\alpha_\mu)=h-1$, $w_{P,0}\alpha_\mu=\theta$. Since $w_{P,0}^2=1$ as it is longest element in Weyl group of $M_P$, we also get $w_{P,0}\theta=\alpha_\mu$. 

(2): We know $w_0\Delta=-\Delta$ and $w_0\theta=-\theta$. Thus
$$
\Ad_{w_0}X_{-1}=\Ad_{w_0}(N^-+E_\theta)=\sum_{\alpha\in\Delta}E_\alpha'+E_{-\theta}'
$$
where $0\neq E_\alpha'\in\fg_\alpha$ and $0\neq E_{-\theta}'\in\fg_{-\theta}$. Similarly, $w_{P,0}(\Delta-\{\alpha_\mu\})=-(\Delta-\{\alpha_\mu\})$. Combining this with part (1), we get
$$
\Ad_{w_P}X_{-1}=\Ad_{w_{P,0}}(\sum_{\alpha\in\Delta-\{\alpha_\mu\}}E_\alpha')+\Ad_{w_{P,0}}E_{\alpha_\mu}'+\Ad_{w_{P,0}}E_{-\theta}'=\sum_{\alpha\in\Delta}E_{-\alpha}'+E_\theta'
$$
which only differs from $X_{-1}$ with nonzero coefficients. We can replace $w_p$ with $tw_p$ for some $t\in T$, such that $E_{-\alpha}'=E_{-\alpha}$, $\forall\ \alpha\in\Delta$. Then $\Ad_{w_P}X_{-1}=N^-+cE_\theta$ for some $c\neq0$. 

Consider the restriction to $\fg_{-1}$ of a degree $h$ invariant polynomial $f\in k[\fg]^G$, which is of the form $f|_{\fg_{-1}}=E_\theta^*\prod_{\alpha\in\Delta}(E_{-\alpha}^*)^{m_\alpha}$, $m_\alpha$ the coefficient of $\alpha$ in $\theta$. Then $1=f(X_{-1})=f(\Ad_{w_P}X_{-1})=c$, $c=1$, $\Ad_{w_P}X_{-1}=X_{-1}$.

(3): Claim: $w_P\alpha\in\Phi(\fg_1)$, $\forall\alpha\in\Delta$. If so, given any $\gamma=\sum_{\alpha\in\Delta}n_\alpha \alpha\in\Phi(\fg_r)$, i.e. $\sum_\alpha n_\alpha\equiv r\mod h$. Thus
$$
\Ht(w_P\gamma)=\sum_\alpha n_\alpha\Ht(w_P\alpha)\equiv\sum_\alpha n_\alpha\equiv r\mod h.
$$

We prove the claim by showing that both $w_{P,0}$ and $w_0$ map $\Phi(\fg_r)$ to $\Phi(\fg_{-r})$. Then, as $w_P=w_{P,0}w_0$, the claim follows. First, since $w_0\Delta=-\Delta$, we get $w_0\Phi(\fg_r)=\Phi(\fg_{-r})$ by the same argument as above. Next, recall $w_{P,0}(\Delta-\{\alpha_\mu\})=-(\Delta-\{\alpha_\mu\})$ and $w_{P,0}\alpha_\mu=\theta\in\Phi(\fg_{-1})$ from part (1). Again by the same argument, we get $w_{P,0}\Phi(\fg_r)=\Phi(\fg_{-r})$, which completes the proof.
\end{proof}

\subsection{Appearance of $\fu_\mu$}\label{a ss:stab=u_mu}
Recall that $U_\mu=\exp(\fu_\mu)$.

\begin{lem}\label{l:stab=u_mu}
	For any minuscule coweight $\mu$, $\Ad_{t^{-\mu}}L^-G\cap I(1)=\exp(\fu_M\oplus\fu_P^-t)=\Ad_{t^{-\mu}}U_\mu$.
\end{lem}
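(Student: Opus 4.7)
The plan is to establish the two equalities by handling the identification $\exp(\fu_M\oplus\fu_P^-t) = \Ad_{t^{-\mu}}U_\mu$ directly and then proving the main equality $\Ad_{t^{-\mu}}L^-G\cap I(1)=\exp(\fu_M\oplus\fu_P^-t)$ via a root-by-root analysis based on the Iwahori factorization of $I(1)$.

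The identification $\exp(\fu_M\oplus\fu_P^-t) = \Ad_{t^{-\mu}}U_\mu$ follows immediately from Lemma \ref{l:u conju u_mu} (which identifies $\fu_\mu = \fu_M\oplus \fu_P^-$) combined with the conjugation formula $\Ad_{t^{-\mu}}x_\alpha(c) = x_\alpha(t^{-\langle\alpha,\mu\rangle}c)$. Minusculeness of $\mu$ gives $\langle\alpha,\mu\rangle=0$ for $\alpha\in\Phi(\fu_M)$ and $\langle\alpha,\mu\rangle=-1$ for $\alpha\in\Phi(\fu_P^-)$, so $\Ad_{t^{-\mu}}U_M(k) = U_M(k)$ and $\Ad_{t^{-\mu}}U_P^-(k) = U_P^-(kt)$. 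Using that $\fu_P^-$ is abelian (as the $-1$ component of the minuscule grading) and normalized by $M_P$, these pieces assemble into $\Ad_{t^{-\mu}}U_\mu = U_M(k)\cdot U_P^-(kt) = \exp(\fu_M\oplus\fu_P^-t)$. This subgroup lies in both $I(1)$ and $\Ad_{t^{-\mu}}L^-G$ (the latter because $U_\mu\subseteq G(k)\subseteq L^-G$), giving one inclusion of the main equality for free.

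For the reverse inclusion, take $h\in I(1)$ with $g := \Ad_{t^\mu}h \in L^-G$. The plan is to use the unique Iwahori factorization $h = n^-\cdot t_0 \cdot n$ with $n^-\in U^-(tk[[t]])$, $t_0\in T(k[[t]])$ congruent to $1\pmod{t}$, and $n\in U(k[[t]])$. Since $\Ad_{t^\mu}$ normalizes each of $U^-(F), T(F), U(F)$, the expression $g = \Ad_{t^\mu}n^-\cdot t_0 \cdot\Ad_{t^\mu}n$ exhibits a big-cell factorization of $g$ in $G(F)$. Granting the decoupling step discussed below, the condition $g\in L^-G$ translates to each factor being in $L^-G$; expanding $n^- = \prod_{\alpha<0}x_\alpha(c_\alpha)$, $n = \prod_{\alpha>0}x_\alpha(d_\alpha)$ and applying the conjugation formula requires $t^{\langle\alpha,\mu\rangle}c_\alpha\in k[t^{-1}]$ and $t^{\langle\alpha,\mu\rangle}d_\alpha\in k[t^{-1}]$. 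Combined with the minuscule constraint $\langle\alpha,\mu\rangle\in\{-1,0,1\}$, this forces $c_\alpha\in kt$ for $\alpha\in\Phi(\fu_P^-)$, $d_\alpha\in k$ for $\alpha\in\Phi(\fu_M)$, vanishing of the remaining root coordinates, and $t_0 = 1$. Reassembly yields $h\in U_M(k)\cdot U_P^-(kt) = \exp(\fu_M\oplus\fu_P^-t)$.

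The main obstacle is the decoupling step: a priori, a $k[t^{-1}]$-point of $G$ that is generically in the big cell $U^-TU$ need not factor as a morphism through $U^-TU$, so uniqueness of the big-cell decomposition over $F$ alone does not let one conclude each factor lies in $L^-G$. The cleanest workaround is to run the entire calculation on the Lie algebra side: under the exponential isomorphism $\Lie(I(1))\cong I(1)$ (valid since $p>h$), the closed unipotent subgroup $\Ad_{t^{-\mu}}L^-G\cap I(1)$ has Lie algebra $\Ad_{t^{-\mu}}\Lie(L^-G)\cap \Lie(I(1))$. Decomposing $\Ad_{t^{-\mu}}\Lie(L^-G) = \ft\otimes k[t^{-1}]\oplus \bigoplus_\alpha \fg_\alpha\otimes t^{-\langle\alpha,\mu\rangle}k[t^{-1}]$ and intersecting component-wise with $\Lie(I(1)) = \ft\otimes tk[[t]]\oplus\bigoplus_{\alpha>0}\fg_\alpha\otimes k[[t]]\oplus\bigoplus_{\alpha<0}\fg_\alpha\otimes tk[[t]]$ directly produces $\fu_M\oplus\fu_P^-t$, bypassing the big-cell subtlety entirely.
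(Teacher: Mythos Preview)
Your proof is correct, and your final Lie algebra computation is exactly the paper's argument transposed to the Lie algebra side: the paper simply checks which one-parameter subgroups $U_\alpha(kt^{-m})\subseteq L^-G$ satisfy $\Ad_{t^{-\mu}}U_\alpha(kt^{-m})=U_\alpha(kt^{-m-\langle\alpha,\mu\rangle})\subseteq I(1)$, and the same two-case analysis (positive root forces $m=\langle\alpha,\mu\rangle=0$, negative root forces $m=0$, $\langle\alpha,\mu\rangle=-1$) drops out. Your detour through the big-cell factorization and your explicit identification of the decoupling subtlety are more scrupulous than the paper, which tacitly assumes the intersection is directly spanned by the surviving affine root subgroups. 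Your Lie algebra workaround is the right resolution; just note that it too relies on one unstated fact, namely that a closed subgroup of the pro-unipotent $I(1)$ is recovered as $\exp$ of its Lie algebra---this holds here because $p>h$ makes the Baker--Campbell--Hausdorff correspondence between closed subgroups and Lie subalgebras available, but it is worth saying.
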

\begin{proof}
The second equality is clear from the fact that $\fu_M\subset\fg_\mu(0)$, $\fu_P^-=\fg_\mu(-1)$. For the first equality, if $\Ad_{t^{-\mu}}U_\alpha(kt^{-m})=U_\alpha(kt^{-m-\alpha(\mu)})\subset I(1)$, $m\geq0$. We have
$$
\begin{cases}
\alpha>0,\ -m-\alpha(\mu)\geq0\Rightarrow m=\alpha(\mu)=0,\ \alpha\in\Phi(\fu_M);\\
\alpha<0,\ -m-\alpha(\mu)\geq 1\Rightarrow m=0, \alpha(\mu)=-1,\ \alpha\in\Phi(\fu_P^-).
\end{cases}
$$

This proves the first equality.
\end{proof}

\subsection{Generic opposite Schubert cells on affine flag varieties} The following should be well-known.  We provide a proof here for the convenience of the reader. Recall that we have decomposition
$$
\mathrm{Fl}_G=[LG/I]=\bigsqcup_{\mu\in X_*(T)}L^-G t^\mu I/I
$$
and that $M = \{ \mu \in X_*(T) \mid 0 \le \langle \alpha, \mu \rangle \le 1\, \forall \alpha \in \Phi^+\}$.
\begin{lem}\label{l:minuscule orbits on Fl_G}
	For any orbit $L^-G t^\mu I/I$ where $\mu\in X_*(T)$ we have $\dim\Stab_{L^-G}(t^\mu)\geq\dim B$. The equality holds if and only if $\mu$ is a dominant minuscule coweight. 
\end{lem}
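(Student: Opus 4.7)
The plan is to identify the stabilizer
\[
\Stab_{L^-G}(t^\mu) = L^-G \cap t^\mu I t^{-\mu}
\]
and compute its dimension by a root-by-root analysis. First I would use the standard Iwahori decomposition
\[
\Lie(I) = \ft(k[\![t]\!]) \oplus \bigoplus_{\alpha > 0}\fg_\alpha(k[\![t]\!]) \oplus \bigoplus_{\alpha < 0}\fg_\alpha(tk[\![t]\!]),
\]
together with the observation that $\Ad_{t^\mu}$ rescales the $\alpha$-component by $t^{\langle \alpha,\mu\rangle}$, to describe $\Lie(t^\mu I t^{-\mu})$ explicitly. Intersecting each summand with the corresponding summand of $\Lie(L^-G) = \fg(k[t^{-1}])$ reduces to the elementary computation
\[
\dim_k\bigl(t^n k[\![t]\!] \cap k[t^{-1}]\bigr) = \max(0,\,1-n).
\]

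Summing over all root components and rewriting the contribution from negative roots in terms of their positive counterparts yields
\[
\dim \Stab_{L^-G}(t^\mu) = \dim T + \sum_{\substack{\alpha > 0\\ \langle \alpha,\mu\rangle \le 0}}(1 - \langle \alpha,\mu\rangle) + \sum_{\substack{\alpha > 0\\ \langle \alpha,\mu\rangle \ge 1}} \langle \alpha,\mu\rangle.
\]
Subtracting $\dim B = \dim T + |\Phi^+|$ then gives
\[
\dim \Stab_{L^-G}(t^\mu) - \dim B = \sum_{\substack{\alpha > 0\\ \langle \alpha,\mu\rangle \le 0}}(-\langle \alpha,\mu\rangle) + \sum_{\substack{\alpha > 0\\ \langle \alpha,\mu\rangle \ge 1}} (\langle \alpha,\mu\rangle - 1),
\]
a sum of non-negative integers. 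Hence $\dim \Stab \ge \dim B$, with equality if and only if every summand vanishes, i.e.\ $\langle \alpha,\mu\rangle \in \{0,1\}$ for all $\alpha \in \Phi^+$. This is precisely the defining condition for $\mu$ to be a dominant minuscule coweight.

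The only point that is not a pure bookkeeping exercise is passing from this Lie-algebra count to the dimension of the stabilizer as a group scheme. For this I would invoke the standard Iwahori product decomposition of $t^\mu I t^{-\mu}$ as a product (in a fixed order) of the torus $T(k[\![t]\!])$ and the root subgroups $U_\alpha(t^{\langle \alpha,\mu\rangle}k[\![t]\!])$ for $\alpha > 0$, respectively $U_\alpha(t^{1+\langle \alpha,\mu\rangle}k[\![t]\!])$ for $\alpha < 0$. Intersecting factor-by-factor with the matching decomposition of $L^-G$ exhibits $\Stab_{L^-G}(t^\mu)$ as an iterated affine-space fibration over $T(k)$ whose total dimension equals the Lie-algebra dimension computed above, finishing the proof.
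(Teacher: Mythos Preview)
Your proposal is correct and follows essentially the same approach as the paper: both compute $\dim\Stab_{L^-G}(t^\mu)$ by a root-by-root count of the intersection $\Ad_{t^{-\mu}}L^-G\cap I$, show each positive root contributes at least $1$ to the unipotent part, and characterize equality by $\langle\alpha,\mu\rangle\in\{0,1\}$. The only cosmetic difference is that the paper phrases the count via affine roots and the grading by $\check{\rho}/h$, arriving at quantities $N_\alpha=\#\{r\in\bZ\mid\langle\alpha,\mu\rangle\le r\le(\Ht(\alpha)-1)/h\}$, whereas you go straight to the intersection $t^n k[\![t]\!]\cap k[t^{-1}]$; unwinding the paper's $N_\alpha$ yields exactly your $\max(0,1-\langle\alpha,\mu\rangle)$ and $\max(0,\langle\alpha,\mu\rangle)$, so the two computations coincide.
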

\begin{proof}
First we have
$$
\Stab_{L^-G}(t^\mu)\simeq\Ad_{t^{-\mu}}L^-G\cap I=T(\Ad_{t^{-\mu}}L^-G\cap I(1))
$$
Thus we obtain
\begin{equation}\label{eq:stab Fl_G}
\begin{split}
\dim\Stab_{L^-G}(t^\mu)&=\dim T+ \#\{\tilde{\alpha}=\alpha+n_\alpha\in\Phi^\aff(G)\mid\tilde{\alpha}(\check{\rho}/h)>0,\ \alpha(\mu)+n_\alpha\leq0 \}\\
&=\dim T+\sum_{\alpha\in\Phi(G)}\#\left\{r\in\bZ\mid\alpha(\mu)\leq r\leq\frac{\Ht(\alpha)-1}{h} \right\}.
\end{split}
\end{equation}
Define $N_\alpha:=\#\{r\in\bZ\mid\alpha(\mu)\leq r\leq\frac{\Ht(\alpha)-1}{h} \}$. For any positive root $\alpha$ we consider the following cases
\[
\begin{cases} \langle \alpha, \mu \rangle=0 \\
\langle \alpha, \mu \rangle\geq1 \\
\langle \alpha, \mu \rangle\leq -1 
\end{cases}
\]
separetely. In the first case $\langle \alpha, \mu \rangle\leq 0\leq\frac{\Ht(\alpha)-1}{h}<1$ and thus $N_\alpha= 1$ and by definition $N_{-\alpha}=0$. In the second case $\langle -\alpha, \mu \rangle \leq -1\leq\frac{-\Ht(\alpha)-1}{h}<0$ and therefore $N_{-\alpha}=\langle \alpha, \mu \rangle\geq 1$ and $N_{\alpha}=0$. Finally in the third case $\langle \alpha, \mu \rangle\leq -1\leq 0\leq\frac{\Ht(\alpha)-1}{h}$ and hence $N_\alpha\geq 2, N_{-\alpha}=0$. Thus for each root $\alpha$ the numbers $N_{\alpha}$ and $N_{-\alpha}$ cannot both be non-zero and summing them up shows that 
\[ \sum N_{\alpha} \ge \Phi^{+} = \dim U.
\]
Moreover if the equality is obtained then the third case above cannot appear, i.e. $\langle \alpha, \mu \rangle\geq 0$ for any positive root $\alpha$ and hence $\mu$ is dominant. From the second case above we have $\langle \alpha, \mu \rangle=1$ for any $\langle \alpha, \mu \rangle\geq 1$. Thus $0\leq\langle \alpha, \mu \rangle\leq 1$ for any positive root $\alpha$.
Conversely when $\mu$ is minuscule, the third case does not occur. The sum over all positive roots $\alpha$ of $N_\alpha$ for $\langle \alpha, \mu \rangle=0$ and $N_{-\alpha}$ for $\langle \alpha, \mu \rangle=1$ is already $\dim U$. We have seen before that for a negative root $\alpha<0$ with $\langle \alpha, \mu \rangle=0$ or positive root $\alpha$ with $\langle \alpha, \mu \rangle=1$ we have $N_\alpha=0$. Thus equality is obtained in this case.
\end{proof}

Since the dimension of stabilizers is a semi-continuous function we obtain the following. 
\begin{cor}\label{c:open orbits on Fl_G}
	$L^-G t^\mu I/I\subset\mathrm{Fl}_G$ is open for any minuscule coweight $\mu$.
\end{cor}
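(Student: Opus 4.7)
The plan is to deduce the corollary directly from Lemma \ref{l:minuscule orbits on Fl_G} via upper semi-continuity of stabilizer dimensions.

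First, the Birkhoff decomposition presents $\mathrm{Fl}_G$ as a disjoint union of locally closed orbits $L^-G t^\mu I/I$, and by Lemma \ref{l:minuscule orbits on Fl_G} the function $x \mapsto \dim \Stab_{L^-G}(x)$ is constant on each orbit, attaining the value $\dim B$ precisely on those orbits indexed by dominant minuscule coweights and strictly larger values elsewhere. Granted upper semi-continuity of this function on $\mathrm{Fl}_G$, the locus
\[
U := \{ x \in \mathrm{Fl}_G : \dim \Stab_{L^-G}(x) = \dim B \}
\]
is open; by the lemma it coincides with the union $\bigsqcup_{\mu \in M} L^-G t^\mu I/I$ of all minuscule orbits.

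Second, to pass from openness of $U$ to openness of each individual minuscule orbit I would invoke the bijection $M \cong X_*(T)/\Lambda^\vee \cong \pi_0(\mathrm{Fl}_G)$ discussed in \S\ref{ss:components}. This places distinct minuscule orbits into distinct connected components of $\mathrm{Fl}_G$, and since $L^-G$ is connected each orbit lies entirely in one component. The minuscule orbits are therefore precisely the connected components of $U$, so each is open in $\mathrm{Fl}_G$.

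The main obstacle will be the rigorous justification of upper semi-continuity of the stabilizer dimension in the ind-scheme setting. The standard remedy is to work locally on a finite-dimensional sub-ind-scheme of $\mathrm{Fl}_G$ containing the given point, on which a sufficiently large jet quotient of $L^-G$ acts with the same stabilizer, and then to apply the classical semi-continuity theorem for algebraic group actions on schemes of finite type. Once this reduction is in place the deduction is essentially formal.
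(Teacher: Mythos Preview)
Your proposal is correct and follows essentially the same route as the paper, which simply invokes upper semi-continuity of stabilizer dimension immediately after Lemma~\ref{l:minuscule orbits on Fl_G}. Your additional care in separating the individual minuscule orbits via $\pi_0(\mathrm{Fl}_G)\cong X_*(T)/\Lambda^\vee\cong M$ and in flagging the ind-scheme reduction for semi-continuity is welcome detail that the paper leaves implicit.
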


\begin{bibdiv}
\begin{biblist}
\bib{Ad98}{article}
{
	AUTHOR = {Adler, Jeffrey D.},
     	TITLE = {Refined anisotropic {$K$}-types and supercuspidal
              representations},
   	JOURNAL = {Pacific J. Math.},
  	FJOURNAL = {Pacific Journal of Mathematics},
   	VOLUME = {185},
      	YEAR = {1998},
    	NUMBER = {1},
     	PAGES = {1--32},
      	ISSN = {0030-8730},
       	URL = {https://doi-org.libproxy.mit.edu/10.2140/pjm.1998.185.1}
}

\bib{BoalchStokesTop}{incollection}
{
AUTHOR = {Boalch, P. P.},
     TITLE = {Topology of the {S}tokes phenomenon},
 BOOKTITLE = {Integrability, quantization, and geometry. {I}},
    SERIES = {Proc. Sympos. Pure Math.},
    VOLUME = {103},
     PAGES = {55--100},
 PUBLISHER = {Amer. Math. Soc., Providence, RI},
      YEAR = {2021},
   MRCLASS = {32G34 (14D20 14H15 33C10 34M35 34M40)},
  MRNUMBER = {4285677},
}

\bib{BoalchTwisted}{misc}
{
      author={Boalch, Philip},
      author={Yamakawa, Daisuke},
      year={2015},
      eprint={1512.08091},
      archivePrefix={arXiv},
      primaryClass={math.AG},
      note={\url{https://arxiv.org/abs/1512.08091}}
}

\bib{BlochEsnault}{article}
{
author={Bloch, Spencer},
author={Esnault, H\'{e}l\`ene},
     TITLE = {Local {F}ourier transforms and rigidity for {$\scr
              D$}-modules},
   JOURNAL = {Asian J. Math.},
  FJOURNAL = {Asian Journal of Mathematics},
    VOLUME = {8},
      YEAR = {2004},
    NUMBER = {4},
     PAGES = {587--605},
      ISSN = {1093-6106},
   MRCLASS = {14F10 (11G99)},
  MRNUMBER = {2127940},
MRREVIEWER = {Ricardo Garc\'{\i}a L\'{o}pez},
       URL = {http://projecteuclid.org/euclid.ajm/1118669692},
}

\bib{Bourbaki}{book}
{
    AUTHOR = {Bourbaki, Nicolas},
     TITLE = {Lie groups and {L}ie algebras. {C}hapters 4--6},
    SERIES = {Elements of Mathematics (Berlin)},
      NOTE = {Translated from the 1968 French original by Andrew Pressley},
 PUBLISHER = {Springer-Verlag, Berlin},
      YEAR = {2002},
     PAGES = {xii+300},
      ISBN = {3-540-42650-7},
   MRCLASS = {17-01 (00A05 20E42 20F55 22-01)},
  MRNUMBER = {1890629},
       DOI = {10.1007/978-3-540-89394-3},
       URL = {https://doi.org/10.1007/978-3-540-89394-3},
}

\bib{BH}{article}
{
   AUTHOR = {Bushnell, Colin J.},
   author= {Henniart, Guy},
     TITLE = {The essentially tame local {L}anglands correspondence. {I}},
   JOURNAL = {J. Amer. Math. Soc.},
  FJOURNAL = {Journal of the American Mathematical Society},
    VOLUME = {18},
      YEAR = {2005},
    NUMBER = {3},
     PAGES = {685--710},
      ISSN = {0894-0347},
   MRCLASS = {22E50 (11S37)},
  MRNUMBER = {2138141},
MRREVIEWER = {Dubravka Ban},
       DOI = {10.1090/S0894-0347-05-00487-X},
       URL = {https://doi.org/10.1090/S0894-0347-05-00487-X},
}

\bib{DaxinXinwen}{misc}
{
title={Bessel $F$-isocrystals for reductive groups}, 
author={Zhu, Xinwen},
      author={Xu, Daxin}
      year={2019},
      eprint={1910.13391},
      archivePrefix={arXiv},
      primaryClass={math.AG},
      note={\url{https://arxiv.org/abs/1910.13391}}
}

\bib{Fin21}{misc}
{
      title={On the construction of tame supercuspidal representations}, 
      author={Fintzen, Jessica},
      year={2021},
      eprint={1908.09819},
      archivePrefix={arXiv},
      primaryClass={math.RT},
      url = {arxiv.org/abs/1908.09819v2}
      note = {To appear in Compositio Mathematica. \url{arxiv.org/abs/1908.09819v2}}
}

\bib{FrenkelGross}{article}
{
    AUTHOR = {Frenkel, Edward}
    author = {Gross, Benedict},
     TITLE = {A rigid irregular connection on the projective line},
   JOURNAL = {Ann. of Math. (2)},
  FJOURNAL = {Annals of Mathematics. Second Series},
    VOLUME = {170},
      YEAR = {2009},
    NUMBER = {3},
     PAGES = {1469--1512},
      ISSN = {0003-486X},
   MRCLASS = {14D24 (14F40)},
  MRNUMBER = {2600880},
       DOI = {10.4007/annals.2009.170.1469},
       URL = {https://doi.org/10.4007/annals.2009.170.1469},
}

\bib{Fu10}{article}
   { 
   AUTHOR = {Fu, Lei},
     TITLE = {Calculation of {$\ell$}-adic local {F}ourier transformations},
   JOURNAL = {Manuscripta Math.},
  FJOURNAL = {Manuscripta Mathematica},
    VOLUME = {133},
      YEAR = {2010},
    NUMBER = {3-4},
     PAGES = {409--464},
      ISSN = {0025-2611},
   MRCLASS = {14F20 (11S15 11S20 11T23)},
  MRNUMBER = {2729262},
MRREVIEWER = {Adolfo Quir\'{o}s},
     
       URL = {https://doi-org.libproxy.mit.edu/10.1007/s00229-010-0377-x},
}

\bib{Ga07}{article}
{
    	AUTHOR = {Gaitsgory, Dennis},
     	TITLE = {On de {J}ong's conjecture},
   	JOURNAL = {Israel J. Math.},
    	VOLUME = {157},
      	YEAR = {2007},
     	PAGES = {155--191},
       	URL = {https://doi-org.libproxy.mit.edu/10.1007/s11856-006-0006-2},
}

\bib{GKM06}{article}
{
    AUTHOR = {Goresky, Mark},
    author= {Kottwitz, Robert},
    author={MacPherson, Robert},
     TITLE = {Purity of equivalued affine {S}pringer fibers},
   JOURNAL = {Represent. Theory},
  FJOURNAL = {Representation Theory. An Electronic Journal of the American
              Mathematical Society},
    VOLUME = {10},
      YEAR = {2006},
     PAGES = {130--146},
   MRCLASS = {22E67 (22E35)},
  MRNUMBER = {2209851},
MRREVIEWER = {Jorge A. Vargas},
       DOI = {10.1090/S1088-4165-06-00200-7},
       URL = {https://doi-org.libproxy.mit.edu/10.1090/S1088-4165-06-00200-7},
}
	
\bib{HNY13}{article}
{   	AUTHOR = {Heinloth, Jochen}
	AUTHOR = {Ng\^{o}, Bao-Ch\^{a}u}
	AUTHOR = {Yun, Zhiwei},
     	TITLE = {Kloosterman sheaves for reductive groups},
   	JOURNAL = {Ann. of Math. (2)},
    	VOLUME = {177},
      	YEAR = {2013},
    	NUMBER = {1},
     	PAGES = {241--310},
       	URL = {https://doi-org.libproxy.mit.edu/10.4007/annals.2013.177.1.5},
}
\bib{JakobHohl}{misc}
{
      title={Stokes matrices for Airy equations}, 
      author={Hohl, Andreas},
      author={Jakob, Konstantin},
      year={2021},
      eprint={2103.16497},
      archivePrefix={arXiv},
      primaryClass={math.AG},
      note={To appear in Tohoku Mathematical Journal. \url{https://arxiv.org/abs/2103.16497}}
}

\bib{Ho77}{article}
{
	AUTHOR = {Howe, Roger E.},
     	TITLE = {Tamely ramified supercuspidal representations of {${\rm Gl}_{n}$}},
   	JOURNAL = {Pacific J. Math.},
  	FJOURNAL = {Pacific Journal of Mathematics},
    	VOLUME = {73},
      	YEAR = {1977},
    	NUMBER = {2},
     	PAGES = {437--460},
      	URL = {http://projecteuclid.org.libproxy.mit.edu/euclid.pjm/1102810618}
}

\bib{JY20}{misc}
{
  	author = {Jakob, Konstantin}
	author = {Yun, Zhiwei},
   	eprint = {\url{arxiv.org/abs/2008.04029}},
  	title = {Euphotic representations and rigid automorphic data},
  	year = {2020},
	note = {\url{arxiv.org/abs/2008.04029}}
}

\bib{JantzenLie}{incollection}
{
    AUTHOR = {Jantzen, Jens Carsten},
     TITLE = {Representations of {L}ie algebras in positive characteristic},
 BOOKTITLE = {Representation theory of algebraic groups and quantum groups},
    SERIES = {Adv. Stud. Pure Math.},
    VOLUME = {40},
     PAGES = {175--218},
 PUBLISHER = {Math. Soc. Japan, Tokyo},
      YEAR = {2004},
   MRCLASS = {17B50 (17B10)},
  MRNUMBER = {2074594},
MRREVIEWER = {Iain G. Gordon},
       DOI = {10.2969/aspm/04010175},
       URL = {https://doi.org/10.2969/aspm/04010175},
}

\bib{Masoud}{article}
{
	AUTHOR = {Masoud Kamgarpour},
	TITLE = {Stacky abelianization of algebraic groups},
	JOURNAL = {Transform. Groups},
	FJOURNAL = {Transformation Groups},
	VOLUME = {14},
	YEAR = {2009},
	NUMBER = {4},
	PAGES = {825--846},
}

\bib{KS21}{article}
{
	AUTHOR = {Kamgarpour, Masoud}
	AUTHOR = {Sage, Daniel S.},
     	TITLE = {Rigid connections on {$\Bbb P^1$} via the {B}ruhat-{T}its
              building},
   	JOURNAL = {Proc. Lond. Math. Soc. (3)},
     	FJOURNAL = {Proceedings of the London Mathematical Society. Third Series},
    	VOLUME = {122},
      	YEAR = {2021},
    	NUMBER = {3},
     	PAGES = {359--376}, 
       	URL = {https://doi-org.libproxy.mit.edu/10.1112/plms.12346},
}

\bib{KS22}{misc}
{
author = {Kamgarpour, Masoud},
author = {Sage, Daniel S.},
TITLE = {Differential Galois group of {$G$}-connections},
note = {In preparation}
}

\bib{KY20}{article}
{ 
	author = {Kamgarpour, Masoud}
	author = {Yi, Lingfei},
  	title = {Geometric Langlands for hypergeometric sheaves},
	journal = {Trans. Amer. Math. Soc.},
	Year={2021} 
}

\bib{KXY}{article}
{ 
	author = {Kamgarpour, Masoud}
	author={Xu, Daxin} 
	author = {Yi, Lingfei},
  	title = {Geometric Langlands for hypergeometric sheaves II},
	journal = {In preparation},
}

\bib{Katz87}{article}
{
	AUTHOR = {Katz, Nicholas M.},
	TITLE = {On the monodromy groups attached to certain families of
		exponential sums},
	JOURNAL = {Duke Math. J.},
	FJOURNAL = {Duke Mathematical Journal},
	VOLUME = {54},
	YEAR = {1987},
	NUMBER = {1},
	PAGES = {41--56}
}

\bib{KatzGroups}{article}
{
AUTHOR = {Katz, Nicholas M.},
     TITLE = {On the calculation of some differential {G}alois groups},
   JOURNAL = {Invent. Math.},
  FJOURNAL = {Inventiones Mathematicae},
    VOLUME = {87},
      YEAR = {1987},
    NUMBER = {1},
     PAGES = {13--61},
      ISSN = {0020-9910},
   MRCLASS = {12H05 (14E20 14F20 34B30)},
  MRNUMBER = {862711},
MRREVIEWER = {F. Baldassarri},
       URL = {https://doi.org/10.1007/BF01389152},
}

\bib{KatzRigid}{book}
{
AUTHOR = {Katz, Nicholas M.},
     TITLE = {Rigid local systems},
    SERIES = {Annals of Mathematics Studies},
    VOLUME = {139},
 PUBLISHER = {Princeton University Press, Princeton, NJ},
      YEAR = {1996},
     PAGES = {viii+223},
      ISBN = {0-691-01118-4},
   MRCLASS = {14F20 (14F05)},
  MRNUMBER = {1366651},
MRREVIEWER = {Abdellah Mokrane},
       URL = {https://doi.org/10.1515/9781400882595},
}

\bib{KatzFinite}{article}
{
  AUTHOR = {Katz, Nicholas M.},
     TITLE = {Rigid local systems on {$\Bbb A^1$} with finite monodromy},
      NOTE = {With an appendix by Pham Huu Tiep},
   JOURNAL = {Mathematika},
  FJOURNAL = {Mathematika. A Journal of Pure and Applied Mathematics},
    VOLUME = {64},
      YEAR = {2018},
    NUMBER = {3},
     PAGES = {785--846},
      ISSN = {0025-5793},
   MRCLASS = {11T23 (20B05 20G40 34M35)},
  MRNUMBER = {3867322},
MRREVIEWER = {Antonio Rojas Le\'{o}n},
       URL = {https://doi.org/10.1112/s0025579318000268},
}

\bib{KatzSuzuki}{article}
{
AUTHOR = {Katz, Nicholas M.},
     TITLE = {Exponential sums, {R}ee groups and {S}uzuki groups:
              conjectures},
   JOURNAL = {Exp. Math.},
  FJOURNAL = {Experimental Mathematics},
    VOLUME = {28},
      YEAR = {2019},
    NUMBER = {1},
     PAGES = {49--56},
      ISSN = {1058-6458},
   MRCLASS = {11T23 (20C33)},
  MRNUMBER = {3938577},
MRREVIEWER = {Antonio Rojas Le\'{o}n},
       URL = {https://doi.org/10.1080/10586458.2017.1334246},
}
\bib{KatzHypergeometric}{book}
{
AUTHOR = {Katz, Nicholas M.},
     TITLE = {Exponential sums and differential equations},
    SERIES = {Annals of Mathematics Studies},
    VOLUME = {124},
 PUBLISHER = {Princeton University Press, Princeton, NJ},
      YEAR = {1990},
     PAGES = {xii+430},
      ISBN = {0-691-08598-6; 0-691-08599-4},
   MRCLASS = {14D10 (11L03 11T23 14G15)},
  MRNUMBER = {1081536},
MRREVIEWER = {Hernando Enrique Sierra-Morales},
       URL = {https://doi.org/10.1515/9781400882434},
}

\bib{Levy}{article}
{
AUTHOR = {Levy, Paul},
     TITLE = {Vinberg's {$\theta$}-groups in positive characteristic and
              {K}ostant-{W}eierstrass slices},
   JOURNAL = {Transform. Groups},
  FJOURNAL = {Transformation Groups},
    VOLUME = {14},
      YEAR = {2009},
    NUMBER = {2},
     PAGES = {417--461},
      ISSN = {1083-4362},
   MRCLASS = {17B50 (17B70)},
  MRNUMBER = {2504929},
       URL = {https://doi.org/10.1007/s00031-009-9056-y},
}

\bib{Panyushev}{article}
{
	author = {Panyushev, Dmitri I.},
	title ={On invariant theory of $\theta$-groups},
	journal = {Journal of Algebra},
	volume = {283},
	number = {2},
	pages = {655 - 670},
	year = {2005},
	issn = {0021-8693}
}

\bib{Sobaje}{article}
{
AUTHOR = {Sobaje, Paul},
     TITLE = {Springer isomorphisms in characteristic {$p$}},
   JOURNAL = {Transform. Groups},
  FJOURNAL = {Transformation Groups},
    VOLUME = {20},
      YEAR = {2015},
    NUMBER = {4},
     PAGES = {1141--1153},
      ISSN = {1083-4362},
   MRCLASS = {14L35 (17B45 20G15)},
  MRNUMBER = {3416442},
MRREVIEWER = {Anthony Henderson},
       URL = {https://doi.org/10.1007/s00031-015-9320-2},
}

\bib{Stokes}{article}
{
    AUTHOR = {Stokes, G. G.},
     TITLE = {On the discontinuity of arbitrary constants that appear as
              multipliers of semi-convergent series},
      NOTE = {A letter to the editor},
   JOURNAL = {Acta Math.},
  FJOURNAL = {Acta Mathematica},
    VOLUME = {26},
      YEAR = {1902},
    NUMBER = {1},
     PAGES = {393--397},
      ISSN = {0001-5962},
   MRCLASS = {DML},
  MRNUMBER = {1554970},
       URL = {https://doi.org/10.1007/BF02415503},
}

\bib{Yu01}{article}
{
	AUTHOR = {Yu, Jiu-Kang},
     	TITLE = {Construction of tame supercuspidal representations},
   	JOURNAL = {J. Amer. Math. Soc.},
  	FJOURNAL = {Journal of the American Mathematical Society},
    	VOLUME = {14},
      	YEAR = {2001},
    	NUMBER = {3},
     	PAGES = {579--622},
      	ISSN = {0894-0347},
       	URL = {https://doi-org.libproxy.mit.edu/10.1090/S0894-0347-01-00363-0},
}

\bib{YunMotive}{article}
{
	AUTHOR = {Yun, Zhiwei},
	Title={Motives with exceptional Galois groups and the inverse Galois problem},
	Journal={Invent. Math.}
	Year={2014},
	Volume={196},
	Pages={267--337}
}

\bib{Yun14}{collection.article}
{
    	AUTHOR = {Yun, Zhiwei},
     	TITLE = {Rigidity in automorphic representations and local systems},
 	BOOKTITLE = {Current developments in mathematics 2013},
     	PAGES = {73--168},
 	PUBLISHER = {Int. Press, Somerville, MA},
      	YEAR = {2014},
}

\bib{Yun16}{article}
{
	AUTHOR = {Yun, Zhiwei},
     	TITLE = {Epipelagic representations and rigid local systems},
   	JOURNAL = {Selecta Math. (N.S.)},
    	VOLUME = {22},
      	YEAR = {2016},
    	NUMBER = {3},
     	PAGES = {1195--1243},
       	URL = {https://doi-org.libproxy.mit.edu/10.1007/s00029-015-0204-z},
}

\bib{Zhu17}{article}
{ 
	AUTHOR = {Zhu, Xinwen},
    	TITLE = {Frenkel-{G}ross' irregular connection and
              {H}einloth-{N}g\^{o}-{Y}un's are the same},
   	JOURNAL = {Selecta Math. (N.S.)},
  	FJOURNAL = {Selecta Mathematica. New Series},
    	VOLUME = {23},
      	YEAR = {2017},
   	NUMBER = {1},
     	PAGES = {245--274},
       	URL = {https://doi-org.libproxy.mit.edu/10.1007/s00029-016-0238-x}
}

\end{biblist}
\end{bibdiv} 
\end{document}